\documentclass[12pt,a4paper]{amsart}
\usepackage{verbatim,fancyvrb}
\usepackage{setspace}
\usepackage[left=2.5cm,right=2.5cm,top=3.0cm,bottom=3.0cm,includeheadfoot]{geometry}
\usepackage{latexsym}
\usepackage{amssymb}
\usepackage{amsthm}
\usepackage{amsmath}
\usepackage{tikz}
\usepackage{enumitem}
\usetikzlibrary{matrix}
\usetikzlibrary{positioning}
\usetikzlibrary{arrows}
\usepackage{graphicx}
\usepackage[font=small,labelfont=bf]{caption}
\usepackage[normalem]{ulem}

\usepackage[latin1]{inputenc}
\usepackage{hyperref}
\usepackage[normalem]{ulem}

\usepackage[english]{babel}

\newcommand{\GL}{\mathrm{GL}}

\newcommand{\SL}{\mathrm{SL}}

\newcommand{\mft}{\mathfrak{t}}

\newcommand{\rk}{\mathrm{rk}}

\newcommand{\RR}{\mathbb{R}}
\newcommand{\CC}{\mathbb{C}}
\newcommand{\ZZ}{\mathbb{Z}}
\newcommand{\QQ}{\mathbb{Q}}

\newcommand{\NN}{\mathbb{N}}

\newtheorem{thm}{Theorem}[section]
\newtheorem{prop}[thm]{Proposition}
\newtheorem{cor}[thm]{Corollary}
\newtheorem{lem}[thm]{Lemma}

\theoremstyle{plain}
\newtheorem{defn}[thm]{Definition}

\newtheorem{lemma}[thm]{Lemma}

\newtheorem{maintheorem}{Theorem}

\theoremstyle{definition}
\newtheorem{rem}[thm]{Remark}
\newtheorem{ex}[thm]{Example}

\title[Finiteness and boundedness of positive monotone Hamiltonian GKM$_3$ spaces]{Finiteness and boundedness of positive monotone Hamiltonian GKM$_3$ spaces}

\author[P. Konstantis]{Panagiotis Konstantis}
\address{Fachbereich Mathematik und Informatik, Philipps Universit\"at Marburg,
Hans-Meerwein-Str. 6, 35043 Marburg, Germany}
\email{pako@mathematik.uni-marburg.de}

\author[S. Sabatini]{Silvia Sabatini}
\address{Department Mathematik/Informatik, Universit\"at zu K\"oln,
  Weyertal 86-90, 50931 K\"oln, Germany.}
\email{sabatini@math.uni-koeln.de}

\author[L. Zoller]{Leopold Zoller}
\address{Facultat de Matem\`atiques i Inform\`atica, Gran Via de les Corts Catalanes 585, 08007 Barcelona, Spain}
\email{leopold.zoller@ub.edu}

\date{\today}

\begin{document}

\begin{abstract}

In this paper, we establish three finiteness and boundedness theorems for compact positive monotone symplectic manifolds endowed with special actions, called GKM$_3$, which generalize smooth toric varieties. Specifically, we prove that, for fixed dimension and Euler characteristic, there are only finitely many complex cobordism classes of such spaces. 
Moreover, modulo lattice transformations, the moment map image can be embedded into a box of explicitly bounded size, and all Chern numbers satisfy 
quantitative bounds. In particular, this yields a bound on the volume of the underlying symplectic manifold, analogous to the one obtained by Koll\'{a}r-Miyaoka-Mori for Fano varieties.

 \end{abstract}

\maketitle

\tableofcontents

\section{Introduction}

A compact complex manifold $(M,J)$ is called \textit{Fano} if its anticanonical bundle is ample. This is equivalent to saying that there exists a K\"ahler form $\omega\in \Omega^{1,1}(M)$
such that $c_1=[\omega]$, where $c_1$ denotes the first Chern class of the tangent bundle of $(M,J)$. 

The latter definition motivates the following generalization to symplectic manifolds:
Let $(M,\omega)$ be a compact symplectic manifold and consider an almost complex structure $J$ compatible with $\omega$. 
Since the set of such structures is contractible, the Chern classes $c_j\in H^{2j}(M;\ZZ)$ of the tangent bundle $(TM,J)$ are indeed invariants of $(M,\omega)$.
\begin{defn}
A compact symplectic manifold $(M,\omega)$ is called \textbf{positive monotone} if
\begin{equation}\label{positive monotone}
c_1=[\omega]
\end{equation} 
\end{defn}
Positive monotone symplectic manifolds are therefore a generalization of Fano varieties, and it is natural to ask which similarities these two classes of manifolds share. 
By the celebrated 
Koll\'ar-Miyaoka-Mori theorem \cite[Theorem 0.2]{MR1189503}, there exists a quantitative upper bound for their volume $c_1^n[M]$ in each complex dimension $n\in \NN$. In particular 
Fano varieties of a fixed dimension $n\in \NN$ form a bounded family. This implies that, in each dimension, there are only finitely many diffeomorphism types, as well as finitely many complex cobordism classes. 
We point out that, although a posteriori one knows that their Euler characteristic, as well as the other Chern numbers, are bounded for each $n\in \NN$, to the best of the authors knowledge there are no
quantitative bounds for the latter quantities, except for $c_1^n[M]$. 

In this paper we focus on understanding the analogies between Fano varieties and positive monotone symplectic manifolds endowed with certain special symmetries. 
While the primary motivation for considering the equivariant case is the availability of equivariant techniques, many natural examples of Fano varieties also fall within this framework.\\$\;$

Suppose that a compact torus $T$ acts on $(M,\omega)$ preserving the symplectic form.
Such action is called \textit{Hamiltonian} 
 if there exists a $T$-invariant map $\mu\colon M \to \mathfrak{t}^*$, called \textit{moment map}, such that 
\begin{equation}\label{def moment map}
d\langle \mu, \xi \rangle = - \iota_{\xi^\#} \omega \quad \text{for all  }\xi \in \mathfrak{t},
\end{equation}
where $\xi^\#$ is the vector field associated to $\xi$ and $\langle \cdot , \cdot \rangle$ is the natural pairing between $\mathfrak{t}^*$ and $\mathfrak{t}$. Unless otherwise stated, we assume that the torus action is effective and call the triple $(M,\omega,\mu)$ a \textbf{compact Hamiltonian $T$-space}.

Without additional assumptions on $(M,\omega,\mu)$, it is not reasonable to expect that compact positive monotone Hamiltonian 
$T$-spaces behave exactly like Fano varieties. For instance, thanks to the (counter)example built by Fine and Panov \cite{MR2679581} of a compact positive monotone symplectic manifold that is not simply connected, it is not hard to produce examples of compact positive monotone Hamiltonian 
$T$-spaces that are not simply connected, like Fano varieties are. 

In this paper we consider 
compact positive monotone Hamiltonian $T$-spaces with isolated fixed points (which are, inter alia, automatically simply connected).
The following arise naturally: 

\vspace{0.5cm}
\textbf{Questions}:
\begin{itemize}
\item[(A)] For each $n\in \NN$, are there \emph{finitely many complex cobordism classes} of compact positive monotone Hamiltonian $T$-spaces $(M,\omega,\mu)$ with isolated fixed points and with $\dim(M)=2n$?
\item[(B)] Modulo $\GL(k;\ZZ)$ transformations, can the \emph{image of the moment map} $\mu(M)$ of any compact positive monotone Hamiltonian $T$-space
$(M,\omega,\mu)$ of a fixed dimension be \emph{embedded in a ``box''} in $\mathfrak{t}^*$ of some quantifiable size? 
\item[(C)] If the answer to (A) is affermative, are there \textit{quantitative upper bounds} on their Chern numbers, and therefore on their volume $\int_M \omega^n=c_1^n[M]$? 
\end{itemize}
Question (B) is motivated by 
\emph{smooth toric Fano varieties}, which admit an effective action of a complex torus $(\CC^*)^n$,
where $n$ is the complex dimension of the smooth toric variety. They are indeed positive
monotone Hamiltonian $T$-spaces $(M,\omega,\mu)$, where the compact torus $T$ in
$(\CC^*)^n$ has the maximal dimension possible, namely $\dim(T)=\frac{\dim(M)}{2}=n$. In
this case the image of the moment map $\mu(M)$ is --modulo translations-- a smooth
reflexive polytope (see \cite{MR1269718} or, for a symplectic proof, 
\cite[Prop.\ 1.8]{MR2507748}, \cite[Section 3]{MR2768658} or
\cite[Prop.\ 3.10]{MR3695881}). Therefore, as a consequence of a theorem of Lagarias and Ziegler
\cite{MR1138580}, modulo a $\GL(n;\ZZ)$ transformation, $\mu(M)$ can be embedded in a box (a
hypercube) in $\RR^n$ whose size just depends on $n$.  

Question (C) is inspired by the quantitative upper bound on the volume of a Fano variety found by Koll\'ar-Miyaoka-Mori;
however we are interested in finding bounds for all Chern numbers. 

Initial progress to answer the questions above was made in \cite{ChartonKesslerMonotone},
\cite{CSSMonotone} and
\cite{MR4940256}, where equivariant topological invariants of low dimensional  --as well as tall
complexity one-- compact positive monotone Hamiltonian $T$-spaces are classified. Although
Questions (A), (B) and (C) are not addressed explicitly, their answers are positive and arise as a byproduct 
of the classification results presented therein.
\\$\;$

In this paper we answer Questions (A), (B) and (C) for a class of compact positive monotone Hamiltonian $T$-spaces called GKM$_3$,
which for $n>3$ are more general than smooth toric varieties. 
The GKM$_3$ condition can be stated in the following geometric terms:
For each $h\leq \dim(T)$, consider the $h$-skeleton $\mathcal{S}_h$ of $(M,\omega,\mu)$, namely the union of the orbits of dimension at most $h$. This is a union of compact isotropy submanifolds, and therefore a union of compact Hamiltonian submanifolds $N^h$, where the (quotient) torus acting effectively on each $N^h$ has dimension $h$. A \textbf{compact Hamiltonian GKM$_k$ space}
is a compact Hamiltonian $T$-space, such that the action of the (quotient) torus on each $N^h$ is \emph{toric}, for each $h< k$ (in particular the condition for $h=0$ is equivalent to having discrete fixed points; see Section \ref{GKMk} and in particular Remark \ref{gkmk ham}). 
In this case we refer to $\mathcal{S}_h$ as the \emph{toric $h$-skeleton of} $(M,\omega,\mu)$, for each $h<k$. 

GKM$_2$ spaces were first introduced by Goresky, Kottwitz and MacPherson in their seminal
paper \cite{MR1489894} and are known in the literature simply as GKM spaces. We note that
symplectic toric manifolds, namely compact Hamiltonian $T$-spaces with
$\dim(T)=\dim(M)/2=n$, are GKM$_n$ spaces. 
Many compact positive monotone Hamiltonian $T$-spaces, for instance generalized flag varieties, admit GKM$_k$ actions for some $k\geq 3$, but no toric action.
For instance 
the Grassmannian of complex planes in $\CC^4$ is a GKM$_3$ space with respect to the natural action of a compact torus of dimension 3 (see \cite[Section 5]{MR3695881}).

\subsection{Results} The first result of this paper concerns the finiteness of certain integers that can be associated to a compact (Hamiltonian) GKM space $(M,\omega,\mu)$ and are defined as follows. 
Since the toric 1-skeleton of $(M,\omega,\mu)$ is a union of invariant symplectic spheres $S^2$, the tangent bundle $TM$ splits (equivariantly but not uniquely) over each one of these as a direct sum of complex line bundles $\mathbb{L}_i$, $i=1,\ldots,n$. For each $S^2$, consider the integers $c_1(\mathbb{L}_i)[S^2]$, $i=1,\ldots,n$, given by evaluating the first Chern class of $\mathbb{L}_i$ over $S^2$.  
We refer to these integers as the \textbf{line Chern numbers} of the compact (Hamiltonian) GKM space $(M,\omega,\mu)$ corresponding to a chosen equivariant splitting of $TM$ over each invariant
sphere. 

If $(M,\omega,\mu)$ is a compact Hamiltonian GKM$_3$ space, then the equivariant splittings described above are unique. Using convexity properties of the moment map image of the toric 2-skeleton, we prove

\begin{thm}[see Theorem \ref{thm: mainthm} and Remark \ref{rem open close toric}]\label{thm: mainthmintro}
Let $n,\chi\in \NN$. Then there are only finitely many possible values for the line Chern numbers of a compact 
positive monotone
Hamiltonian GKM$_3$ space $(M,\omega,\mu)$ of dimension $2n$ and with Euler characteristic $\chi$.
\end{thm}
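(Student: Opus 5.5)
The approach is to reduce the statement to a finiteness statement about Delzant polygons, using the toric $2$-skeleton. The GKM graph $\Gamma$ of $(M,\omega,\mu)$ has exactly $\chi$ vertices (the fixed points, since odd cohomology vanishes) and is $n$-valent. First I normalize the moment map using $c_1=[\omega]$. After a translation we may assume $\mu(p)=-\sum_{i}w_i(p)$ for every fixed point $p$, where $w_1(p),\dots,w_n(p)$ are the isotropy weights at $p$. This is the standard fact that $\mu$ and $-c_1^T$ restrict to the same class up to a constant.

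For an edge $e$ from $p$ to $q$ (an invariant sphere $S_e$) with weight $\alpha$ at $p$, the unique equivariant splitting of $TM|_{S_e}$ gives the following.
\begin{itemize}
\item The weights at $q$ are $-\alpha$ and $w_i(p)-a_i\alpha$, where the $a_i$ are the line Chern numbers along $e$; the tangent line has $a=2$.
\item We have $\mu(q)-\mu(p)=c_1[S_e]\,\alpha$ with $c_1[S_e]=\sum_i a_i=\omega[S_e]>0$.
\end{itemize}

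The key structural observation concerns the weights $\alpha\neq\beta$ at $p$ corresponding to the edge $e$ and another edge $f$. By GKM$_3$, these two edges span a $4$-dimensional toric submanifold $N_{e,f}$. Its moment image $\Delta_{e,f}=\mu(N_{e,f})$ is a Delzant polygon whose vertices are fixed points of $M$, so it has at most $\chi$ vertices. In this polygon, the line Chern number $a_\beta$ along $e$ is exactly the integer $b$ in the Delzant relation $v_{\mathrm{prev}}+v_{\mathrm{next}}=-b\,v_e$ at the edge $e$, where the $v$ are primitive edge directions. Since every weight $\beta\neq\alpha$ at $p$ arises this way, every line Chern number is such a polygon invariant. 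Moreover, by the normalization, the lattice length of the edge $e$ in $\Delta_{e,f}$ is $c_1[S_e]$, up to the bounded factor coming from non-primitivity of $\alpha$. That factor is itself controlled because $\alpha$ is primitive on the effective quotient torus acting on $N_{e,f}$.

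It therefore suffices to bound $c_1[S_e]$ for all edges in terms of $n$ and $\chi$. Once this is done, every polygon $\Delta_{e,f}$ has at most $\chi$ vertices and bounded edge lengths, hence bounded lattice perimeter. Up to $\GL(2;\ZZ)$ and translation there are only finitely many lattice polygons of bounded perimeter (bounded perimeter bounds the lattice width, hence the polygon fits in a bounded box after a unimodular change). The integers $b$ are $\GL(2;\ZZ)$-invariants of the polygon, so they take finitely many values. This gives finitely many values for all $a_i$.

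The main obstacle is bounding $c_1[S_e]$. My plan is to combine monotonicity with convexity inside each $2$-face, as follows.
\begin{enumerate}
\item Sum the identity $c_1[S_e]=\sum_i a_i$ around the boundary of a polygon $\Delta$. Use the closing condition $\sum_j \ell_j v_j=0$ together with $\sum_j b_j=12-3k$, where $k\le\chi$ is the number of vertices of $\Delta$. The aim is a linear system in which the edge lengths $\ell_j=c_1[S_j]$ are expressed through the $b$'s of all faces through those edges.
\item Use convexity to show that a very long edge forces a very negative $b$ on a neighbouring edge, i.e. the polygon degenerates like a Hirzebruch trapezoid. Monotonicity, $\ell=\sum a_i\ge1$ along that neighbouring edge, together with the $n$-valence, should prevent this degeneration.
\item As an alternative, choose a generic $\xi$ and use the normalization $\mu^\xi(p)=-\sum\langle w_i(p),\xi\rangle$. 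Combined with perfectness of $\mu^\xi$ (there are $\chi$ critical points with indices in $[0,2n]$), this would bound the total variation of $\mu^\xi$ relative to the minimal weight sizes, and hence each $c_1[S_e]$.
\end{enumerate}
I expect this step, which turns positivity of $c_1$ on spheres into an upper bound via convexity of the $2$-faces, to carry the substance of the proof.
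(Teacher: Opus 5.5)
\paragraph{Verdict.} There is a genuine gap, and it is exactly at the step you flag as the main obstacle: nothing in your proposal bounds $m(e)=c_1[S_e]$. None of your three plans can do so as stated. Each works inside a single $2$-face, whereas the bound is a global topological fact.

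\paragraph{The missing ingredient.} It is Proposition~\ref{magnitudes topological}. For a Hamiltonian GKM space the sum of all magnitudes equals the constant $C(n,\mathbf b)$ of \eqref{definition Cnb}, which depends only on $n$ and the even Betti numbers (a localization computation of $c_1c_{n-1}[M]$, \cite[Thm.~1.5]{MR3695881}). Monotonicity makes every $m(e)=\int_{S_e}\omega$ a positive integer. Hence $0<m(e)<C(n,\mathbf b)\le \chi\,n(n+1)^2/2$, using $b_{2j}\le\chi$.

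\paragraph{Why your local plans cannot replace this.}
\begin{itemize}
\item Plan (2): a long edge does not force a very negative neighbouring $b$. The rectangle $[0,L]\times[0,1]$ is a Delzant polygon with all $b_j=0$ and arbitrarily long edges. Inside $M$, the excess length of such an edge is absorbed by the normal line Chern numbers $c_e(f')$, $f'\notin\{e,f\}$, which the face $\Delta_{e,f}$ does not see.
\item Plan (1): summing around one polygon only gives $\sum_j \ell_j=12-k+\sum_j c_1(\nu_N)[S_{e_j}]$, and the normal term is unconstrained locally.
\item Plan (3): you give no mechanism controlling $m(e)=(\mu^\xi(q)-\mu^\xi(p))/\alpha(e)(\xi)$. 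Perfectness of $\mu^\xi$ only counts critical points.
\end{itemize}

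\paragraph{A smaller problem in the reduction.} Your parenthetical justification is false as stated. Lattice polygons of bounded lattice perimeter are not finitely many up to $\GL(2;\ZZ)$. For example, the triangles with vertices $(0,0),(1,0),(2,b)$, $b$ odd, have three boundary lattice points but area $b/2$. So ``bounded perimeter, hence a bounded box after a unimodular change'' fails in general.

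For Delzant polygons the finiteness does hold. One can repeatedly blow down $(-1)$-edges, each step lengthening the neighbouring edges by $\ell_j$, until reaching $\CC P^2$ or a Hirzebruch trapezoid; there bounded edge lengths bound the twist.

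\paragraph{How the paper finishes instead.} Once magnitudes are bounded, there is a much shorter route. Let $s=t(\nabla_e f)$. Convexity of the toric face forces
$\mu(s)-\mu(p)=m(e)\alpha(e)+m(\nabla_e f)\bigl(\alpha(f)-c_e(f)\alpha(e)\bigr)$
to lie in the cone spanned by $\alpha(e)$ and $\alpha(f)$. Hence $m(e)-m(\nabla_e f)\,c_e(f)\ge 0$, which gives $c_e(f)\le m(e)$. The lower bound then follows from $\sum_{f'}c_e(f')=m(e)$ together with $c_e(e)=2$.
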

In fact we obtain concrete quantitative bounds on the line Chern numbers from above and below (see Corollary \ref{global bound}). It is important to note that this step, on which many subsequent results depend, relies on both the assumptions that the action is Hamiltonian and that it is GKM$_3$.
We provide counterexamples if either condition is weakened in Examples \ref{line chern numbers not Ham} and \ref{ex: long weights}.

Equipped with bounds on the line Chern numbers, we proceed to investigate their quantitative implications for the sizes of the weights and the Chern numbers. 
This analysis is carried out at the level of abstract GKM graphs, independently of the preceding geometric setup.
In particular GKM graphs are neither required to come from Hamiltonian actions nor does the GKM$_3$-condition play a role. To state our main results in this direction we recall the concept of a 
\textit{maximal extension of a GKM graph}, which was first introduced by Kuroki (see the original reference
\cite{MR3943448} as well as Section \ref{subsec: max extensions} for our notation).
Loosely speaking, a maximal extension of a GKM graph is the graph itself endowed with labels in the largest lattice possible, while keeping the original connection
and line Chern numbers fixed.
 We also call GKM graphs (resp. actions) of this type \emph{combinatorially maximal} (see Definitions \ref{def comb maximal} and \ref{def combinatorially maximal}). 
 Geometrically, an extension of this type corresponds to an extension of the acting torus. 
 However, while an action being combinatorially maximal ensures that it cannot be geometrically extended effectively with the given connection, the converse is not known.

\begin{thm}\label{thm: boxintro}
Let $(\Gamma,\alpha,\nabla)$ be an $n$-valent GKM graph with fixed connection $\nabla$, axial function $\alpha$ and $\chi$ vertices, and assume that the absolute value of
all line Chern numbers is bounded above by $\beta>0$.
\begin{enumerate}
\item[(i)]
There exists a constant $K=K(\beta,n,\chi)$, explicitly given in \eqref{def K}, depending only on $\beta$,
$n$, and $\chi$, and a choice of maximal extension $\tilde\alpha\colon E\rightarrow \ZZ^k$
of $(\Gamma,\alpha,\nabla)$, for which 
\[
\| \tilde\alpha(e) \|_\infty\leq K\quad \text{for all  } e\in E\,.
\]
\item[(ii)] There exists a constant $L=L(\beta,n,\chi)$, explicitly given in \eqref{eq def L}, such that for each decomposition $n=i_1+\ldots+i_m$, the corresponding combinatorial Chern number of $(\Gamma,\alpha)$ satisfies
\[|c_{i_1,\ldots,i_m}[\Gamma]|\leq L.\]
\end{enumerate}
\end{thm}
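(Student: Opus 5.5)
Part (i) goes through the connection relation and the structure of a maximal extension; part (ii) then follows from localization.

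For (i), recall the connection relation along an edge $e=(p,q)$. If $\nabla_e$ sends the edges $e_i$ at $p$ to $e_i'$ at $q$, then
\[
\alpha(e_i') = \alpha(e_i) - c_i(e)\,\alpha(e),
\]
where the integers $c_i(e)$ are exactly the line Chern numbers, so $|c_i(e)|\le\beta$. A maximal extension is built by taking free variables for the $n$ weights at a base vertex $p_0$. Then $\tilde\alpha$ is propagated along a spanning tree of $\Gamma$ using the relation above. The lattice is the quotient of the free lattice by the relations imposed by the non-tree edges, namely that the propagated weights must agree.

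Concretely, fix a spanning tree $T$ rooted at $p_0$ with depth at most $\chi-1$. Each weight at a vertex at tree distance $d$ is an integer combination of the $n$ base variables. I will bound the coefficients inductively: if $B_d$ bounds the sup-norm of the coefficient vectors at depth $d$, then $B_{d+1}\le (1+\beta)B_d$. This gives $B_d\le(1+\beta)^d$.

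The main obstacle is the quotient. The closing-up relations for non-tree edges, together with the requirement that $\alpha$ at $q$ be the negative of the weight of the reversed edge, give a sublattice $R\subset\ZZ^n$. The maximal extension is $\ZZ^n/R$ modulo torsion, and I must realize it as some $\ZZ^k$ with small coordinates. The plan is to choose a basis of $R_\QQ\cap\ZZ^n$ by Hermite normal form, or a Siegel-type lemma, applied to the generators of $R$. Those generators have entries bounded by $2(1+\beta)^{\chi}$, and there are at most $n\chi$ of them. Siegel's lemma, or Cramer's rule with Hadamard's bound, then yields a complementary projection $\ZZ^n\to\ZZ^k$ killing $R$ with integer entries bounded by roughly $(n^{1/2}\,2(1+\beta)^\chi)^n$. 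Composing with the coefficient bounds gives an explicit $K(\beta,n,\chi)$.

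For (ii), I use the ABBV localization formula for combinatorial Chern numbers:
\[
c_{i_1,\ldots,i_m}[\Gamma]=\sum_{p}\frac{\prod_j\sigma_{i_j}(\alpha(e_1),\ldots,\alpha(e_n))}{\prod_i\alpha(e_i)},
\]
with the edges $e_i$ taken at $p$. The value is independent of the extension, so I may evaluate with $\tilde\alpha$. Then I specialize to a generic integer vector $\xi\in\ZZ^k$ with $\langle\tilde\alpha(e),\xi\rangle\ne0$ for all $e$. The number of weight directions is at most $n\chi/2$, so $\xi$ can be chosen with entries at most about $n\chi$ by avoiding finitely many hyperplanes. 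Each numerator is then bounded by $\binom{n}{i}^{\,m}(kK\|\xi\|_\infty)^n$, and each denominator is a nonzero integer, hence at least $1$ in absolute value. Summing over the $\chi$ vertices gives $L$.

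The hyperplane avoidance needs $\langle\tilde\alpha(e),\xi\rangle\neq 0$ for all edges. This holds because the weights at a vertex are nonzero, as follows from the GKM pairwise independence.
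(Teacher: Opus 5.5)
Your spanning-tree propagation is sound. Via restriction to $E_{p_0}$ it identifies the paper's lattice $L(\Gamma,\nabla,c_*)$ with the saturated lattice $R^\perp\cap\ZZ^n$, and $\ZZ^n/(R_\QQ\cap\ZZ^n)\cong\operatorname{Hom}(L,\ZZ)$ is indeed the label group of the maximal extension.

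The gap is in the next step. For $\tilde\alpha=\pi\circ a$ to be a maximal extension, the map $\pi\colon\ZZ^n\to\ZZ^k$ must have kernel $R_\QQ\cap\ZZ^n$ and must also be \emph{surjective}. Equivalently, the rows of its matrix must form a $\ZZ$-basis of $R^\perp\cap\ZZ^n$, i.e.\ the components of $\tilde\alpha$ must be a basis of $L$ (Definition~\ref{def combinatorially maximal}). Siegel's lemma in the Bombieri--Vaaler form, and likewise Cramer's rule, only give $k$ linearly independent short integer vectors in $R^\perp$. In general these span a sublattice of index greater than one. In that case $\pi$ has image of finite index in $\ZZ^k$, the components of $\tilde\alpha$ span a proper sublattice of $L$, and the labelled graph you bound is not isomorphic to the maximal extension. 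Running Hermite normal form on generators of $R$ does not fix this, since it gives a basis of $R$, not of its orthogonal complement. Your proposal gives no argument that a bounded surjective ``complementary projection'' exists.

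The missing ingredient is a lattice-reduction step. Your independent vectors bound the last successive minimum $\lambda_k$ of $R^\perp\cap\ZZ^n$, and a Korkine--Zolotarev basis then satisfies $\|b_i\|_2\le\sqrt{(i+3)/4}\,\lambda_i$ (Theorem~\ref{thm: lattice reduction}). This is exactly how the paper passes from Siegel's lemma to an honest basis. The paper does this directly in $\ZZ^E$ with the full relation matrix, whose rows have Euclidean norm at most $\sqrt{\beta^2+2}$, rather than first compressing to $\ZZ^{E_{p_0}}$. With that step inserted your route works. It produces a different explicit constant, roughly $(1+\beta)^{(n-k+1)\chi}$ times factors depending only on $n$, not the $K$ of \eqref{def K}.

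Your part (ii) is correct and in fact sharper than the paper's. Choose $\xi$ in the grid $\{0,\dots,\lfloor n\chi/2\rfloor\}^k$ avoiding the at most $n\chi/2$ hyperplanes $\tilde\alpha(e)^\perp$. This gives $\|\xi\|_\infty\le n\chi/2$, independent of $K$. The paper's test vector $X=(1,K+1,\dots,(K+1)^{k-1})$ instead puts a factor $(K+1)^{n(n-1)}$ into $L$ where yours has $(n\chi/2)^n$. Part (ii) still depends on the bound from (i), so it stands only once (i) is repaired.
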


The proofs of $(i)$ and $(ii)$ are given in Theorems \ref{thm: Box} and Corollary \ref{thm: quantitative bound}.
To understand the role of the maximal extension in $(i)$, we point out that, in general, bounds on the line Chern numbers do not imply any reasonable restrictions on the weights that can occur on a GKM manifold. For instance, the various possible restrictions of an action to subtori can produce weights of arbitrary length, even up to automorphism of the weight lattice (cf. Example \ref{ex: long weights}). 
Hence the passage to a maximal extension is necessary for a bound on the size of the weights to exist. To prove $(i)$, we use, inter alias, methods coming from the geometry of numbers. More precisely we use Siegel's Lemma and its improved version due to Bombieri and Vaaler,
together with the theory of lattice reduction
(in particular Korkine-Zolotarev bases). The essential ingredient for the proof of part $(ii)$ is the bound on the size of the labels obtained in $(i)$. We point out that, in the case where the GKM graph comes from a GKM manifold, the Chern numbers in $(ii)$ agree with the respective Chern numbers of the manifold. So the bounds in $(ii)$ apply immediately to Chern numbers of GKM manifolds.

Returning to the original motivation of studying the above questions (A), (B), and (C) for positive monotone GKM$_3$ manifolds our results are the following.
\begin{maintheorem}[\textbf{Finiteness}, see Theorem \ref{thm: bordism} (1)]\label{thm: finiteness intro}
For every $\chi,n\in \NN$, there are finitely many complex cobordism classes of compact positive monotone symplectic manifolds of dimension $2n$ and Euler characteristic $\chi$ admitting a Hamiltonian GKM$_3$ action. 
\end{maintheorem}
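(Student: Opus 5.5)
The plan is to reduce complex cobordism classes to Chern numbers, and then show that those Chern numbers take only finitely many values.

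By the theorem of Milnor and Novikov, the complex cobordism class of a stably almost complex manifold of dimension $2n$ is determined by its Chern numbers $c_{i_1,\ldots,i_m}[M]$, where $i_1+\ldots+i_m=n$. Here $M$ carries the stably almost complex structure coming from any $\omega$-compatible $J$. Since these structures form a contractible space, the cobordism class is a well-defined invariant of $(M,\omega)$. So it is enough to show that, for fixed $n$ and $\chi$, each Chern number of such an $M$ can take only finitely many integer values. There are only finitely many partitions of $n$, so the number of cobordism classes is then finite.

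The first step is to apply Theorem \ref{thm: mainthmintro}, or more precisely its quantitative form in Corollary \ref{global bound}. A compact positive monotone Hamiltonian GKM$_3$ space of dimension $2n$ with Euler characteristic $\chi$ has all line Chern numbers bounded in absolute value by some $\beta=\beta(n,\chi)$.

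The second step is to pass to the GKM graph $(\Gamma,\alpha,\nabla)$ of the action. It is $n$-valent. Since the fixed points are isolated, the number of vertices equals the number of fixed points, which is $\chi$. The connection $\nabla$ is the canonical one given by the unique equivariant splittings over the invariant spheres, which exist because of the GKM$_3$ condition. Its line Chern numbers are those of $M$, so they are bounded by $\beta$.

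Theorem \ref{thm: boxintro}(ii) now gives $|c_{i_1,\ldots,i_m}[\Gamma]|\le L(\beta,n,\chi)$. We identify the combinatorial Chern numbers with the geometric ones through the ABBV localization formula. The equivariant Chern classes restrict at each fixed point to the elementary symmetric polynomials in the weights, which are the labels $\alpha(e)$ on the edges at that vertex. Localization then expresses $c_{i_1,\ldots,i_m}[M]$ as the graph sum defining $c_{i_1,\ldots,i_m}[\Gamma]$. This sum is independent of the chosen extension of the torus, because Chern numbers are unchanged when we restrict to a subtorus or extend the torus combinatorially. Hence every Chern number is an integer in $[-L,L]$, and there are at most $(2L+1)^{p(n)}$ possible Chern vectors, where $p(n)$ is the number of partitions of $n$.

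The only delicate point is the compatibility of the last step: checking that the combinatorial Chern numbers of a maximal extension agree with the Chern numbers of $M$. This comes down to the observation that the localization sum is a rational function that is in fact constant, and that it is invariant under the linear map of lattices relating the extension to the original labels. The substantive difficulty, namely bounding the line Chern numbers and the weights, has already been handled in the cited results.
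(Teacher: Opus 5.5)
Your argument is correct, but it reaches finiteness of the Chern numbers by a different route from the paper's proof of Theorem~\ref{thm: bordism}(i).

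\textbf{The paper's route.} The argument there is purely qualitative. The line Chern number bounds of Corollary~\ref{global bound} give only finitely many virtual isomorphism classes of GKM graphs (Corollary~\ref{cor: main}). Virtually isomorphic graphs have isomorphic maximal extensions. Lemma~\ref{lem: restrict chern numbers}, applied through the maximal extension, then shows that the degree-zero Chern numbers depend only on the virtual isomorphism class (Proposition~\ref{fin many equiv Chern numbers}). Milnor's theorem finishes the proof.

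\textbf{Your route.} You instead go through the quantitative bound of Theorem~\ref{thm: boxintro}(ii), which is Corollary~\ref{thm: quantitative bound}. In effect you deduce finiteness from Corollary~\ref{cor: quantitativeboundgeometric}, and then count integer vectors in a box. The compatibility point you flag is exactly Lemma~\ref{lem: restrict chern numbers} applied to the map $\Phi$ of Proposition~\ref{prop: maxextension}. That map is the identity on $H^0(BT)\cong\ZZ$, and this step is already built into Corollary~\ref{thm: quantitative bound}.

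\textbf{What each approach buys.} Your approach gives an explicit count, at most $(2L+1)^{p(n)}$ cobordism classes. The cost is that it depends on the heavy machinery behind Theorem~\ref{thm: Box}:
\begin{itemize}
\item the Bombieri--Vaaler form of Siegel's lemma;
\item Korkine--Zolotarev reduction;
\item the rank bound $k\le n$;
\item the generic-vector estimate of Proposition~\ref{prop: chernbound}.
\end{itemize}
The paper's argument needs none of this. It only uses that the maximal extension is determined by $(\Gamma,\nabla,c_*)$ and that Chern numbers are natural under lattice maps. The same finiteness of maximal extensions also gives the equivariant statement~(ii), which the pure counting of integers does not.

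\textbf{Two minor points.} First, Corollary~\ref{global bound} is stated only for $n\ge 3$, so for $n\le 2$ you must fall back on Remark~\ref{rem open close toric}. Second, the $\beta$ fed into Theorem~\ref{thm: boxintro} must also bound $c_e(e)=2$. The bounds of Corollary~\ref{global bound} do ensure this.
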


The strategy of the proof is to use Theorem \ref{thm: mainthmintro} to deduce that there are finitely many possible
\emph{maximal extensions} of the GKM graphs associated to compact positive monotone Hamiltonian GKM$_3$
spaces of a given dimension and Euler characteristic (Corollaries \ref{cor: main} and \ref{virtual iso and max extensions}). 
In order to obtain the theorem, it is then enough to prove that a maximal extension of a GKM graph determines the Chern numbers of the underlying symplectic manifold.

The obvious equivariant version of the above theorem is not directly true, as finiteness of equivariant cobordism classes does not behave well with respect to restricting the action to subgroups (see Example \ref{ex: eqbordism}). If we rule out this phenomenon by imposing that the action is combinatorially maximal,
 we indeed obtain an equivariant analogue of Theorem \ref{thm: finiteness intro} (see Theorem \ref{thm: bordism} (2)).

\vspace{0.3cm}

The next main result of this paper is of quantitative nature and concerns the \emph{diameter of the moment map} $\mu\colon M \to \mathfrak{t}^*\cong \RR^r$, defined as 
\begin{equation}\label{diameter}
d_\mu:=\max\{\|\mu(p)-\mu(q)\|_\infty, \;p,q\in M\}\,.
\end{equation}
\begin{maintheorem}[\textbf{Boundedness of the moment map}, see Corollary \ref{cor: box}]\label{thm bd intro}

Let $(M,\omega,\mu)$ be a compact positive monotone Hamiltonian $\text{GKM}_3$ space of dimension $2n$ and Euler characteristic $\chi$.
Assume that the action is combinatorially maximal and that $\dim(T)=r$.

Then there exists a constant $K'=K'(n,\chi)$, given explicitly in \eqref{bound diameter}, such that, modulo $\GL(r,\ZZ)$ transformations on $\mft^*$,
\begin{equation}\label{diameter intro}
d_\mu \leq K'(n,\chi)\,.
\end{equation}
Moreover the constant $K'$ satisfies 
 \[K'(n,\chi)\leq P(n,\chi)\cdot\left(\frac{ n^4\chi}{2}\right)^{n\chi}\,,\]
 for some polynomial $P(n,\chi)$\,.
\end{maintheorem}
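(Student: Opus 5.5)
The approach is to reduce the diameter bound to the bound on labels of a maximal extension in Theorem \ref{thm: boxintro}(i), using positive monotonicity to turn the moment map along edges into integer multiples of weights.

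\textbf{Step 1: line Chern numbers.} By Theorem \ref{thm: mainthmintro}, more precisely its quantitative form in Corollary \ref{global bound}, the absolute values of all line Chern numbers of a compact positive monotone Hamiltonian GKM$_3$ space of dimension $2n$ and Euler characteristic $\chi$ are bounded by an explicit $\beta=\beta(n,\chi)$.

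\textbf{Step 2: bounded weights.} The action is combinatorially maximal, so its GKM graph $(\Gamma,\alpha,\nabla)$ is its own maximal extension. Maximal extensions are unique up to lattice automorphisms. Hence Theorem \ref{thm: boxintro}(i) gives a $\GL(r,\ZZ)$ transformation after which $\|\alpha(e)\|_\infty\le K(\beta(n,\chi),n,\chi)$ for every edge $e$. Here $\alpha(e)$ is the weight of the isotropy representation in $\mathfrak{t}^*$ with respect to the integral lattice. I would double-check this identification of weights and primitive labels in the combinatorially maximal setting.

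\textbf{Step 3: moment map along an edge.} Let $e$ be an edge from $p$ to $q$ with sphere $S_e$. The standard localization computation gives
\[\mu(q)-\mu(p)=\frac{\int_{S_e}\omega}{1}\,\frac{\alpha(e)}{\|\cdot\|}.\]
Concretely, $\mu(p)-\mu(q)$ equals $\omega[S_e]$ times the primitive weight direction, up to sign and normalization. Positive monotonicity gives $\omega[S_e]=c_1[S_e]=\sum_i c_1(\mathbb{L}_i)[S_e]$, so $|\omega[S_e]|\le n\beta$. The integral-lattice weight $\alpha(e)$ is $w$ times a primitive vector, so the primitive vector has sup norm at most $K$. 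The normalization must be handled carefully: in the usual convention $\mu(p)-\mu(q)=\frac{\omega[S_e]}{\text{(multiplicity)}}\cdot\alpha(e)$, and the multiplicity is a positive integer, so the sup norm is at most $n\beta K$ in all cases.

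\textbf{Step 4: chaining.} The GKM graph is connected and has $\chi$ vertices. Any two fixed points are therefore joined by a path of at most $\chi-1$ edges, giving $\|\mu(p)-\mu(q)\|_\infty\le(\chi-1)n\beta K$ for fixed points $p,q$. By Atiyah--Guillemin--Sternberg convexity, $\mu(M)$ is the convex hull of the images of the fixed points, so the diameter is attained at fixed points. This gives the bound with $K'=(\chi-1)n\beta(n,\chi)K(\beta(n,\chi),n,\chi)$.

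\textbf{Asymptotics.} The asymptotic estimate follows by plugging in the explicit forms from \eqref{def K} and Corollary \ref{global bound}. I expect the dominant factor to be $(n^4\chi/2)^{n\chi}$ from the Bombieri--Vaaler / Korkine--Zolotarev estimate, with the rest polynomial.

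\textbf{Main obstacle.} The hard part is this bookkeeping. The polynomial prefactor $P$ must absorb $\beta$ and the remaining terms of $K$, which requires checking that $\beta$ is polynomial in $n$ and $\chi$ and that no extra exponential factor arises when $\beta$ is inserted into $K$. If it does, the claimed form of $K'$ fails and the bound would need to be restated.
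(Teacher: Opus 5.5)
Your argument follows the paper's proof of Corollary \ref{cor: box} step by step. The steps are: bound the line Chern numbers, take a short basis of $L$ and transport it to $\alpha$ by combinatorial maximality, bound each edge using monotonicity, chain along paths in the graph, and finish with convexity. So the diameter bound itself is sound. Three points need repair, and the second is the only genuinely missing piece.

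First, Step 3 is misnormalized. The weight sum formula $\mu(p)=-c_1^T(p)$ together with Lemma \ref{magnitude and c1} gives directly $\mu(q)-\mu(p)=m(e)\,\alpha(e)$. Here $\alpha(e)$ is the full, possibly non-primitive, weight and $m(e)=\omega[S_e]$. There is no division by a multiplicity and no need to pass to primitive vectors. The GKM labels are the weights themselves, and combinatorial maximality refers only to them, so there is nothing to double-check there. Your bound survives only because you end up using $\|\alpha(e)\|_\infty\le K$ anyway.

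Second, the asymptotic estimate is part of the statement, and you leave it as an expectation that might fail. It does not fail, and the check is one line.
\begin{itemize}
\item The only exponential factor in $K$ is $(\beta^2+2)^{n\chi/2}$.
\item For $n\ge 4$ one has $\beta(n,\chi)=\chi(n^4-n^3-5n^2-3n)/2+2<\chi n^4/2-1$. For $n=3$ one has $\beta=24\chi$.
\item In both cases $\beta^2+2<(n^4\chi/2)^2$, so $K\le\sqrt{(n+3)n\chi/8}\,(n^4\chi/2)^{n\chi}$.
\item The remaining factors $(\chi-1)$, $n\beta$ and the square root are all bounded by polynomials in $n,\chi$.
\end{itemize}

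Third, your constant $(\chi-1)\,n\beta K$ is valid but cruder than the one in \eqref{bound diameter}. The statement refers to that explicit constant. To recover it, make two substitutions:
\begin{itemize}
\item replace $m(e)\le n\beta$ by $m(e)<C(n,\mathbf{b})\le\chi n(n+1)^2/2$, from \eqref{upper bound ms} and \eqref{bound C and chi};
\item replace the path length $\chi-1$ by the graph-diameter bound $3\chi/(n+1)-1$.
\end{itemize}
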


 Just like equivariant cobordism classes, the diameter $d_\mu$ does not behave well with respect to restricting the action to subgroups (see Example \ref{ex: long weights}). This makes the combinatorial maximality
 condition in the theorem necessary. 
The proof is achieved using Theorems \ref{thm: mainthmintro} and \ref{thm: boxintro} $(i)$.
Finally, as a consequence of Theorems \ref{thm: mainthmintro} and \ref{thm: boxintro} $(ii)$ we obtain
\begin{maintheorem}[\textbf{Bounds on the Chern numbers}, see Corollaries \ref{cor: quantitativeboundgeometric} and \ref{bound on the volume}]
Let $(M,\omega,\mu)$ be a compact positive monotone Hamiltonian $\text{GKM}_3$ space of dimension $2n$ and Euler characteristic $\chi$.

Then for each decomposition $n=i_1+\ldots+i_m$, there is a constant $L=L(n,\chi)$ depending only on $n$ and $\chi$, explicitly given in \eqref{eq def L geometric}, such that the corresponding Chern number satisfies
\[|c_{i_1,\ldots,i_m}[M]|\leq L.\]
Moreover there exists a polynomial $P(n,\chi)$ such that $L$ satisfies
\[L \leq P(n,\chi)^{n^3\chi}.\]

\end{maintheorem}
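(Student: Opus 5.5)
The plan is to combine Theorem \ref{thm: mainthmintro}, in its quantitative form (Corollary \ref{global bound}), with Theorem \ref{thm: boxintro} (ii). First, attach to $(M,\omega,\mu)$ its GKM graph $(\Gamma,\alpha,\nabla)$. This graph is $n$-valent, has $\chi$ vertices (the fixed points, since the fixed points are isolated and their number equals the Euler characteristic), and carries the connection induced by the unique equivariant splittings of $TM$ over the invariant spheres; uniqueness holds because the action is GKM$_3$. Corollary \ref{global bound} then gives an explicit $\beta=\beta(n,\chi)$ with $|c_1(\mathbb{L}_i)[S^2]|\le\beta$ for all line Chern numbers. Positive monotonicity enters only here. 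Concretely, $c_1[S^2]=\omega[S^2]>0$ equals the sum of the line Chern numbers, and the convexity of the moment images of the toric $2$-skeleton bounds each summand in terms of $n$ and $\chi$.

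Second, apply Theorem \ref{thm: boxintro} (ii) to $(\Gamma,\alpha,\nabla)$ with this $\beta$. This bounds every combinatorial Chern number by $L(\beta(n,\chi),n,\chi)$, so we set $L(n,\chi)$ in \eqref{eq def L geometric} to be this composition. To pass from graph to manifold, use the ABBV localization formula. The equivariant Chern classes restrict at a fixed point $p$ to the elementary symmetric polynomials in the weights $\alpha(e)$, $e$ an edge at $p$. Hence
\[
c_{i_1,\ldots,i_m}[M]=\sum_{p}\frac{\prod_j \sigma_{i_j}(\alpha_p)}{\prod_{e\ni p}\alpha(e)},
\]
which is by definition the combinatorial Chern number $c_{i_1,\ldots,i_m}[\Gamma]$. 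This quantity is independent of extending the labels to a maximal extension, since the numerator and denominator are compatible with the projection of lattices. This proves the first claim.

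For the growth estimate, substitute the explicit forms of $\beta$, $K$ and $L$. Expect $\beta$ to be polynomial in $n,\chi$. Expect $K(\beta,n,\chi)$ to be of size roughly $(\mathrm{poly}\cdot\beta)^{n\chi}$, coming from Siegel/Bombieri--Vaaler applied to the roughly $n\chi$ linear relations among the labels. Then bound the localization sum crudely. Each numerator has degree $n$ in labels of size at most $K$, so after clearing denominators via an integral generic vector the bound is of order $\chi\cdot\binom{n}{i}\cdots K^{O(n^2)}$. The denominators need care, since evaluating at a generic $\xi$ loses control. Instead I would use the bound from Theorem \ref{thm: boxintro} (ii) directly, whose $L$ presumably has the form $K^{O(n^2)}$ times polynomial factors. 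This gives $L\le (\mathrm{poly})^{n\chi\cdot n^2}=P(n,\chi)^{n^3\chi}$.

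The main obstacle is this bookkeeping: verifying that the exponent of $K$ in $L$ is $O(n^2)$ and that $\beta$ is polynomial. Any larger exponent would break the $n^3\chi$ form. I would first check the explicit formula \eqref{eq def L}, absorbing all factorial and binomial factors into $P$. Finally, as the specialization $c_1^n[M]=\int_M\omega^n$, this yields the volume bound of Corollary \ref{bound on the volume}.
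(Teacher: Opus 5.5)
Your proposal is correct and follows the paper's route. Corollary \ref{global bound} gives $\beta(n,\chi)$, and Theorem \ref{thm: Box} with Proposition \ref{prop: chernbound} (assembled in Corollary \ref{thm: quantitative bound}, with Lemma \ref{lem: restrict chern numbers} transferring the bound from the maximal extension) gives $L=\chi\,(nK(K+1)^{n-1})^n\prod_j\binom{n}{i_j}$; since $K\le P_1(n,\chi)^{n\chi}$ and the exponent of $K$ is exactly $n^2$, this yields $L\le P(n,\chi)^{n^3\chi}$, and the denominator worry you raise is settled there by evaluating at the single vector $X=(1,K+1,\ldots,(K+1)^{k-1})$, which has $|\langle\alpha,X\rangle|\ge 1$ for every weight with $\|\alpha\|_\infty\le K$ and produces the $(K+1)^{n(n-1)}$ factor.
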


Note that in particular we obtain an explicit quantitative bound on \[\int_M \omega^n=c_1^n[M],\]
see Corollary \ref{bound on the volume}.
We also derive some consequences of our results for smooth reflexive polytopes and reflexive GKM graphs, introduced in \cite[Section 5.3]{MR3695881}. 

For instance, as a byproduct of Theorem \ref{thm: finiteness intro}, we prove that for each $n,\chi\in \NN$, there are finitely many reflexive GKM$_3$ graphs with $\chi$
vertices that are $n$-valent and combinatorially maximal (Corollary \ref{cor reflexive graphs}). Moreover
a direct application of Theorem \ref{thm bd intro} is the following
\begin{cor}\label{cor reflexive1}
There exists a polynomial $P(x,y)$ such that for every smooth reflexive polytope $\Delta$ of dimension $n$ with $|V|$ vertices we have

\begin{equation}\label{diameter refl}
d_\Delta:=\max\{\|p-q\|_\infty, \;p,q,\in \Delta\}\leq P(n,|V|)\cdot\left(\frac{n^4|V|}{2}\right)^{n|V|}
\end{equation}

\end{cor}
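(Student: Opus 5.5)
The plan is to realize $\Delta$ as the moment image of a symplectic toric manifold and then apply Theorem \ref{thm bd intro}. Translate $\Delta$ so that its unique interior lattice point is the origin; this is a lattice translation and does not change $d_\Delta$. Since $\Delta$ is smooth, Delzant's construction gives a compact symplectic toric manifold $(M_\Delta,\omega,\mu)$ of dimension $2n$, with the torus $T=T^n$ acting effectively and $\mu(M_\Delta)=\Delta$. Reflexivity means every facet has the form $\{x : \langle x,u_F\rangle = -1\}$, where the $u_F$ are primitive inward normals. Because the Delzant construction gives $[\omega]=\sum_F \lambda_F\,[D_F]$ when the facets are $\langle x,u_F\rangle\ge -\lambda_F$, and $c_1=\sum_F [D_F]$, having all $\lambda_F=1$ gives $c_1=[\omega]$. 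So $M_\Delta$ is positive monotone; this is the direction of the statement recalled in the introduction (cf. \cite[Prop.\ 3.10]{MR3695881}).

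Next I check the hypotheses of Theorem \ref{thm bd intro}.
\begin{enumerate}
\item A symplectic toric manifold of dimension $2n$ is GKM$_n$. In particular it is GKM$_3$ when $n\geq 3$. For $n\le 2$, the GKM$_3$ condition only concerns skeleta $\mathcal{S}_h$ with $h<3$, and all of these are toric, since every orbit-type submanifold of a toric manifold is toric under its quotient torus. So the GKM$_3$ hypothesis holds in all dimensions.
\item The Euler characteristic equals the number of fixed points, which is $|V|$, because fixed points correspond to vertices of $\Delta$.
\item The action is combinatorially maximal. The torus already has the maximal dimension $n=\dim M/2$. The labels at each vertex are the primitive edge vectors of $\Delta$, which form a $\ZZ$-basis of $\ZZ^n$ by smoothness, so they span the full lattice. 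An extension of the GKM graph keeping the connection and the line Chern numbers would therefore be injective on weights and could not enlarge the lattice. Equivalently, the graph is already its own maximal extension in the sense of Definitions \ref{def comb maximal} and \ref{def combinatorially maximal}. This is the step I expect to need the most care: I must check these definitions precisely and confirm that the lattice generated by the weights at a single vertex already being $\ZZ^n$ forces maximality, which I expect to be immediate from the definitions in Section \ref{subsec: max extensions}.
\end{enumerate}

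With all hypotheses in place, Theorem \ref{thm bd intro} (Corollary \ref{cor: box}) gives, modulo a $\GL(n,\ZZ)$ transformation, $d_\mu\le K'(n,|V|)\le P(n,|V|)\left(\frac{n^4|V|}{2}\right)^{n|V|}$, and $d_\mu=d_\Delta$ because $\mu(M_\Delta)=\Delta$. Two details remain.

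First, $d_\Delta$ is not $\GL(n,\ZZ)$-invariant, so the statement should be read, as in Theorem \ref{thm bd intro}, for a suitable representative of $\Delta$ up to unimodular equivalence. Smooth reflexive polytopes are in any case only considered up to that equivalence.

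Second, $P$ must be a polynomial in two variables independent of $\Delta$. This holds because it is taken directly from the explicit constant \eqref{bound diameter}, with $\chi$ replaced by $|V|$.
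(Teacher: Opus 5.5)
Your proposal is correct and is essentially the paper's proof: realize $\Delta$ via \cite[Prop.\ 3.10]{MR3695881} as the moment image of a positive monotone symplectic toric manifold, observe that the toric action is GKM$_3$ and combinatorially maximal with $\chi=|V|$, and apply Corollary \ref{cor: box} together with Remark \ref{rem better bound}. For the maximality step you can simply cite Remark \ref{rem: combmax}(2); the ingredient your sketch leaves implicit is the injectivity of $L(\Gamma,\nabla,c_*)\to\ZZ^{E_p}$, which holds because the compatibility relations propagate values along the edges of the connected graph. Your caveat that the bound holds only modulo $\GL(n,\ZZ)$ is correct, and the paper builds it in implicitly through Corollary \ref{cor: box}.
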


The bounds on the line Chern numbers exhibited in Theorem \ref{thm: mainthm} can be translated into the boundedness of some integers associated
to the two dimensional faces of a smooth reflexive polytope (see Corollaries \ref{cor reflexive} and \ref{global bound reflexive}). This, in turn, can be applied to estimate
the reflexive dimension of a smooth polytope (see Remark \ref{reflexive dimension}).

\subsection{Open questions}
We conclude the introduction with a list of open questions, that we believe are interesting for the communities of symplectic and algebraic geometers interested
in positive monotone Hamiltonian $T$-spaces and Fano varieties. 
\\$\;$\\
\textbf{Questions}:
\vspace{0.3cm}
\begin{itemize}
\item[(D)] Is there an \emph{upper bound} for the \emph{Euler characteristic} of a positive monotone Hamiltonian $T$-space with isolated fixed points in each dimension? 
\item[(E)] Are there \emph{finitely many (equivariant) homotopy/homeomorphism/diffeomorphism/\\symplectomorphism types} of compact positive monotone Hamiltonian $T$-spaces with isolated fixed points in each dimension?
\item[(F)] Is a compact positive monotone Hamiltonian $T$-space with isolated fixed points always (equivariantly) homotopy equivalent/homeomorphic/diffeomorphic/\\symplectomorphic to a Fano variety (endowed with a known action)?
\end{itemize}

In the above questions we restricted our attention to the case in which the fixed points of the action are isolated.  
This condition could be relaxed and replaced, for instance, by the assumption that $\pi_1(M)$ is trivial.

As for upper bounds on the Betti numbers, and therefore for the Euler characteristic, 
to the best of our knowledge the first bounds (depending on the index $k_0$ of
$(M,\omega)$) can be found in \cite[Section 5]{MR3695881} (see also \cite[Section
4]{MR4940256}). 

Questions (E) and (F) were answered in the positive in \cite{ChartonKesslerMonotone} for positive monotone Hamiltonian GKM spaces of dimension 6. In \cite{CSSMonotone} the authors classify 
positive monotone tall complexity one spaces of any dimension, answering in particular Questions (D), (E) and (F) in the affirmative. 
Question (F) is a generalization of the Fine--Panov conjecture (see 
\cite[Conjecture 1.4]{MR3355122}), which is stated for Hamiltonian $S^1$-spaces of dimension 6. It is not reasonable to believe that the answer to Question (F) is always affirmative,
but it would be interesting to know under which additional assumptions it holds true.

\vspace{0.5cm}

One fact underlying many classification results
is that the Chern number $c_1c_{n-1}[M]$ only depends on the Hirzebruch genus of $(M,\omega,\mu)$ which, in turn, only depends on the Betti numbers of $M$.
This is the key fact proving \cite[Corollary 3.1]{MR3230015}.  The inspiration of this result came from a result of Libgober and Wood \cite[Theorem 3]{MR1064869}, who prove that for a complex manifold $X$, $c_1c_{n-1}[X]$ only depends on the Hirzebruch genus which, in this case, only depends on the Hodge numbers of $X$.

For certain types of Hamiltonian actions (namely those admitting a \emph{toric one skeleton}, see \cite[Sections 3 and 4]{MR3695881}), there exists a special set of embedded symplectic spheres whose Poincar\'e dual is the Chern class $c_{n-1}$. 
If the Hamiltonian space admits a toric one skeleton and is positive monotone, the sum of the symplectic volumes of these spheres (also called \emph{magnitudes}) is therefore a topological invariant, more precisely it only depends on the even Betti numbers (see \cite[Theorem 1.5]{MR3695881}). 
This result is the first step for proving the finiteness of the line Chern numbers which, as explained before, proves the finiteness of the number of complex cobordism classes
in Theorem \ref{thm: finiteness intro}.
Moreover, it gives bounds and restrictions on the possible Betti numbers that can arise, which strongly depend on the index $k_0$ of the symplectic manifold (see \cite[Section 5.1]{MR3695881} and \cite[Section 4]{MR4940256}).
\vspace{0.5cm}

We wonder if this strategy could be applied to the \textbf{classification of Fano varieties}. \\$\;$\\
\textbf{Question}:
\vspace{0.3cm}
\begin{itemize}
\item[(G)] Are there special Fano varieties that admit a \emph{chain of rational curves}, such that the Poincar\'e dual to it is the Chern class $c_{n-1}$?
\end{itemize}
\vspace{0.3cm}
If the answer to this question is affirmative, the Chern number $c_1c_{n-1}[X]$ would be the sum of the volumes $\int_{S}\omega=c_1[S]$ of these rational curves $S$ and, therefore, it would be a positive integer
divisible by the index of the Fano variety. This, in turn, would give inequalities in the Hodge numbers depending on the index, in the spirit of those obtained in \cite[Section 5.1]{MR3695881}. 

Continuing the analogy with our strategy, assume that such a chain of rational curves exists and, on each of those, consider the line bundles in which the tangent bundle of $X$ splits.
The integrals of the first Chern classes of such line bundles on the rational curves give rise to integers, which in our case correspond to the line Chern numbers. We use the same name for 
the Fano case.
\\$\;$\\
\textbf{Question}:
\vspace{0.3cm}
\begin{itemize}
\item[(H)] Suppose that the Fano variety admits a chain of rational curves as in Question (G). Do the
corresponding line Chern numbers determine the complex cobordism class of the Fano variety? 
Are they bounded? 
\end{itemize}
$\;$\\

\noindent\textbf{Acknowledgments} 
The first author was supported by the Deutsche Forschungsgemeinschaft under the project
number 452427095.
The second author thanks Alessio Corti for the kind
hospitality at Imperial College and for helpful discussions regarding Questions (G) and
(H). The third author wants to thank Alberto Espuny D\'iaz for helpful discussions.
 The second and third authors were partially supported by SFB/TRR 191 grant 
 ``Symplectic Structures in Geometry, Algebra and Dynamics'' funded by
the Deutsche Forschungsgemeinschaft.

\section{Preliminaries}
\subsection{GKM theory}\label{subsection GKM theory}
In their seminal work \cite{MR1489894}, Goresky, Kottwitz and MacPherson introduced 
a certain remarkable class of torus actions on manifolds, defined as follows. 

\begin{defn}\label{D: GKM manifold}
Let $T=T^{r}$ denote a compact torus of rank $r\geq 2$ and $M$ a compact, connected and orientable manifold of
dimension $2n$. If $T$ acts on $M$ effectively such that
\begin{enumerate}[label=(\alph*)]
	\item there is an almost complex structure $J$ on $M$ invariant under $T$, 
	\item $H^{\mathrm{odd}}(M;\mathbb{Z}) = 0$
	\item the set of fixed points
		\[
			M^{T} := \{p \in M \colon  T \cdot p = \{p\}\} 
		\]
		is finite and
	\item the \emph{1-skeleton} $$\mathcal{S}_{1} := \{ p \in M \colon \dim(T \cdot p)
		\leq 1\}$$ is a finite union of $T$-invariant $2$-spheres
\end{enumerate}
then $(M,J,T)$ is called a \textbf{GKM space} and the action of $T$ on $M$ is called a
\emph{GKM action}. 
\end{defn}
\begin{rem}
For a condition equivalent to (d) see Section \ref{GKMk}.
\end{rem}

\begin{defn}\label{def Ham GKM}
Let $(M,\omega,\mu)$ be a compact Hamiltonian $T$-space, where the set of fixed points $M^T$ is finite. We say that 
$(M,\omega,\mu)$ is a \textbf{compact Hamiltonian GKM space} if condition (d) above holds. 
\end{defn}

\begin{rem}\label{rem: contractible set of acs}
We note that the
space of almost complex structures compatible with a given symplectic form is non-empty and
contractible (see  \cite[Proposition 4.1.1]{MR3674984}). 
Moreover, there always exists a compatible
almost complex structure which is invariant under the $T$-action, see \cite[Lemma
5.5.6]{MR3674984}. Also observe that standard Morse theoretical arguments imply that
condition (b) holds for a compact Hamiltonian $T$-space with isolated fixed points.
It follows that a compact Hamiltonian GKM space is also a GKM space as in Definition \ref{D: GKM manifold}.
\end{rem}

At a fixed point $p \in M^{T}$ we have an isotropy representation of $T$ on the complex
vector space $T_{p}M$ and thus also a representation of the Lie algebra $\mathfrak t$ of
$T$ by taking derivatives. This complex representation decomposes the tangent space into
complex $1$-dimensional subspaces
\begin{equation}\label{V alpha}
	T_{p}M = \bigoplus_{\alpha} V_{\alpha}
\end{equation}
where the corresponding $\alpha \in \mathfrak t^{\ast}$ are called the \emph{weights} of the isotropy representation of $T$ at $p$
and
\[
	V_{\alpha} = \{ v \in V : X * v = \mathrm{i}\alpha(X) \cdot v, \quad X \in \mathfrak
	t\}.
\]
The \emph{weight lattice} of $T$ is defined to be
$\mathbb{Z}_{\mathfrak t} = \ker \exp$, where $\exp\colon \mathfrak{t}\to T$ is the exponential map, and $\alpha \in
\operatorname{Hom}(\mathbb{Z}_{\mathfrak t}, \mathbb{Z}) = \mathbb{Z}_{\mathfrak
t}^{\ast}$.

A GKM space defines naturally a combinatorial object, the \emph{GKM graph}.
We recall the necessary facts here and refer the reader to
\cite{MR1823050} for a detailed exposition. Condition (d) in Definition \ref{D: GKM
manifold} asserts that $\mathcal{S}_1$ is a union of invariant spheres, therefore
the quotient $\mathcal{S}_1/T$ is homeomorphic to a graph, where the vertices are the fixed points
$M^{T}$
 and two vertices are connected by an edge if the two corresponding fixed points lie on the same
invariant $2$-sphere. Thus
geometrically, every (undirected) edge corresponds to one invariant sphere. However, in abstract GKM theory 
it is often assumed that to each sphere with fixed points $p$ and $q$, one associates 
two directed edges, namely $e=(p,q)$ and $\overline{e}=(q,p)$, both representing geometrically the same sphere. For $e=(p,q)$ we say that 
$p$ is the \emph{initial point} of $e$ and we denote it by $i(e)$, and $q$ is the \emph{terminal point}, and denote it by $t(e)$. 
Let $E$ be the set of directed edges of the GKM graph $\Gamma$ and 
$E_p$ be the set of directed edges with initial point given by the vertex $p$.

Each directed edge carries a weight $\alpha\in \mathbb{Z}_{\mathfrak
t}^{\ast}$, namely for every directed $e=(p,q)$ we associate the weight $\alpha$ of the isotropy representation of
$T$ on $T_pS_e^2$, where $S_e^2$ is the sphere associated to the (undirected) edge $e$.
The \emph{axial function} is the function
that assigns to each directed edge $e$ the weight defined above. With abuse of notation we denote such a function
by $\alpha\colon E \to \mathbb{Z}_{\mathfrak
t}^{\ast}$. 

The axial function satisfies some important ``compatibility properties'': For each $e\in E$, consider the corresponding sphere $S^2_e$.
The restriction of the tangent bundle $TM|_{S^2_e}$ splits
equivariantly into a sum of equivariant line bundles $\mathbb{L}_1,\ldots,\mathbb{L}_n$.
We also observe that, for each $i=1,\ldots,n$, $\mathbb{L}_i|_p$ (resp.\ $\mathbb{L}_i|_q$) corresponds to the tangent space of a sphere $S^2_f$,
for some $f\in E_p$ (resp.\ for some $\widetilde{f}\in E_q$). 
We obtain a bijection $\nabla_e\colon E_p \to E_q$ sending $f$ to
$\widetilde{f}$ which we refer to as a \emph{connection} along $e=(p,q)\in E$.

We note that, for every $e\in E$
the following properties hold:
\begin{itemize}
\item[(a)] $\nabla_e(e)=\overline{e}$, where $e=(p,q)$ and $\overline{e}=(q,p)$,
\item[(b)] $(\nabla_e)^{-1}=\nabla_{\overline{e}}$, 
\item[(c)] $\alpha(\overline{e})=-\alpha(e)$ and
\item[(d)] 
\begin{equation}\label{def ce(f)}
			\alpha(f) - \alpha(\nabla_{e}f) = c_{e}(f) \alpha(e). 
		\end{equation}
		for some integer $c_e(f)$, which we refer to as \emph{line Chern number}.
\end{itemize}
While properties (a), (b) and (c) come straightforward from the definition of connection and axial function as well as the GKM condition, property (d) comes from observing
that the subgroup $$K_e:=\exp(\{\xi\in \mathfrak{t}\mid \alpha(\xi)\in \ZZ\})$$ acts trivially on $S^2_e$, and therefore the isotropy
representations of $K_e$ on $\mathbb{L}_i|_p$ (corresponding to the edge $f$) and $\mathbb{L}_i|_q$ (corresponding
to the edge $\widetilde{f}=\nabla_e(f)$) should be the same. Condition \eqref{def ce(f)} is called \emph{compatibility} of the connection $\nabla$ with the
axial function $\alpha$. 

These notions lead to the definition of an \emph{integral abstract GKM graph}.

\begin{defn}\label{D: GKM graph}
Suppose $\Gamma$ is an $n$-valent graph without loops. We denote by $V$ the set of
vertices and by $E$ the set of directed edges of $\Gamma$, which must satisfy the following condition: a directed edge $e=(p,q)\in V\times V$
is in $E$ if and only if $\overline{e}:=(q,p)$ is.
Let $i(e)$ be the initial vertex of $e$ and $t(e)$ its terminal
vertex. For $p \in V$ let $E_{p}$ denote the subset of $E$ with $i(e)=p$. 
A \emph{connection} $\nabla$ on $\Gamma$ is a family of bijections
\[
	\nabla_{e} \colon E_{i(e)} \longrightarrow E_{t(e)}
\]
such that, for all $e \in E$, properties (a) and (b) above hold.

An \textbf{integral abstract GKM graph} is a pair $(\Gamma, \alpha)$ where $\Gamma$ is a
graph as above and $\alpha \colon E \to \mathbb{Z}^{r}$ a map, called \emph{(integral abstract) axial function}, 
with the following properties
\begin{itemize}
	\item[(i)] For every $p \in V$ and every two edges $e,f \in E_{p}$ with $e \neq f$,
		the elements $\alpha(e)$ and $\alpha(f)$ are linearly independent.
	\item[(ii)] $\alpha(\overline{e})=-\alpha(e)$ for all $e \in E$.
	\item[(iii)] There exists a connection $\nabla$ which is \emph{compatible} with $\alpha$, namely
	condition (d) above holds for every $e\in E$. 
		We call the number $c_{e}(f)$ the \emph{abstract line Chern number}.
\end{itemize}
\end{defn}
We denote the integral abstract GKM graph by the triple $(\Gamma,\alpha,\nabla)$ if the choice of
connection is relevant for our purposes and $(\Gamma,\alpha)$ if not. 

Note that in the literature \emph{rational abstract GKM graphs} have also been defined, where in this case the axial function 
takes values
in $\mathbb{Q}^r$, 
satisfies (i) and (ii) above, and there exists a compatible connection satisfying (d), where however the corresponding 
$c_e(f)$ are rational numbers. 
In this article we work solely with integer abstract GKM graphs, henceforth simply
called \textbf{abstract GKM graphs}. 

\begin{rem}\label{R: Existence of GKM graph}
It is clear that every GKM graph coming from a GKM space $(M,J,T)$ is an abstract GKM graph. 
Indeed, if we fix an identification of $\mathbb{Z}_{\mathfrak{t}}^*$
with $\ZZ^r$, then conditions (i), (ii) and (iii) follow
from the discussion before Definition \ref{D: GKM graph}.

We also note that, for a Hamiltonian GKM space $(M,\omega, \mu)$, the GKM graph $(\Gamma, \alpha)$
can be seen in $\mathfrak{t}^{\ast}$ using the moment map 
$\mu$: Each sphere $S^2_e$ is mapped to $\mu(S^2_e)$, for every $e\in E$.
 Observe that in this case, since each $S^2_e$ is a symplectic submanifold of $(M,\omega)$ with a (non effective) Hamiltonian action of $T$, the weight $\alpha(e)$
and the vector $\mu(i(e))-\mu(t(e))$ have the same slope.
\end{rem}

\begin{rem}\label{close open}
Suppose that $(\Gamma, \alpha)$ is the GKM graph of a Hamiltonian GKM space $(M,\omega, \mu)$,
where $\mu$ is injective on the fixed point set $M^T$. Then, using Remark \ref{R: Existence of GKM graph} 
and seeing the GKM graph as a subset of $\mathfrak{t}^*$, we can give the line Chern
numbers associated to a connection $\nabla$ the following
geometric interpretation: 
By \eqref{def ce(f)} the line Chern numbers give a measure of whether the image through $\mu$ of
the edges $f$ and $\nabla_e(f)$ are parallel ($c_e(f)=0$), whether they ``close up'' ($c_e(f)>0$) or they ``open up'' ($c_e(f)<0$).
\end{rem}
\begin{figure}[h]
	\centering
	\includegraphics{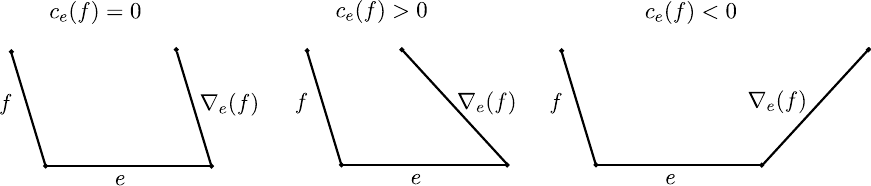}
	\label{line chern numbers}
\end{figure}

\begin{rem}\label{line chern triples}
Given an abstract GKM graph $(\Gamma,\alpha,\nabla)$, by the properties of the axial function and its compatibility with the connection, it can be easily seen that
\begin{align}
\label{alpha e e} c_e(e)=& c_{\overline{e}}(\overline{e})=2\,\quad\text{for all  }e\in
E,\;\;\text{and}\\
\label{eq cefs} c_e(f)=& c_{\overline{e}}(\nabla_e(f))\,, \quad\text{for all  }f\in
E_{i(e)}, \;e\in E\,.
\end{align}
By \eqref{eq cefs}, the integer $c_e(f)$ should be thought of as being associated to the
\emph{unoriented} edge $e$ together with the \emph{pair} $\{f,\nabla_e(f)\}$. In Lemma \ref{geom mean line numbers} we give indeed
a geometric interpretation of these integers which explains this combinatorial fact.
\end{rem}

Let $(M,J,T)$ be a GKM space.
We note that the equivariant splitting of $TM|_{S^2_e}$ into equivariant line bundles is in
general not unique. Therefore there may be different connections associated to different splittings, which
 are compatible with the same axial function $\alpha$,
but have different line Chern numbers (see Example \ref{ex: weird cube}). 

On the other hand it is reasonable to ask whether, given a GKM space with GKM graph $(\Gamma,\alpha)$, an edge $e\in E$, and an abstract 
bijection $\nabla_e\colon E_{i(e)}\to E_{t(e)}$ compatible with $\alpha$, 
there exists a splitting of $TM|_{S^2_e}$ inducing the bijection $\nabla_e$. This is
answered in the following lemma, which is not needed for the proofs of
our main results but is relevant in the context of the combinatoric counterexamples given
in this paper.

\begin{lem}\label{geom and comb connection}
Let $(M,J,T)$ be a GKM space of dimension $2n$ with GKM graph $(\Gamma,\alpha)$. Assume that, for $e\in E$, there exists a bijection $\widetilde{\nabla}_e\colon E_{i(e)}\to E_{t(e)}$ compatible with
the given axial function $\alpha$. 
Then there exists a splitting of $TM|_{S^2_e}$ such that
the induced (geometric) connection $\nabla_e$ along $e$ is exactly $\widetilde{\nabla}_e$.
\end{lem}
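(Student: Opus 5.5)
The plan is to work with the equivariant vector bundle $E:=TM|_{S^2_e}$ over the invariant sphere $S=S^2_e$, where the torus acts through the quotient $T/K_e'$ ($K_e'$ the kernel of the action on $S$). Write $p=i(e)$, $q=t(e)$. At $p$ the fiber splits into weight spaces $V_{\alpha(f)}$, $f\in E_p$, and at $q$ into $V_{\alpha(g)}$, $g\in E_q$; by the GKM condition (i) these weights are pairwise linearly independent, so each weight space is one-dimensional. The goal is to exhibit an equivariant splitting $E=\bigoplus_{f\in E_p}\mathbb{L}_f$ with $\mathbb{L}_f|_p=V_{\alpha(f)}$ and $\mathbb{L}_f|_q=V_{\alpha(\widetilde\nabla_e f)}$.

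First I would reduce to the structure of equivariant complex bundles over $S^2$. Let $H\subset T$ be the codimension-one subtorus $\ker(\alpha(e))$ (identity component), which fixes $S$ pointwise. Then $E$ decomposes $H$-equivariantly into $H$-isotypic subbundles $E=\bigoplus_\lambda E_\lambda$, indexed by characters $\lambda$ of $H$; these are $T$-invariant since $T$ is abelian. A weight $\alpha(f)$ at $p$ and a weight $\alpha(g)$ at $q$ lie in the same isotypic piece exactly when they restrict to the same character of $H$, i.e.\ (up to the finite-group subtleties encoded by $K_e$) when $\alpha(f)-\alpha(g)\in \ZZ\alpha(e)$. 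Compatibility of $\widetilde\nabla_e$ says precisely that $f$ and $\widetilde\nabla_e(f)$ lie in the same piece. Hence it suffices to treat each $E_\lambda$ separately: there $\widetilde\nabla_e$ restricts to an arbitrary bijection between the weights at $p$ and at $q$ appearing in $E_\lambda$.

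Second, on a single piece $E_\lambda$, twist by the character so that $H$ (or the appropriate subgroup) acts trivially; then $E_\lambda$ becomes an $S^1$-equivariant bundle over $S^2$ with the rotation action, whose fiber weights at the poles are integers $a_1,\dots,a_k$ at $p$ and $b_1,\dots,b_k$ at $q$, all distinct within each fiber. I would use the classification of $S^1$-equivariant bundles over $S^2$ (equivalently, clutching with an equivariant transition function, or Grothendieck-type splitting for equivariant bundles over $\mathbb{CP}^1$): such a bundle splits equivariantly into line bundles, and an equivariant line bundle with pole weights $a,b$ has degree determined by $(a-b)/\alpha(e)$. The key step is to show that for \emph{any} bijection $\sigma$ pairing the $a_i$ with the $b_j$, there is a splitting realizing that pairing. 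The rank and the total first Chern class of $E_\lambda$ match $\sum (a_i-b_{\sigma(i)})$ independently of $\sigma$, so the degree data is consistent; I would then construct the splitting by changing the equivariant frame, e.g.\ showing that two equivariant bundles $\mathbb{L}_{a_1,b_1}\oplus\mathbb{L}_{a_2,b_2}$ and $\mathbb{L}_{a_1,b_2}\oplus\mathbb{L}_{a_2,b_1}$ are equivariantly isomorphic (same pole representations, same total degree) and reducing a general permutation to transpositions.

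The main obstacle is exactly this rank-two swap: equivariant bundles over $S^2$ are classified by the pole representations plus a clutching class in $\pi_1$ of the centralizer, and I would verify that with distinct weights the centralizer is a torus $(S^1)^2$ whose $\pi_1$ is detected by the individual line degrees, so one must check that the equivariant clutching data of both sums agree; if it fails in the smooth category I would fall back on the holomorphic $\mathbb{C}^*$-equivariant picture on $\mathbb{CP}^1$, where an explicit $\mathbb{C}^*$-equivariant automorphism mixing the two summands via a monomial section of degree $|c|$ realizes the swap.
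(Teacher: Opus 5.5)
Your reduction is fine in outline: the isotypic splitting of $TM|_{S^2_e}$ under the group fixing $S^2_e$, the observation that compatibility of $\widetilde\nabla_e$ says exactly that $f$ and $\widetilde\nabla_e(f)$ lie in the same piece, the twist to a rotation action, and the reduction to swaps. One caveat: use the full kernel $K_e$ of the character $\alpha(e)$, not its identity component. Only then do two weights share a piece iff they differ by an \emph{integral} multiple of $\alpha(e)$, and only then does the quotient circle act effectively on $S^2_e$, so that the twisted pole weights are integers.

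\textbf{The gap.} It sits at the step you yourself call the main obstacle, and neither of your two routes through it works.
\begin{itemize}
\item The classification you describe, with a clutching class in $\pi_1$ of the centralizer $(S^1)^2$ detected by the individual line degrees, would \emph{forbid} the swap. The bundles $\mathbb{L}_{a_1,b_1}\oplus\mathbb{L}_{a_2,b_2}$ and $\mathbb{L}_{a_1,b_2}\oplus\mathbb{L}_{a_2,b_1}$ have line degrees $(a_1-b_1,a_2-b_2)$ and $(a_1-b_2,a_2-b_1)$, which differ because $b_1\neq b_2$.
\item The holomorphic fallback fails for the same reason. By Grothendieck's theorem the multiset of degrees is an invariant of a holomorphic bundle on $\mathbb{CP}^1$, and $\{a_1-b_1,a_2-b_2\}\neq\{a_1-b_2,a_2-b_1\}$ whenever $a_1\neq a_2$ and $b_1\neq b_2$.
\item Moreover, any equivariant automorphism preserves the weight lines at both poles. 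So ``mixing the summands'' by an automorphism can never change which pole lines are joined.
\end{itemize}
The lemma is intrinsically a smooth statement: individual line degrees are not invariants of the equivariant bundle, which is exactly the point of Example \ref{ex: weird cube}.

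\textbf{The missing idea.} After twisting, $T/K_e\cong S^1$ acts \emph{freely} on the equator $C$ of $S^2_e$. Choose equivariant trivializations of $E_\lambda$ over the closed hemispheres $D_p$ and $D_q$, with pole representations $\rho_p$ on $V_p$ and $\rho_q$ on $V_q$. An equivariant clutching map $g\colon C\to\mathrm{Iso}(V_p,V_q)$ must satisfy $g(t\cdot z)=\rho_q(t)\,g(z)\,\rho_p(t)^{-1}$. Hence $g$ is determined by $g(z_0)$, and $g(z_0)$ may be \emph{any} element of $\mathrm{Iso}(V_p,V_q)\cong\GL(k,\CC)$. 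This space is connected, so an equivariant bundle over $S^2_e$ is determined up to equivariant isomorphism by its two pole representations alone; there is no $\pi_1$-term.

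Consequently $E_\lambda\cong\bigoplus_i\mathbb{L}_{a_i,b_{\sigma(i)}}$ for every bijection $\sigma$, directly, with no transpositions needed. The isomorphism is equivariant at the poles, where the weights are multiplicity free, so it carries the summands to line subbundles with exactly the prescribed fibers at $p$ and $q$.

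\textbf{Comparison with the paper.} With this repair your argument is a valid global alternative to the paper's proof. The paper instead starts from an existing splitting and writes the change of connection as a product of transpositions. It realizes each transposition locally on an invariant annulus in $D_p$: the map $(z,h,v)\mapsto(z,h,z^k v)$ identifies $V_i$ with $V_j$ equivariantly over the free part, and a path in $\GL(2,\CC)$ from the identity to the swap interpolates between the two pairings. This is the same connectedness of $\GL$ over a free orbit that underlies the corrected classification.
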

We may summarize this lemma by saying that on the GKM graph of a GKM space, ``each combinatorial connection compatible
with the axial function is a geometric connection coming from a splitting''. 

\begin{proof}
Let $e$ be an edge in the GKM graph $(\Gamma,\alpha)$ with $p:=i(e)$, $q:=t(e)$, and with
$\gamma=\alpha(e)$. Let $S^2_e$ be the $2$-sphere associated to $e$. Then, as described
above, there is an equivariant splitting $TM|_{S_e^2}=\bigoplus \mathbb L_i$ into complex line
bundles which induces the ``geometric connection'' $\nabla_ef_i=\widetilde{f}_i$, where $f_i\in E_{i(e)}$
represents $(\mathbb L_i)_p$ and $\widetilde{f}_i$ represents $(\mathbb L_i)_q$. Moreover, if
the weight of the $T$ isotropy representation on $(\mathbb L_i)_p$ is $\alpha_{i,p}$
 and that on $(\mathbb L_i)_q$ is $\alpha_{i,q}$, we have that $\alpha_{i,p}$ and $\alpha_{i,q}$
 agree up to integral multiples of $\gamma$, for all $i=1,\ldots,n$.

Now assume that also $\alpha_{i,p}$ and $\alpha_{j,q}$
agree up to integral multiples of $\gamma$ for some $1\leq i\neq j\leq n$. This in fact
holds pairwise for all four of  $\alpha_{i,p},\alpha_{j,p},\alpha_{i,q},\alpha_{j,q}$ and
combinatorially we can modify $\nabla_e$ by sending $f_i$ to $\widetilde{f}_j$ and $f_j$
to $\widetilde{f}_i$; this new connection is still compatible with the given axial function $\alpha$ for the assumption
made above.
It is clear that every connection compatible with $\alpha$ arises from
$\nabla$ by iterating operations of this type. Hence, it remains only to prove that in this
case the modified connection is geometrically justified, i.e. we need to find 
line bundles $\mathbb K$, $\mathbb K'$ over $S^2_e$
such that 
$\mathbb L_i\oplus \mathbb L_j = \mathbb K\oplus \mathbb K'$ where $\mathbb
K_p=(\mathbb L_i)_p$, $\mathbb K_q=(\mathbb L_j)_q$ and $\mathbb K_p'=(\mathbb L_j)_p$,
$\mathbb K_q'=(\mathbb L_i)_q$.

Let $V_k=(\mathbb L_k)_p$, $k=i,j$, be the respective irreducible summands of $T_p M$ and
let $D_p\subset S^2_e$ be the hemisphere containing $p$. As $D_p$ deformation retracts
equivariantly onto $p$ we see that $\mathbb L_k|_{D_p}\cong D_p\times V_k$ with the diagonal
action. We restrict to a $T$-invariant annulus $S^1\times [0,1]\cong A\subset D_p$ where
$T$ acts on the $S^1$ component through the action defined by $\gamma$. It suffices to
construct a splitting $A\times (V_i\oplus V_j)=\mathbb K\oplus \mathbb K'$ into
$T$-equivariant line bundles (over the annulus) such that the decomposition agrees with the decomposition
$V_i\oplus V_j$ over points in $\partial A$ but the factors are swapped along a path
between the connected components of $\partial A$.

By assumption, the weights of $V_i,V_j$ satisfy $\alpha_{i,p} = \alpha_{j,p}+k\gamma$ for some $k\in \ZZ$. Then the map
\begin{align*}
S^1\times [0,1]\times V_i&\rightarrow S^1\times [0,1]\times V_j\\
(z,h,v)&\mapsto (z,h, z^{k}\cdot v)
\end{align*}
(where multiplication in the last component is just complex multiplication) is a
$T$-equivariant isomorphism of complex $T$-vector bundles over $A$. In particular $A\times
(V_i\oplus V_j)\cong A\times (V_i\oplus V_i)$ and it suffices to construct $T$-equivariant
splittings of the latter. Since the action on $V_i\oplus V_i$ is just through diagonal
complex multiplication, every (nonequivariant) splitting of $V_i\oplus V_i=W\oplus W'$
into complex lines automatically extends to an equivariant splitting $S^1\times
\{h\}\times (W\oplus W')$ over every $T$-invariant circle in $A$. Now let $\eta$ be a path
in $\GL(2;\CC)$ from the identity to the automorphism which swaps the standard basis. Then
$\eta(t)$ gives a parametrized family of splittings of $\CC^2\cong V_i\oplus V_i$. Using
the $[0,1]$ component of $A=S^1\times [0,1]$ as parameter for $\eta$, we obtain the
desired equivariant splitting of $A\times (V_i\oplus V_i)$.

\end{proof}

We conclude this subsection with the notion of isomorphism of GKM graphs

\begin{defn}\label{D: Iso of GKM graphs}
Suppose $(\Gamma, \alpha)$ and $(\Gamma', \alpha')$ are two GKM graphs with axial
functions mapping into $\mathbb{Z}^{r}$. An \emph{isomorphism}
$\Phi \colon (\Gamma, \alpha) \to (\Gamma', \alpha')$ consists of an isomorphism of graphs
$\Phi \colon \Gamma \to \Gamma'$ together with an automorphism $\Psi \in \mathrm{GL}(r,
\mathbb{Z})$, such 
\[
	\alpha(\Phi(e)) = \Psi(\alpha(e)) \quad \text{for all } e\in E\,.
\]
\end{defn}

\subsection{Equivariant cohomology and equivariant graph cohomology}
A remarkable feature of GKM spaces is that the GKM graph encodes the equivariant
cohomology of the torus action. Let $G$ be a topological group, $EG \to BG$ the associated universal principal bundle
and suppose that $G$ acts continuously on a topological space $X$. Then
the \emph{Borel construction} is defined by
\[
	X_{G} := EG \times_{G} X
\]
and the \emph{equivariant cohomology} of $X$ is set to be
\[
	H_{G}^{\ast}(X;\Lambda) := H^{\ast}(X_{G};\Lambda) 
\]
where $\Lambda$ is a coefficient ring with unit and we take singular cohomlogy on the
right hand side. Now, suppose $G$ is a torus $T$ and $X$ is a manifold $M$. Then, since $T
\cong S^{1} \times \ldots S^{1}$ and $BS^{1} \cong \mathbb C\mathbb P^{\infty}$, we have
\[
	BT \cong \mathbb C\mathbb P^{\infty} \times \ldots \times \mathbb C\mathbb P^{\infty}.
\]
Recall that $H^{\ast}(\mathbb C\mathbb P^{\infty}; \Lambda)$ is the polynomial ring
$\Lambda[x]$ in one variable (where $x \in H^{2}(\mathbb C\mathbb P^{\infty}; \Lambda)
\cong \Lambda$ is a generator), hence $H^{\ast}(BT; \Lambda) \cong \Lambda[x_{1}, \ldots,
x_{r}]$ ($r = \mathrm{rank}\, T$). The projection $\pi\colon M_{T} \to BT$ is a fiber bundle with fiber
$M$ and $\pi^*$ gives $H^{\ast}_{T}(M;\Lambda)$ the structure of an $H^{\ast}(BT;\Lambda)$
module. If $p \in M$ is a fixed point, we have an equivariant embedding $\iota \colon
\{p\} \to M$ and therefore an induced map $\iota^{\ast} \colon H^{\ast}_{T}(M;\Lambda) \to
H^{\ast}_{T}(\{p\};\Lambda) \cong H^{\ast}(BT;\Lambda)$. If $x \in
H_{T}^{\ast}(M;\Lambda)$ we denote $\iota^{\ast}(x)$ by $x(p)$.

Suppose $E \to M$ is a $T$-equivariant complex vector bundle over a manifold $M$.
Applying the Borel construction on the total space and on the base, one obtains the complex vector
bundle $E_{T} \to M_{T}$. The $i$-th \emph{equivariant Chern class $c_{i}^{T}(E)$ of $E$},
is defined to be the ordinary $i$-th Chern class of $E_{T} \to M_{T}$, thus $c_{i}^{T}(E) :=
c_{i}(E_{T}) \in H^{\ast}(M_{T};\mathbb{Z}) = H_{T}^{\ast}(M;\mathbb{Z})$.

\begin{rem}\label{R: weights and cohomology}
For a GKM space $(M,J,T)$ the weights $\alpha$ are elements of $\mathbb{Z}_{\mathfrak
t}^{\ast}$. Such elements determine a linear form in $\mathfrak t^{\ast}$ which in turn
determines a \emph{character} $\lambda \in \operatorname{Hom}(T, S^{1})$. The induced map
on the level of classifying spaces $B \lambda \colon BT \to BS^{1} = \mathbb C\mathbb
P^{\infty}$ can be used to pull back the universal line bundle $\mathbb{L} \to BS^{1}$ to a line
bundle $\mathbb{L}_{\alpha} \to BT$. The map
\[
	\mathbb{Z}_{\mathfrak t}^{\ast} \to H^{2}(BT;\mathbb{Z}), \quad \alpha \mapsto
	c_{1}(\mathbb{L}_{\alpha})
\]
is an isomorphism. In that way we identify weights
with elements in $H^{2}(BT;\mathbb{Z})$. 
\end{rem}

Now we define the so called \emph{equivariant graph cohomology}
of an abstract GKM graph which, under certain assumptions, agrees with the
equivariant cohomology of the manifold, in the case in which the GKM graph
comes from a GKM manifold. 
 
\begin{defn}\label{D: graph cohomology}
Let $T$ be a torus of rank $r$ and choose an identification of $T$ with $S^{1} \times
\ldots \times S^{1}$, which leads to an identification of $\mathbb{Z}_{\mathfrak t}^{\ast}$ with $\mathbb{Z}^{r}$. Let $(\Gamma, \alpha)$ be an abstract GKM graph with axial function
$\alpha \colon E \to \mathbb{Z}^{r}$.  Using Remark \ref{R: weights and
cohomology} we can consider the axial function to be a map $\alpha \colon E \to
H^{2}(BT;\mathbb{Z})$.

For a coefficient ring $\Lambda$ with unit we set
the \emph{equivariant graph cohomology} of $(\Gamma, \alpha)$ to be
\[
	\mathcal{H}_{T}^{\ast}(\Gamma, \alpha; \Lambda) := \left\{ f\in \text{Maps}\colon V \to 
	H^{\ast}(BT;\Lambda) \, | \, f(i(e)) - f(t(e)) \equiv 0 \mod \alpha(e),\, \forall e \in
E \right\} 
\]
The compatibility condition $f(i(e)) - f(t(e)) \equiv 0 \mod \alpha(e)$ in the definition means that there exists $g_e\in H^{\ast}(BT;\Lambda)$,
such that $f(i(e)) - f(t(e))=g_e\cdot \alpha(e)$

The algebra $\mathcal{H}_{T}^{\ast}(\Gamma, \alpha; \Lambda)$ inherits a grading from
$H^*(BT;\Lambda)$ where an element $f \in \mathcal{H}_{T}^{\ast}(\Gamma, \alpha; \Lambda)$
has degree $2i$ if $f(v)$ is a homogeneous polynomial of degree $i$
in $H^{\ast}(BT; \Lambda) \cong \Lambda[x_{1},\ldots,x_{r}]$ for every $v \in V$ such that
$f(v) \neq 0$.
\end{defn}

\begin{rem}
If the abstract GKM graph $(\Gamma,\alpha)$ comes from a GKM space $(M,J,T)$, then,
by the Chang-Skjelbred Lemma \cite{MR375357}, $H_T^*(M;\QQ)\cong \mathcal{H}_{T}^{\ast}(\Gamma, \alpha; \QQ)$. 
Moreover, under certain restrictions on the disconnected isotropy groups, one has
$\mathcal{H}_T^*(\Gamma,\alpha;\ZZ) \cong H_T^*(M;\ZZ)$ (see \cite{MR2308029}). Without these
conditions the right hand side is in general only isomorphic to a subalgebra of the left
hand side (see \cite{zoller2024integralchangskjelbredcomputationsdisconnected} for concrete formulas). 
\end{rem}

The following Lemma justifies the name ``line Chern numbers" for the integers 
$c_e(f)$ and gives a geometric explanation of Remark \ref{line chern triples}.

\begin{lemma}\label{geom mean line numbers}
With the notation of Section \ref{subsection GKM theory}, let $c_1(\mathbb{L}_i)$ be the first equivariant Chern class of the equivariant line $\mathbb{L}_i$ over $S^2_e$, corresponding to the edge $f_i$, for every $i=1,\ldots,n$. 
Then 
\begin{equation}\label{equality meaning line number}
c_e(f_i)=c_1(\mathbb{L}_i)[S^2_e]\quad \text{for every }i=1,\ldots,n\,.
\end{equation}
\end{lemma}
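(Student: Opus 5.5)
We prove \eqref{equality meaning line number} by computing the left-hand side through equivariant localization on the sphere $S^2_e$. Fix $e=(p,q)$ with $\gamma=\alpha(e)$. Let $\mathbb{L}_i$ be the $T$-equivariant line bundle over $S^2_e$ with $(\mathbb L_i)_p$ corresponding to $f_i\in E_p$ and $(\mathbb L_i)_q$ corresponding to $\nabla_e(f_i)\in E_q$. Regard $c_1^T(\mathbb L_i)\in H^2_T(S^2_e;\ZZ)$, and recall that the ordinary Chern number $c_1(\mathbb L_i)[S^2_e]$ equals the equivariant pushforward $\int_{S^2_e} c_1^T(\mathbb L_i)$. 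This pushforward lies in $H^0(BT)=\ZZ$, since the fibre has real dimension $2$ and the class has degree $2$.

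\textbf{Step 1: restrictions to the fixed points.} Restricting $c_1^T(\mathbb L_i)$ to the fixed points gives the weights of the isotropy representations on the fibres. By Remark \ref{R: weights and cohomology},
\[
c_1^T(\mathbb L_i)(p)=\alpha(f_i),\qquad c_1^T(\mathbb L_i)(q)=\alpha(\nabla_e f_i).
\]

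\textbf{Step 2: Atiyah--Bott--Berline--Vergne localization.} The $T$-action on $S^2_e$ has exactly two fixed points, $p$ and $q$. The tangent weights there are $\gamma$ and $\alpha(\overline e)=-\gamma$, and the orientation is the one induced by $J$ (equivalently $\omega$). Localization gives
\[
\int_{S^2_e} c_1^T(\mathbb L_i)=\frac{\alpha(f_i)}{\gamma}+\frac{\alpha(\nabla_e f_i)}{-\gamma}=\frac{\alpha(f_i)-\alpha(\nabla_e f_i)}{\gamma}.
\]
By the compatibility \eqref{def ce(f)}, this equals $c_e(f_i)$, which proves the lemma. As a sanity check, take $f_i=e$: then $\mathbb L_i=TS^2_e$, and we recover $c_e(e)=2=\chi(S^2)$, consistent with \eqref{alpha e e}.

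\textbf{Main obstacle.} The delicate point is justifying localization with $\ZZ$-coefficients, or equivalently with $\QQ$-coefficients, since the answer is an integer anyway. One must also handle the possibly non-effective action on $S^2_e$ and fix the sign conventions so that the tangent weight at $p$ is $+\gamma$ with respect to the orientation of $S^2_e$.

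To avoid these issues, I would reduce to a circle. Choose $\xi$ in the integral lattice with $\gamma(\xi)\neq 0$, and restrict to the circle it generates. Then $S^2_e$ is a rotation of $\CC P^1$ of speed $\gamma(\xi)$, and $\mathbb L_i$ is determined up to equivariant isomorphism by its two fibre weights. The degree can then be computed directly by clutching: the bundle is trivialized equivariantly over the two hemispheres $D_p$ and $D_q$, as in the proof of Lemma \ref{geom and comb connection}. The transition map on the equator is $z\mapsto z^{k}$, where $\alpha(f_i)-\alpha(\nabla_e f_i)=k\gamma$, and this gives degree $k=c_e(f_i)$ directly. Either route suffices; I would present the localization argument and mention the clutching computation as confirmation of the sign.
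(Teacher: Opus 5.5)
Your proposal is correct and follows the paper's own proof. The paper likewise identifies the restrictions of $c_1^T(\mathbb{L}_i)$ at $p$ and $q$ with $\alpha(f_i)$ and $\alpha(\nabla_e(f_i))$, and applies ABBV localization on $S^2_e$ to get $\alpha(e)\cdot c_1(\mathbb{L}_i)[S^2_e]=\alpha(f_i)-\alpha(\nabla_e(f_i))$, then concludes from the compatibility relation defining $c_e(f_i)$. Your extra remarks on coefficients, orientation and the clutching cross-check are harmless but not needed: the localization identity already holds rationally, and both sides are integers.
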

\begin{proof}
The proof is simple and it involves the Atiyah-Bott-Berline-Vergne localization formula
(in short ABBV formula), see \cite{MR721448, MR685019}. 
We first observe that 
$c_1(\mathbb{L}_i)|_p=\alpha(f_i)$ and $c_1(\mathbb{L}_i)|_q=\alpha(\nabla_e(f_i))$. From the ABBV formula it follows that
$$
\alpha(e)\cdot c_1(\mathbb{L}_i)[S^2_e]=c_1(\mathbb{L}_i)|_p-c_1(\mathbb{L}_i)|_q= \alpha(f_i)-\alpha(\nabla_e(f_i))\,,
$$
and the claim follows from the definition of $c_e(f_i)$ in \eqref{def ce(f)}.
\end{proof}

We observe that, if the GKM graph is $n$-valent, for every edge $e$ there are exactly $n$ line Chern numbers associated to it, each of them
being an integer (see Lemma \ref{geom mean line numbers}). Their sum is another important invariant of the GKM graph. More precisely,
we define the \emph{magnitude} of $e\in E$ as
\begin{equation}\label{def magnitude}
m(e)=\sum_{f\in E_{i(e)}} c_e(f)\,.
\end{equation}
\begin{rem}
Since a connection $\nabla$ gives a bijection between $E_{i(e)}$ and $E_{t(e)}$, from \eqref{eq cefs} it is immediate to see that 
\begin{equation}\label{sym ms}
m(e)=m(\overline{e})\,.
\end{equation}
Furthermore one has
\begin{equation}\label{eq magn}
	m(e)=\frac{\sum_{f\in E_{i(e)}}\alpha(f)-\sum_{\widetilde{f}\in E_{t(e)}}\alpha(\widetilde{f})}{\alpha(e)}
\end{equation}
and we infer that, unlike the line Chern numbers, the magnitude does not depend on the choice of a connection.
\end{rem}
If the abstract GKM graph comes from a GKM space $(M,J,T)$, then the magnitude has the following important geometric meaning.
\begin{lemma}\label{magnitude and c1}
Let $(M,J,T)$ be a GKM space with associated GKM graph $(\Gamma,\alpha)$. Let $c_1$ be the first Chern class of $TM$ and $c_1^T$ its equivariant 
first Chern class. Then 
$$
m(e)=c_1[S^2_e]\quad\text{and}\quad c^T_1(p)-c^T_1(q)=m(e)\alpha(e)
$$  
where $p=i(e)$, $q=t(e)$.
\end{lemma}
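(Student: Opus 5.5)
The plan is to prove the two identities in reverse order: first the equivariant one, then deduce $m(e)=c_1[S^2_e]$ from it by the same localization argument as in Lemma \ref{geom mean line numbers}.

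For the second identity, I will evaluate $c_1^T$ at the fixed points. At a fixed point $p$, the tangent space $T_pM$ splits as in \eqref{V alpha} into the weight spaces $V_{\alpha(f)}$, $f\in E_p$. By the Whitney sum formula and Remark \ref{R: weights and cohomology}, this gives
\[
c_1^T(p)=\sum_{f\in E_p}\alpha(f)\in H^2(BT;\ZZ).
\]
The analogous formula holds at $q$. Subtracting and using \eqref{eq magn}, which is independent of the chosen connection, gives
\[
c_1^T(p)-c_1^T(q)=\sum_{f\in E_p}\alpha(f)-\sum_{\tilde f\in E_q}\alpha(\tilde f)=m(e)\alpha(e).
\]
Alternatively, one can write this difference as $\sum_f\bigl(\alpha(f)-\alpha(\nabla_e f)\bigr)$ and apply \eqref{def ce(f)} directly.

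For the first identity, I choose an equivariant splitting $TM|_{S^2_e}=\bigoplus_i\mathbb L_i$. Then
\[
c_1(TM)|_{S^2_e}=\sum_i c_1(\mathbb L_i),
\]
and this holds equivariantly as well. Lemma \ref{geom mean line numbers} gives $c_1(\mathbb L_i)[S^2_e]=c_e(f_i)$. Summing over $i$ yields $c_1[S^2_e]=\sum_i c_e(f_i)=m(e)$.

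As a consistency check, one can instead apply the ABBV formula to $c_1^T|_{S^2_e}$ on the sphere, whose only fixed points are $p$ and $q$ with tangent weights $\alpha(e)$ and $-\alpha(e)$. This gives
\[
c_1[S^2_e]=\frac{c_1^T(p)-c_1^T(q)}{\alpha(e)}=m(e),
\]
using the identity already proved.

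There is no real obstacle in this argument. The only points needing care are the sign and orientation conventions in the ABBV formula on $S^2_e$, which must be the same ones used in Lemma \ref{geom mean line numbers}. One must also note that the equivariant Chern class of the restricted tangent bundle equals the restriction of $c_1^T(TM)$, which holds by naturality of the Borel construction.
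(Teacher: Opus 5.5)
Your proposal is correct and follows essentially the same route as the paper. The paper's proof also computes $c_1^T(p)=\sum_{f\in E_p}\alpha(f)$ and applies \eqref{eq magn} to get the equivariant identity. It then sums Lemma~\ref{geom mean line numbers} over an equivariant splitting of $TM|_{S^2_e}$ to get $c_1[S^2_e]=m(e)$.

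The ABBV consistency check you add is a harmless extra. One point you leave implicit: the $c_e(f_i)$ in Lemma~\ref{geom mean line numbers} are taken with respect to the connection induced by the chosen splitting. This causes no problem, because \eqref{eq magn} shows that $m(e)$ does not depend on the connection.
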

\begin{proof}
Observe first that $c_1^T(p)=\displaystyle\sum_{f\in E_p}\alpha(f)$. Then the claims
follow easily from Equation \eqref{eq magn}, the fact that over $S^2_e$ the tangent bundle $TM$ splits equivariantly into the line bundles $\mathbb{L}_i$, and Lemma \ref{geom mean line numbers}\,.
\end{proof}

\subsection{GKM$_k$ spaces}\label{GKMk}
Condition (d) in Definition \ref{D: GKM manifold} is often stated equivalently in the following terms.
Let $M$ and $T$ be as in Definition \ref{D: GKM manifold}, and assume that conditions (a), (b) and (c) hold. Then (d) is equivalent to the following combinatorial condition:
\begin{itemize}
\item[(d')] For each $p\in M^T$, the weights $\alpha_1,\ldots, \alpha_n$ of the isotropy representation of $T$ at $p$ are \emph{pairwise linearly independent}.
\end{itemize}
\begin{rem}\label{equivalence d and d'}
To see that (d) is equivalent to (d') it is enough to apply \cite[Theorem 11.8.1]{MR1689252}, observing that condition (b) implies that $H_T^*(M;\RR)$ is a free
$H^*(BT;\RR)$-module (combine \cite[Theorem 6.5.3]{MR1689252} with \cite[Theorem 6.6.2]{MR1689252} and observe that the universal coefficient theorem implies that from (b) one has $H^{\text{odd}}(M;\RR)=0$).
\end{rem}
GKM spaces, as originally introduced by \cite{MR1489894}, have a natural generalization.
\begin{defn}\label{gkm k def}
Let $M$ and $T$ be as in Definition \ref{D: GKM manifold}, and assume that conditions
(a) and (b) hold. 
Let $k\in \NN$. 
We say that $(M,J,T)$ is a \textbf{GKM$_k$ space} if
\begin{itemize}
\item [($\text{d}_k$)] For all $h\leq \min\{\dim(T),k-1\}$, the \emph{$h$-skeleton} $$\mathcal{S}_{h} := \{ p \in M \colon \dim(T \cdot p)
		\leq h\}$$ is a finite union of $T$-invariant submanifolds of dimension $2h$ and principal orbits dimension equal to $h$.
\end{itemize}
\end{defn}

\begin{rem}
We note that the GKM$_k$ condition implies the GKM$_l$ condition for all $l\leq k$ and that the GKM$_1$ condition is equivalent to having discrete fixed points.
\end{rem}

The next lemma gives a combinatorial analogue of (d$_k$), and is the first definition of GKM$_k$ spaces, as it was originally given by Guillemin and Zara in \cite[Section 2.10]{MR1701922} (see also \cite{MR3456711}).
\begin{lemma}\label{dk equivalent d'k}
Let $M$ and $T$ be as in Definition \ref{D: GKM manifold}, and assume that conditions
(a) and (b) in Definition \ref{D: GKM manifold} hold. 
Let $k\in \NN$. 
Then condition (d$_k$) is equivalent to the following
\begin{itemize}
\item [(d$_k$')] For each $p\in M^T$, any subset of $h$ distinct isotropy weights $\alpha_1,\ldots, \alpha_h$ at $p$ are \emph{linearly independent}, for each $h\leq k$.
\end{itemize}
\end{lemma}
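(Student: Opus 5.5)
The plan is to compare both conditions with the local structure of the action near a fixed point and near a general point of the $h$-skeleton, using the equivariant slice theorem together with the equivalence (d)$\Leftrightarrow$(d') of Remark \ref{equivalence d and d'}. Throughout, $h\leq\min\{\dim T,k-1\}$ in (d$_k$), and I use the standard fact that, by (b) and the Atiyah--Bredon/Borel localization results cited in Remark \ref{equivalence d and d'}, every component of the fixed set of any subtorus $H\subset T$ contains a $T$-fixed point. The main tool is the following local description. Near $p\in M^T$ the action is linearly modelled on $T_pM=\bigoplus V_{\alpha_i}$. For a subtorus $H$, the $H$-fixed subspace is $\bigoplus_{\alpha_i|_{\mathfrak h}=0}V_{\alpha_i}$. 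For a point $x$ of the linear model, the orbit dimension equals $\dim\mathrm{span}\{\alpha_i : x_i\neq 0\}$.

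\emph{(d$_k$') $\Rightarrow$ (d$_k$).} I will show that every component $N$ of $\mathcal S_h$ is the connected component, through a $T$-fixed point $p$, of $M^{H}$ for $H$ the identity component of some isotropy group. Then $T_pN=\bigoplus_{\alpha_i|_{\mathfrak h}=0}V_{\alpha_i}$. The orbit-dimension formula shows that the points of orbit dimension $\le h$ near $p$ form the union of the coordinate subspaces $\bigoplus_{i\in I}V_{\alpha_i}$ with $\dim\mathrm{span}\{\alpha_i\}_{i\in I}\le h$.

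Under (d$_k$'), any $h+1\le k$ weights are independent. Hence such an $I$ either has $|I|\le h$, or the span of the chosen $\alpha_i$ has dimension at least $h+1$. In the latter case the subspace is not in $\mathcal S_h$. So locally $\mathcal S_h$ is a finite union of complex subspaces of dimension $h$ spanned by $h$ independent weights. On each of these, generic points have $h$-dimensional orbits.

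Globally, each component is a component of $M^{H}$ with $\dim H=r-h$, hence a closed invariant submanifold. It contains a fixed point, so by the local picture it has dimension $2h$ and principal orbits of dimension $h$. Finiteness follows because there are finitely many fixed points and finitely many weight subsets.

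\emph{(d$_k$) $\Rightarrow$ (d$_k$').} Suppose instead that at some $p$ there are $h\le k$ distinct weights $\alpha_1,\dots,\alpha_h$ that are dependent, and choose $h$ minimal. Then $h\ge2$, and the span $W$ of these weights has dimension $h-1\le k-1$. The coordinate subspace $\bigoplus_{i\le h}V_{\alpha_i}$ is fixed by the subtorus with Lie algebra $W^{\perp}$. By minimality, generic points of this subspace have orbits of dimension $h-1$. So a neighbourhood of $p$ in $\mathcal S_{h-1}$ contains a complex subspace of dimension $h>h-1$, which contradicts the $(h-1)$-skeleton being a union of $2(h-1)$-dimensional submanifolds. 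For the case $h-1>\dim T$ no dependent set of size at most $\dim T+1$ can fail in this way, since weights in $\mathfrak t^*$ can span at most $\dim T$; so we may restrict to $h-1\le\dim T$, where (d$_k$) applies.

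The step I expect to be the main obstacle is the global part of the first direction. There one must ensure that each component of $\mathcal S_h$ is genuinely a submanifold, i.e.\ a component of a fixed set $M^H$, and that it contains a $T$-fixed point, so that the local linear analysis at $p$ applies to it. I would handle this using the localization/freeness argument of Remark \ref{equivalence d and d'} (\cite[Theorem 11.8.1]{MR1689252}), applied to $M^{H}$ for each of the finitely many identity components $H$ of isotropy groups.
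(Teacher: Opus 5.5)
Your direction (d$_k$) $\Rightarrow$ (d$_k'$) is correct. Passing to a minimal dependent set of weights and working in the linear model at $p$ is a slightly more local version of the paper's argument, which instead uses the component of $M^K$ through $p$ and its principal orbit dimension. The gap is in the global step of (d$_k'$) $\Rightarrow$ (d$_k$).

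First, connected components of $\mathcal S_h$ are not the submanifolds you want. Already $\mathcal S_1$ is typically connected, being invariant spheres glued at fixed points, so ``each component of $\mathcal S_h$ is a component of $M^H$'' cannot hold as phrased. More seriously, the submanifold you actually have through a point $x\in\mathcal S_h$ is the component $N_x$ of $M^{H_x}$, where $H_x$ is the identity component of the isotropy group of $x$. Here $\dim H_x\ge r-h$, with strict inequality whenever $\dim(T\cdot x)<h$ (e.g.\ $x$ a fixed point, or $x$ on an invariant sphere when $h=2$). Under (d$_k'$) one gets $\dim N_x=2\dim(T\cdot x)<2h$ in that case. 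So your assertion that each piece ``is a component of $M^H$ with $\dim H=r-h$ \dots\ by the local picture it has dimension $2h$'' fails for $N_x$, and the proposal never shows that such points lie in a $2h$-dimensional piece. The local picture at $p$ describes $\mathcal S_h$ only near $p$; it does not place all of $N_x$ inside one global piece. Applying localization to $M^{H}$ gives you a fixed point in $N_x$, but not this containment.

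The missing idea is the completion step used in the paper. Let $p\in N_x$ be a fixed point and $\alpha_1,\dots,\alpha_l$ the weights of $T_pN_x$.
\begin{itemize}
\item The subtorus with Lie algebra $\bigcap_{i\le l}\ker\alpha_i$ acts trivially near $p$ in $N_x$, hence on all of $N_x$. So it lies in $H_x$, and in particular $l\le h$.
\item Since $n\ge r\ge h$, you can extend to $h$ weights $\alpha_1,\dots,\alpha_h$ at $p$. These are independent by (d$_k'$).
\item Let $K'$ be the subtorus with Lie algebra $\bigcap_{i\le h}\ker\alpha_i$, which has dimension $r-h$, and let $N'$ be the component of $M^{K'}$ through $p$. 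Then $K'\subset H_x$ gives $x\in N_x\subset N'$.
\item (d$_k'$) rules out any further weight vanishing on $\mathrm{Lie}(K')$, since that would give $h+1\le k$ dependent weights. Hence $T_pN'=\bigoplus_{i\le h}V_{\alpha_i}$.
\end{itemize}
Therefore $N'\subset\mathcal S_h$ is $2h$-dimensional with principal orbits of dimension $h$. Only with this step do the finitely many pairs $(p,\{\alpha_1,\dots,\alpha_h\})$ cover all of $\mathcal S_h$, which is what your finiteness count needs.
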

\begin{proof}
 (d$_k$) $\implies$ ($\text{d}'_{k}$) 

Suppose that $\dim(T)=r$, that there is a fixed point $p\in M^T$, and a subset of $h\leq k$ isotropy weights $\alpha_1,\ldots,\alpha_h$ at $p$ that are linearly dependent. In particular $h\leq \dim T$. This implies that 
$$
\mathfrak{k}:=\dim(\ker(\alpha_1)\cap \cdots \cap \ker(\alpha_h))> r-h\,.
$$
Let $K$ be $\exp(\mathfrak{k})\subset T$, then $\dim(K)>r-h$ and $K$ acts trivially on a subspace $V_K \subset T_pM$ of dimension $2h$.
The connected component $N$ of $M^K$ (the submanifold fixed by $K$) which contains $p$ is a submanifold stabilized by $K$. Furthermore, the tubular neighbourhood theorem implies $V_K\subset T_pN$. In particular, $\dim N\geq 2h$.
The principal isotropy type of $N$ contains $K$ so the dimension $\widetilde{h}$ of its orbits satisfies $\widetilde{h}\leq \dim(T)-\dim(K)<h\leq k$. Therefore, for $\widetilde{h}\leq k-1$ we found a submanifold of $\mathcal{S}_{\widetilde{h}}$ of dimension $>2\widetilde{h}$, which contradicts condition (d$_k$).

\vspace{0.3cm}
 ($\text{d}'_{k}$) $\implies$ (d$_k$) 
 
Let $h\leq \min\{r,k-1\}$ with $r=\dim(T)$. Let $x\in \mathcal{S}_h$ be any point and denote by $K\subset T$ its isotropy group. By definition of $\mathcal{S}_h$, we have $\dim K\geq r-h$. Let $N$ be the connected component of $M^K$ which contains $x$. We note that $K$ is the principal isotropy group on $N$.
 
Since $N$ is a connected component of $M^K$, \cite[Threorem 11.6.1]{MR1689252} and the fact that $H_T^*(M;\RR)$ is a free $H^*(BT;\RR)$-module (see Remark \ref{equivalence d and d'}), imply that there exists
a fixed point $p\in M^T\cap N$. Let $\alpha_1,\ldots,\alpha_l$ denote the weights belonging to $T_p N$. We claim that $l\leq h$ or equivalently $\dim(N)\leq 2h$.
Indeed, assume that $l>h$. Then by $(\text{d}'_k)$ we have 
$\dim\{\text{span}\langle \alpha_1,\ldots, \alpha_l \rangle\} \geq h+1$.
In this case
\[\dim\left(\displaystyle\bigcap_{i=1}^l\ker(\alpha_i)\right)\leq r-h-1.\]
But as $K$ acts trivially on $N$, the $\alpha_i$ vanish on $\mathrm{Lie}(K)$, which contradicts the fact that $\dim K\geq r-h$. Consequently, $l\leq h$ holds.

As the $T$-action on $M$ is effective and has a fixed point we also obtain $\dim M\geq
2r\geq 2h$. Therefore we may complete the $\alpha_1,\ldots,\alpha_l$
to a linearly independent set of weights $\alpha_1,\ldots,\alpha_h$ at $p$. Let $K'$ be
the common kernel of the associated characters. The linear independence of the $\alpha_i$
implies, $\dim K'=r-h$. Furthermore $K'\subset K$ since the latter is the common kernel of
the characters belonging to $\alpha_1,\ldots,\alpha_l$. Let $N'$ be the connected
component of $M^{K'}$ which contains $p$. By the tubular neighbourhood theorem and
principal orbit theorem we infer that $K'$ is the principal isotropy type on $N'$. In
particular $N'\subset \mathcal{S}_h$. By the same argument as for $N$ above, we conclude
that $\dim(N')\leq 2h$. But $T_p N'$ contains the irreducible representations belonging to
$\alpha_1,\ldots,\alpha_h$ so in fact $\dim(N')=2h$ and $x\in N\subset N'$. As $M$ is
compact, the number of $N'$ arising in this way is finite.

\end{proof}

\begin{rem}\label{rem: compl0}
In condition (d$_k$), the $T$-action on each of the $2h$-dimensional isotropy submanifolds $N\subset \mathcal{S}_h$ has a kernel $K\subset T$ of dimension $\dim (T)-h$, as the principal orbit dimension of $N$ is equal to $h$. In particular the quotient torus $T/K$ is of dimension $h$ and acts effectively on $N$.
\end{rem}

\begin{defn}\label{def Ham GKMk}
Let $(M,\omega,\mu)$ be a compact Hamiltonian $T$-space. We say that 
$(M,\omega,\mu)$ is a \textbf{compact Hamiltonian GKM$_k$ space}, for some
$k\in \NN$, if condition (d$_k$) above holds. 
\end{defn}

\begin{rem}\label{gkmk ham}
In the case of a Hamiltonian action, the isotropy submanifolds are again Hamiltonian $T$-manifolds. In view of Remark \ref{rem: compl0}, condition (d$_k$) is then equivalent to saying that $\mathcal{S}_h$ is a finite union of $2h$-dimensional toric manifolds (when considered with the effective quotient action induced by the $T$-action), for each $h\leq \min\{\dim(T),k-1\}$.
\end{rem}

\begin{rem}\label{rem: gkm3connection}
As connections play a large role in this paper, it is important to note that under the GKM$_k$-condition for $k\geq 3$, the connection is always unique. Indeed, if, for some adjacent $e,f\in E(\Gamma)$, there were two distinct possible choices $h_1,h_2$ for the image $\nabla_e(f)$, then it would follow that
\[\alpha(h_1) \equiv \alpha(f)\equiv \alpha(h_2)\mod \alpha(e)\]
which would violate the GKM$_3$ condition as it implies that $\alpha(h_1),\alpha(h_2),\alpha(e)$ are linearly dependent.
\end{rem}

\subsection{Compact positive monotone Hamiltonian GKM$_k$ spaces}
Let $(M,\omega)$ be a compact symplectic manifold. Since the set of almost complex structures compatible with
$\omega$ is contractible, it is possible to consider the Chern classes $c_{2j}\in H^{2j}(M;\ZZ)$ of $(TM,J)$ as invariants of $(M,\omega)$. 
As defined in the introduction, we say that $(M,\omega)$ is \emph{positive monotone} if $c_1=[\omega]$.
Since we deal with symplectic manifolds with Hamiltonian group actions, we want to see
what the implications of this condition
in Hamiltonian geometry are. 

Suppose that a compact torus of rank $r$ is acting on $(M,\omega)$ in a Hamiltonian way. This means that there exists a moment
map $\mu\colon M \to \mathfrak{t}^*$ satisfying
\begin{equation}\label{def moment map 2}
d\langle \mu, \xi \rangle = - \iota_{\xi^\#} \omega \quad \text{for all  }\xi \in \mathfrak{t},
\end{equation}
where $\xi^\#$ is the vector field associated to $\xi$ and $\langle \cdot , \cdot \rangle$ is the natural pairing between $\mathfrak{t}^*$ and $\mathfrak{t}$. 
We recall that, unless otherwise stated, we assume that the torus action is effective and call the triple $(M,\omega,\mu)$ a \textit{Hamiltonian $T$-space}. 
Moreover in this paper we are only concerned with the case in which the fixed point set $M^T$ is finite. 

The positive monotonicity has a very natural translation in equivariant terms. Indeed, the existence of a Hamiltonian action implies that both $[\omega]$ and $c_1$ admit equivariant extensions
 (as equivariant differential forms in the Cartan model for
 equivariant cohomology),
namely classes $[\omega-\mu]$ and $c_1^T$ in $H^2_T(M;\RR)$ that restrict to $[\omega]$ and $c_1$ respectively, where the restriction is given by the natural
map $\rho\colon H^*_T(M;\RR)\to H^*(M;\RR)$ induced by the inclusion of the trivial group $\{e\}$ in $T$. The condition $c_1=[\omega]$ implies that, modulo
an element of $\mathfrak{t}^*$, the equivariant extensions are equal. Since the moment map
can be modified by a constant, it is not restrictive to assume that 
\begin{equation}\label{equivariant monotonicity}
c_1^T=[\omega-\mu]\,.
\end{equation}
We observe that $c_1^T(p)=\sum_{i=1}^n \alpha_i(p)$ for every fixed point $p$, where the $\alpha_i(p)$s are the weights of the isotropy representation of $T$ at $p$, and it can be easily proved
that \eqref{equivariant monotonicity} is equivalent to 
\begin{equation}\label{weight sum formula}
c_1^T(p)=\sum_{i=1}^n \alpha_i(p) = -\mu(p)\, \quad \text{for all }p\in M^T\,.
\end{equation}
Formula \eqref{weight sum formula} is referred to as the \emph{weight sum formula} (see \cite[Section 3.1]{CSSMonotone}).
\begin{defn}
A \textbf{compact positive monotone Hamiltonian GKM$_k$ space} is a Hamiltonian $T$-space $(M,\omega,\mu)$ such that the $T$ action is GKM$_k$ and such that 
$c_1=[\omega]$. 

Without loss of generality we also assume that \eqref{weight sum formula} holds.
\end{defn}
Suppose that the moment map is injective if restricted to the fixed point set. 
We note that, since the weight sum formula implies the positive monotonicity, the latter can be checked directly from the GKM graph embedded in $\mathfrak{t}^*$ via the moment map: it is sufficient to
check that \eqref{weight sum formula} holds at every vertex $p\in V \cong M^T$; this kind of graphs are also called \emph{reflexive GKM graphs} (see \cite[Section 5.3]{MR3695881} as well as the figures therein).

\section{A bound on the line Chern numbers}
\label{sec: CLN bound}

A central step in obtaining our main results is to establish control over the line Chern numbers of a compact positive monotone Hamiltonian GKM space. 
In what follows, we recall several fundamental facts required for these results.

Suppose that $(M,\omega,\mu)$ is a compact Hamiltonian GKM space and $(\Gamma, \alpha)$ its
GKM graph. Then the set $\mathcal{S}=\{S^2_e\}_{e\in E}$ of $T$-invariant symplectic spheres, in
one-to-one correspondence with the undirected edges of $\Gamma$, is a toric one-skeleton in the sense
of \cite[Def. 4.10, Lemma 4.11]{MR3695881} (see also \cite[Def. 4.13]{MR4940256}). Therefore, combining 
\cite[Thm. 1.5]{MR3695881} with Lemma \ref{magnitude and c1} one obtains immediately the following
\begin{prop}\label{magnitudes topological}
Let $(M, \omega, \mu)$ be a compact Hamiltonian GKM space with GKM graph $(\Gamma,\alpha)$. Let $n$ be the dimension of $M$ and $\mathbf{b}=(b_0,b_2,\ldots,b_{2n})$ 
be the vector of its even Betti numbers. Then the sum of the magnitudes only depends on $\mathbf{b}$, more precisely
\begin{equation}\label{sum magn}
\sum_{e\in E}m(e)= \sum_{j=0}^n b_{2j}\Big[ 6j(j-1)+\frac{5n-3n^2}{2}\Big]
\end{equation}
\end{prop}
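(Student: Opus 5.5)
Since the paper has already verified that the spheres $\{S^2_e\}$ form a toric one-skeleton in the sense of \cite[Def.~4.10]{MR3695881}, the proof is essentially a translation, carried out in three steps.

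\textbf{Step 1: identify the sum with $c_1c_{n-1}$.} I would invoke \cite[Thm.~1.5]{MR3695881}. For a compact Hamiltonian $T$-space with isolated fixed points admitting a toric one-skeleton $\mathcal S$, the sum over the spheres of $\mathcal S$ of $c_1[S]$ equals $c_1c_{n-1}[M]$. This is the statement that $\sum_{S\in\mathcal S}[S]$ is Poincar\'e dual to $c_{n-1}$. I would recall why: by ABBV localization, the equivariant class $\sum_S \tau_S$ restricts at each fixed point $p$ to $\sum_{f\in E_p}\prod_{g\neq f}\alpha(g)$, which is $c_{n-1}^T(p)$.

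\textbf{Step 2: pass from spheres to directed edges.} Each sphere $S^2_e$ corresponds to the two directed edges $e$ and $\overline e$. By Lemma \ref{magnitude and c1} and \eqref{sym ms}, $m(e)=m(\overline e)=c_1[S^2_e]$. So the sum over $E$ counts every sphere twice, giving
\[
\sum_{e\in E} m(e)=2\,c_1c_{n-1}[M].
\]
The main point to check here is this factor of $2$: I need to confirm whether \cite[Thm.~1.5]{MR3695881} is stated per sphere or already per oriented edge, and match conventions so that \eqref{sum magn} comes out exactly.

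\textbf{Step 3: express $c_1c_{n-1}[M]$ in Betti numbers.} I would use the result of \cite[Corollary 3.1]{MR3230015} recalled in the introduction, which rests on the Libgober--Wood type identity. This gives
\[
c_1c_{n-1}[M]=\sum_{p}(-1)^p\chi^p\Big(6p^2-\frac{n(3n-5)}{2}\Big),
\]
where $\chi^p$ is the $\chi_y$-coefficient. For Hamiltonian actions with isolated fixed points, $\chi_y$-genus localization gives $(-1)^p\chi^p=b_{2p}$, namely the number of fixed points with exactly $p$ negative weights for a generic component of the moment map. Substituting yields
\[
c_1c_{n-1}[M]=\sum_j b_{2j}\Big(6j^2-\frac{3n^2-5n}{2}\Big).
\]
Reconciling this with \eqref{sum magn}, which reads $\sum_j b_{2j}[\,6j(j-1)+(5n-3n^2)/2\,]$, requires the symmetry $b_{2j}=b_{2n-2j}$. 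This symmetry holds by Poincar\'e duality, and it lets one trade $\sum_j b_{2j}\,j$ for $\frac n2\sum_j b_{2j}$.

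The step I expect to be the main obstacle is this final algebraic bookkeeping: getting the normalisation constant and the factor from Step 2 consistent with the stated formula.
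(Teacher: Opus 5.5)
Your Steps 1 and 2 follow the paper's route. The paper combines \cite[Thm.~1.5]{MR3695881} for the toric one-skeleton $\{S^2_e\}$ with Lemma \ref{magnitude and c1}, and the right-hand side of \eqref{sum magn} is by construction the constant of that theorem. Step 3, however, rests on a false formula. The correct identity is \cite[Cor.~3.1]{MR3230015}, which comes from the Libgober--Wood computation $\sum_p(-1)^p\chi^p\,p(p-1)=\frac{n(3n-5)}{12}c_n[M]+\frac16 c_1c_{n-1}[M]$:
\[
c_1c_{n-1}[M]=\sum_{p=0}^n N_p\Big(6p(p-1)+\frac{5n-3n^2}{2}\Big),\qquad N_p=(-1)^p\chi^p=b_{2p}.
\]
You wrote $6p^2$ in place of $6p(p-1)$. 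For $\CC P^2$ with $\mathbf b=(1,1,1)$, your expression gives $27$, while $c_1^2[\CC P^2]=9=C(2,\mathbf b)$.

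Poincar\'e duality cannot repair this. Your expression minus the right-hand side of \eqref{sum magn} equals $6\sum_j j\,b_{2j}$. The symmetry $b_{2j}=b_{2n-2j}$ evaluates this to $3n\chi\neq 0$, so duality proves the two expressions \emph{differ} rather than reconciling them. With the correct formula no symmetry argument is needed, since it agrees termwise with $C(n,\mathbf b)$.

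The factor of $2$ you flag in Step 2 should not be ``matched away''; your computation there is right. Both $\mathrm{PD}(c_{n-1})=\sum_S[S]$ (your localization argument) and \cite[Thm.~1.5]{MR3695881} have one term per sphere. Summing $m$ over directed edges therefore yields $2\,C(n,\mathbf b)$: for $\CC P^2$ there are six directed edges of magnitude $3$, total $18$. The identity \eqref{sum magn} holds with one term per sphere $S^2_e$, i.e.\ summing over undirected edges. This is how the paper actually uses it: the total $48=c_1c_2[M]$ in Example \ref{line chern numbers not Ham} is taken over the $18$ undirected edges. With both corrections your argument gives $\sum_{S}c_1[S]=c_1c_{n-1}[M]=C(n,\mathbf b)$, which is the proposition.
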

\begin{rem}\label{remark magnitudes}
The right hand side of \eqref{sum magn} is nothing else but the constant $C(n,\mathbf{b})$
in \cite[Thm. 1.5]{MR3695881} (see its proof). To be coherent with this, we
also define in this paper
\begin{equation}\label{definition Cnb}
C(n,\mathbf{b}):=\sum_{j=0}^n b_{2j}\Big[ 6j(j-1)+\frac{5n-3n^2}{2}\Big]\,.
\end{equation}
\end{rem}
If $(M,\omega,\mu)$ is a compact positive monotone Hamiltonian GKM space with $c_1=[\omega]$, then one has 
\begin{equation}\label{magn pos}
m(e)=c_1[S^2_2]=\int_{S^2_e} \omega >0\,,
\end{equation}
as $S^2_e$ is a symplectic sphere. Therefore
\eqref{sum magn} has the following consequences:
\begin{enumerate}
\item[(i)] For each fixed $\mathbf{b}=(b_0,b_2,\ldots,b_{2n})$ there are only \emph{finitely many possible magnitudes}, as they are positive integers partitioning the right hand side of \eqref{sum magn}. In particular 
\begin{equation}\label{upper bound ms}
0< m(e)< C(n,\mathbf{b}) \quad \text{for all }e\in E\,.
\end{equation}
The upper bound in the above equation could be refined using the \emph{index} $k_0$ of $(M,\omega)$, i.e.\ 
the largest positive integer dividing $c_1$ in $H^2(M;\ZZ)$ (Note that $c_1=[\omega]$ implies that $c_1\neq 0$). Indeed, $k_0$ divides all magnitudes $m(e)$ so from $\sum m(e)= C(n,\mathbf{b})$ and $k_0\leq m(e)$ we get
\[k_0\leq m(e) \leq C(n,\mathbf{b}) - (|E|-1)k_0\]
with $|E|$ denoting the number of edges. As this has no major effect on the growth rates of our quantitative results, we do not pursue this refinement and settle to work with Equation \eqref{upper bound ms} in what follows.

\item[(ii)] We note that, since the left hand of \eqref{sum magn} is positive, the
	right hand side must be positive too. This, together with other arguments, give
	inequalities on the Betti numbers of a compact positive monotone Hamiltonian GKM space of a certain
	dimension. (For a finer analysis of these inequalities, see for instance 
	\cite[Section 5]{MR3695881} and \cite[Section 4.2]{MR4940256}.)
\end{enumerate}

In this paper we focus on a refinement of (i) for GKM$_3$-spaces.

\begin{thm}\label{thm: mainthm}
Let $n\in \NN$, $n\geq 3$, and $\mathbf{b}\in \NN^{n+1}$. Then for each $n$ and $\mathbf{b}$, there are only finitely many possible values for the line Chern numbers of a compact 
positive monotone
Hamiltonian GKM$_3$ space $(M,\omega,\mu)$ of dimension $2n$ and with even Betti numbers $(b_0,b_2,\ldots,b_{2n})=\mathbf{b}$. 

More precisely, let $C(n,\mathbf{b})$ be the constant defined in \eqref{definition Cnb}. 
Then the following bounds hold for any line Chern number $c_e(f)$ of pairs of distinct adjacent edges $e,f$  in the GKM graph: 
\begin{align}
\label{bound chern line numbers} (3-n)C(n,\mathbf{b})-2\leq c_e(f)< C(n,\mathbf{b})
\end{align}
\end{thm}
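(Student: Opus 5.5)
The plan is to exploit the toric $2$-skeleton. Fix distinct adjacent edges $e,f\in E_p$ with $q=t(e)$. By Lemma \ref{dk equivalent d'k} the weights $\alpha(e),\alpha(f)$ are linearly independent. So by the GKM$_3$ condition (Remark \ref{gkmk ham}) there is a $4$-dimensional isotropy submanifold $N\subset\mathcal{S}_2$ containing $S^2_e$ and $S^2_f$, on which a $2$-torus acts toric-ly. It is a symplectic toric $4$-manifold whose moment image is a Delzant polygon $\Delta_N$ with $\mu(S^2_e)$ and $\mu(S^2_f)$ as two of its edges.

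Let $g$ denote the other edge of $\Delta_N$ at $q$. Its weight is congruent to $\alpha(f)$ modulo $\alpha(e)$, since both lie in $\mathrm{span}\langle\alpha(e),\alpha(f)\rangle$. By the uniqueness of the connection (Remark \ref{rem: gkm3connection}) we get $\nabla_e(f)=g$. Hence $c_e(f)$ is the line Chern number of the pair of polygon edges $f,g$ along $e$, and by Lemma \ref{geom mean line numbers} it is the self-intersection of $S^2_e$ inside $N$.

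\textbf{Edge lengths.} Positive monotonicity gives edge lengths. By Lemma \ref{magnitude and c1} together with the weight sum formula \eqref{weight sum formula},
\[\mu(q)-\mu(p)=c_1^T(p)-c_1^T(q)=m(e)\alpha(e).\]
So each edge $e$ of the graph, viewed in $\mathfrak t^*$, is the segment from $\mu(p)$ in direction $\alpha(e)$ of length $m(e)$ in units of $\alpha(e)$. By \eqref{magn pos} and \eqref{upper bound ms}, $1\le m(e)<C(n,\mathbf b)$.

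\textbf{Upper bound.} Work in the plane $\mu(p)+\mathrm{span}_\RR\langle\alpha(e),\alpha(f)\rangle$ with the rational coordinates determined by $\alpha(e)=(1,0)$ and $\alpha(f)=(0,1)$; this coordinate choice avoids any lattice-index issue. Then $\alpha(g)=\alpha(f)-c_e(f)\alpha(e)=(-c,1)$ with $c=c_e(f)$.

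Suppose $c>0$. By convexity, $\Delta_N$ lies in the upper half plane and inside the wedge bounded by the lines through $\mu(p)$ in direction $(0,1)$ and through $\mu(q)$ in direction $(-c,1)$. These lines meet at height $m(e)/c$. On the other hand, the far vertex of $f$ is $\mu(p)+m(f)(0,1)$, which sits at height $m(f)\ge1$ and must lie in $\Delta_N$. Hence $1\le m(e)/c$, i.e.
\[c_e(f)\le m(e)<C(n,\mathbf b).\]
If $c\le 0$ the bound is trivial. I expect this convexity step to be the main obstacle. The points to verify are that $\Delta_N$ really is a convex polygon whose edges at $\mu(p)$ and $\mu(q)$ are exactly $f,e$ and $e,g$, and that the heights are measured consistently in the chosen coordinates.

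\textbf{Lower bound.} The lower bound then comes from the magnitude. By \eqref{alpha e e} and \eqref{def magnitude},
\[m(e)=2+c_e(f)+\sum_{h\in E_p\setminus\{e,f\}}c_e(h).\]
The sum has $n-2$ terms, and each satisfies $c_e(h)\le m(e)$ by the previous step. Therefore
\[c_e(f)\ge m(e)-2-(n-2)m(e)=(3-n)m(e)-2.\]
Since $n\ge3$ and $m(e)<C(n,\mathbf b)$, this gives $c_e(f)\ge(3-n)C(n,\mathbf b)-2$, which is \eqref{bound chern line numbers}. Finiteness of the possible line Chern numbers follows immediately.
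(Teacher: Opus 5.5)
Your proof is correct and is essentially the paper's argument: the toric $4$-dimensional isotropy submanifold through $p$, convexity of its moment polygon, edge vectors $m(e)\alpha(e)$ from the weight sum formula, and the same lower bound via the magnitude. The only difference is in the convexity step. You use the supporting line of $\nabla_e(f)$ at $\mu(t(e))$ together with the endpoint of $f$, getting $c_e(f)\le m(e)/m(f)$. The paper uses the cone at $\mu(p)$ together with the endpoint of $\nabla_e(f)$, getting $c_e(f)\le m(e)/m(\nabla_e(f))$, and your estimate is exactly the paper's applied to the pair $(\overline{e},\nabla_e(f))$.

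One justification needs repair. Lying in $\mathrm{span}\langle\alpha(e),\alpha(f)\rangle$ does not imply $\alpha(g)\equiv\alpha(f) \bmod \alpha(e)$ integrally, so that reasoning does not give $\nabla_e(f)=g$. Argue instead as follows. By compatibility, $\alpha(\nabla_e(f))=\alpha(f)-c_e(f)\alpha(e)$ lies in that span. By the GKM$_3$ condition, at most two weights at $t(e)$, namely $-\alpha(e)$ and $\alpha(g)$, can lie in a $2$-dimensional subspace. Since $\nabla_e(f)\neq\overline{e}$, this forces $\nabla_e(f)=g$.
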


We note that the case $n\leq 2$, which is excluded above, can be settled separately through the following

\begin{rem}\label{rem open close toric}
By Remark 
\ref{line chern triples} \eqref{alpha e e}, $c_e(e)$ is always 2 and for $n=1$ the only 
GKM space is the sphere with standard toric action, whose GKM graph has one edge $e$ and the only line Chern number is $c_e(e)=2$.
For $n=2$ a compact positive monotone Hamiltonian GKM$_3$ space is just a symplectic toric manifold, therefore 
a smooth toric Fano 2-fold, and the corresponding moment map is a smooth reflexive polygon. Their classification \cite{Ewald1988} asserts that there are, modulo a suitable notion of isomorphism, only 5 such manifolds and their corresponding line Chern numbers satisfy  
$$
-1\leq c_e(f)\leq 1
$$ 
for any pair of distinct adjacent edges $e,f$ in the associated GKM graph. 

For $n\geq 3$ the classification of smooth toric Fano $n$-folds becomes more complicated. Since any such variety
is a positive monotone Hamiltonian GKM$_n$ space, we believe that \eqref{bound chern line numbers} and \eqref{eq:global bound} can have interesting applications in their classification (see also Section \ref{sec application reflexive}).
\end{rem}

\begin{proof}[Proof of Theorem \ref{thm: mainthm}]
We consider  the unique connection $\nabla$ (cf. Remark \ref{rem: gkm3connection}) on the
GKM graph $(\Gamma,\alpha)$ of $M$, with $\Gamma=(V,E)$. Let $v\in V$ be the vertex
corresponding 
to the fixed point $p\in M^T$, and
$e,f$ be edges in $E_v$ and $h=\nabla_e(f)$. By definition of $c_e(f)$ we have 
\begin{equation}\label{alpha h}
\alpha(h)=\alpha(f)-c_e(f)\alpha(e)
\end{equation}
 (see \eqref{def ce(f)}) and we intend to bound $c_e(f)$. 
 For each $\beta\in \ZZ_{\mathfrak{t}}^*$ define
 $$
 \mathfrak{h}_{\beta}:=\{\xi\in \mathfrak{t}\mid \beta(\xi)\in \ZZ\}\,.
 $$
Let $H\subset T$ be the codimension $2$ subgroup defined as $H:=\exp( \mathfrak{h}_{\alpha(e)}\cap
\mathfrak{h}_{\alpha(f)})$. Since $(M,\omega,\mu)$ is a GKM$_3$ Hamiltonian space (see Definition \ref{gkm k def}), 
the connected component $K$ of $M^H$ containing $p\in M^T$ is a $4$-dimensional symplectic submanifold with an effective Hamiltonian action of the 2-dimensional
quotient torus $T/H$, whose moment map $\mu_K$ can be identified with 
$\mu|_{K}\colon K\to \mathfrak{t}^*$ (see for instance the discussion after Remark 2.23 in \cite{CSSMonotone}). 
Therefore $(K,\omega|_K,\mu_K)$ is a symplectic toric submanifold, the action is GKM, and its
GKM graph $\Gamma_K$ can be identified with the 1-skeleton of the moment map image $\mu(K)=:P$. 

We observe that the moment map $\mu_K$ is injective on the fixed point set of $K$, and we identify 
the latter with the set of vertices of $\Gamma_K$ using the moment map $\mu_K=\mu|_K$. 
Consider now $p,q,r,s\in M^T$ such that $q=t(f)$, $r=t(e)=i(h)$ and $s=t(h)$ (see Figure \ref{cone mm}).
By Lemma \ref{magnitude and c1} and the weight sum formula \eqref{weight sum formula} 
\begin{align}\label{eq:edge vector}
\mu(q)-\mu(p)=\; & c_1^T(p)-c_1^T(q)=m(f)\alpha(f),
\end{align}
and analogously, using that $h=\nabla_e(f)$ and \eqref{alpha h},
\begin{align}
\label{eq2}
\mu(r)-\mu(p)=\;& m(e)\alpha(e),\\
\label{eq3}
\mu(s)-\mu(r)=\;& m(h)\alpha(h)=m(\nabla_e(f))\big(\alpha(f)-c_e(f)\alpha(e)\big)\,.
\end{align}
\begin{figure}[h]
	\centering
	\includegraphics{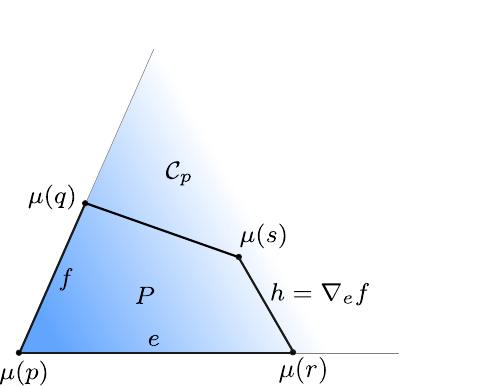}
	\caption{The cone $\mathcal{C}_{p}$ and the moment image $P$ of $K$.} 
	\label{cone mm}
\end{figure}
Let $\mathcal{C}_p$ be the two-dimensional cone in $\mathfrak{t}^*$ given by 
$\mathcal{C}_p:=\mu(p)+\{a\,\alpha(e)+b\,\alpha(f), a,b\in \RR_{\geq 0}\}$. By convexity of
$P$ we have that $P\subset \mathcal{C}_p$ and in particular
$\mu(s)\in \mathcal{C}_p$, which means that $\mu(s)-\mu(p)$ must be a non-negative combination of 
$\alpha(e)$ and $\alpha(f)$. Using \eqref{eq2} and \eqref{eq3} we have that
$$
\mu(s)-\mu(p)=m(\nabla_e(f))\big(\alpha(f)-c_e(f)\alpha(e)\big)+m(e)\alpha(e)
$$
which, by what we observed before, implies that 
\begin{equation}\label{first bound m}
m(e)-m(\nabla_e(f))c_e(f)\geq 0\,.
\end{equation}
We obtain 
\begin{equation}\label{first upper bound cef}
c_e(f)\leq m(e)< C(n,\mathbf{b})\,,
\end{equation} 
where the first inequality follows from $m(\nabla_e(f))\geq 1 $ and \eqref{first bound m},
in case $c_e(f)\geq 0$, and from $m(e)\geq 1$ in case $c_e(f)<0$, whereas the
latter inequality is exactly \eqref{upper bound ms}. 
Having achieved this upper bound for all $e\neq f \in E_p$ we also get a lower bound. Indeed, using \eqref{alpha e e}, \eqref{def magnitude}, and \eqref{first upper bound cef}, we have
\[c_e(f) = m(e)-c_e(e)-\sum_{f'\in E_p\backslash\{e,f\}} c_e(f')= m(e)-2-\sum_{f'\in E_p\backslash\{e,f\}} c_e(f')\geq -2 -(n-3)m(e),\]
where the latter is greater or equal to $-2-(n-3)C(n,\mathbf{b})$ when $n\geq 3$. 
\end{proof}

\begin{rem}
From \eqref{eq cefs}, \eqref{sym ms} and \eqref{first bound m}, it easily follows that 
$$
c_e(f)\leq \min \left\{\frac{m(e)}{m(f)},\frac{m(e)}{m(\nabla_e(f))}\right\}\,.
$$
\end{rem}
The line Chern numbers can also be bounded in terms of the Euler characteristic as follows

\begin{cor}\label{global bound}
Let $n,\chi \in \NN$ with $n\geq 3$. Then there are only finitely many possible values for the line Chern numbers of a compact positive monotone
Hamiltonian GKM$_3$ space $(M, \omega, \mu)$ of dimension $2n$ and Euler characteristic $\chi$. 

More precisely one has that the following bounds hold for any pair of distinct adjacent edges $e,f$ in the GKM graph:
\begin{align}
\label{eq:global bound} (3-n)\chi\cdot \frac{n(n+1)^2}{2}-2\leq c_e(f)< \chi\cdot\frac{n(n+1)^2}{2}
\end{align}

\end{cor}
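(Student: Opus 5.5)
The plan is to deduce the corollary directly from Theorem \ref{thm: mainthm}. It suffices to show that $C(n,\mathbf{b})\leq \chi\cdot\frac{n(n+1)^2}{2}$ for every admissible vector of even Betti numbers $\mathbf{b}$ with $\sum_j b_{2j}=\chi$. Recall that the fixed points are isolated, so $H^{\mathrm{odd}}(M)=0$ (Remark \ref{rem: contractible set of acs}) and $\chi=\sum_{j=0}^n b_{2j}$. Moreover, all $b_{2j}\geq 0$, and there are only finitely many vectors $\mathbf{b}\in\NN^{n+1}$ with this sum. So finiteness follows at once from Theorem \ref{thm: mainthm}, and only the explicit bound needs work.

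For the explicit bound, I will estimate $C(n,\mathbf{b})=\sum_j b_{2j}\,g(j)$ with $g(j)=6j(j-1)+\frac{5n-3n^2}{2}$. Since the $b_{2j}$ are nonnegative and sum to $\chi$, we have $C(n,\mathbf{b})\leq \chi\max_{0\leq j\leq n} g(j)$. The function $g$ is a convex quadratic in $j$, so the maximum is attained at an endpoint. We have $g(0)=\frac{5n-3n^2}{2}\leq 0$ for $n\geq 2$, while $g(n)=6n^2-6n+\frac{5n-3n^2}{2}=\frac{9n^2-7n}{2}$. It then remains to check the elementary inequality $\frac{9n^2-7n}{2}\leq \frac{n(n+1)^2}{2}$, that is, $9n-7\leq n^2+2n+1$, equivalently $n^2-7n+8\geq 0$.

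This last inequality fails for $n=3,4,5$, so I would not rely on the crude endpoint bound in those cases. Instead I would use Poincaré duality, $b_{2j}=b_{2n-2j}$, which lets me pair the terms $j$ and $n-j$. The averaged coefficient is
\[
\tfrac12\bigl(g(j)+g(n-j)\bigr)=3j(j-1)+3(n-j)(n-j-1)+\tfrac{5n-3n^2}{2},
\]
which is again maximized at $j=0$ or $j=n$. There it equals $3n(n-1)+\frac{5n-3n^2}{2}=\frac{3n^2-n}{2}$. Since $3n-1\leq (n+1)^2$ holds for all $n$, this gives $C(n,\mathbf{b})\leq\chi\frac{3n^2-n}{2}\leq\chi\frac{n(n+1)^2}{2}$ uniformly in $n$. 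So the symmetrization argument is the one I will actually use, and it makes the case split unnecessary.

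Finally, I substitute this bound into \eqref{bound chern line numbers}. The upper bound is immediate. For the lower bound, since $3-n\leq 0$, replacing $C(n,\mathbf{b})$ by the larger quantity $\chi\frac{n(n+1)^2}{2}$ only decreases $(3-n)C(n,\mathbf{b})-2$, which yields \eqref{eq:global bound}. The main obstacle is the bookkeeping of getting a bound on $\max_j$ of the coefficients that holds for all $n\geq3$; the Poincaré duality symmetrization resolves it.
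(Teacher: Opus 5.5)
Your proposal is correct. At the top level it matches the paper: reduce to Theorem \ref{thm: mainthm} and show $C(n,\mathbf{b})\leq \chi\cdot\frac{n(n+1)^2}{2}$. The difference is in how you prove that estimate, and your version is actually more careful than the paper's one-line argument.

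\textbf{The paper's route.} The paper argues term by term. It uses $b_{2j}\leq\chi$ together with the identity $\sum_{j=0}^n\bigl[6j(j-1)+\frac{5n-3n^2}{2}\bigr]=\frac{n(n+1)^2}{2}$. As written, this step is not valid. The coefficients $6j(j-1)+\frac{5n-3n^2}{2}$ are negative for $j=0,1$, and for further small $j$ once $n\geq 4$. For those terms, $b_{2j}\leq\chi$ gives the inequality in the wrong direction. For instance, $b_0=1<\chi$ makes the $j=0$ term strictly larger than $\chi$ times its coefficient.

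\textbf{Your route.} You use three facts:
\begin{itemize}
\item nonnegativity of the $b_{2j}$;
\item $\sum_j b_{2j}=\chi$;
\item Poincar\'e duality, which lets you replace each coefficient by its symmetrization.
\end{itemize}
The symmetrized coefficient is a convex function of $j$, maximal at the endpoints. This gives a rigorous proof of the needed inequality and in fact the sharper bound $C(n,\mathbf{b})\leq \chi\cdot\frac{3n^2-n}{2}$, quadratic rather than cubic in $n$. You are also right that the cruder bound $\chi\max_j g(j)$, without symmetrization, fails for $n=3,4,5$, so the duality step is genuinely needed for your approach.

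\textbf{What each buys.} The paper's argument only buys brevity. Your sharper constant would improve $\beta(n,\chi)$ in Remark \ref{rem: Knchi}, and with it the downstream constants $K$, $K'$ and $L$.
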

\begin{proof}
It is easy to see that, since $b_{2j}\leq \chi$ for all $j=0,\ldots,n$, for every vector of even Betti numbers $\mathbf{b}$ one has
\begin{equation}\label{bound C and chi}
C(n,\mathbf{b})=\sum_{j=0}^n b_{2j}\Big[ 6j(j-1)+\frac{5n-3n^2}{2}\Big]\leq \chi \sum_{j=0}^n \Big[ 6j(j-1)+\frac{5n-3n^2}{2}\Big] = \chi\cdot\frac{n(n+1)^2}{2}
\end{equation}
and the inequality \eqref{eq:global bound} is a direct consequence of those in Theorem \ref{thm: mainthm}.
\end{proof}

Line Chern numbers do not contain the full information on the labels of a GKM graph, therefore it is useful to introduce the following weaker notion of equivalence, the implications of which will be studied further in Section \ref{sec: max extensions}.

\begin{defn}
We call two GKM graphs $(\Gamma,\alpha)$ and $(\Gamma',\alpha')$ with fixed connections
$\nabla$ and $\nabla'$ respectively
\emph{virtually isomorphic} if there is an isomorphism of graphs
$$\Phi\colon \Gamma=(V,E)\rightarrow \Gamma'=(V',E')$$  such that:
\begin{itemize}
\item[(i)] $\Phi$ is \emph{compatible with the connection}, namely 
$$
\Phi(\nabla_e(f))=\nabla'_{\Phi(e)}\Phi(f)\quad\text{for all }e,f\in E_p,\;\;\ \text{for all }p\in V
$$
\item[(ii)] $\Phi$ \emph{preserves the line Chern numbers}, namely
$$
c_{e}(f)=c_{\Phi(e)}(\Phi(f)) \quad\text{for all }e,f\in E_p,\;\;\ \text{for all }p\in V\,.
$$
 \end{itemize}
\end{defn}

Given the definition above, the following is a direct consequence of Corollary \ref{global bound}.
\begin{cor}\label{cor: main}
For each $n, \chi\in \NN$,
there are only finitely many virtual isomorphism classes of GKM graphs of compact positive monotone Hamiltonian GKM$_3$ spaces  of Euler characteristic $\chi$ and dimension $2n$.
\end{cor}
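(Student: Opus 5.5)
The plan is a counting argument: a virtual isomorphism class is determined by finitely many discrete data, and Corollary \ref{global bound} puts each datum in a finite range.

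First, the underlying graph is constrained. For a compact Hamiltonian GKM space the fixed points are isolated and $H^{\mathrm{odd}}(M;\ZZ)=0$ (Remark \ref{rem: contractible set of acs}). Hence the number of vertices $|V|=|M^T|$ equals $\chi$, and the graph $\Gamma$ is $n$-valent on $\chi$ vertices. For GKM$_3$ spaces there are no multiple edges: two edges from the same vertex to the same endpoint would carry weights that are equal up to sign, which is excluded by pairwise independence. Even allowing multigraphs, there are only finitely many $n$-valent (multi)graphs on $\chi$ labelled vertices. So up to graph isomorphism $\Gamma$ ranges over a finite set.

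Second, the connection. For each directed edge $e$, $\nabla_e$ is a bijection $E_{i(e)}\to E_{t(e)}$ between sets of size $n$. The graph has $n\chi$ directed edges, so for a fixed $\Gamma$ there are at most $(n!)^{n\chi}$ connections. By Remark \ref{rem: gkm3connection} the connection is unique in the GKM$_3$ case, but we only need the finite bound.

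Third, the line Chern numbers. For each pair $(e,f)$ with $f\in E_{i(e)}$, the value $c_e(f)$ is $2$ if $f=e$, by \eqref{alpha e e}. Otherwise Corollary \ref{global bound} places it in the finite integer interval \eqref{eq:global bound}, whose endpoints depend only on $n$ and $\chi$. This is where $n\geq 3$ is needed. For $n\leq 2$ we instead use Remark \ref{rem open close toric}: the line Chern numbers are bounded by $2$ in absolute value. So the whole function $(e,f)\mapsto c_e(f)$ ranges over a finite set of size at most $(\text{interval length})^{n^2\chi}$.

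Finally, two GKM graphs with the same triple (abstract graph, connection, line Chern function) are virtually isomorphic via the identity labelling. After we fix an isomorphism of the underlying graphs to a representative in the finite list, the definition of virtual isomorphism asks for nothing beyond matching these data. Hence the number of virtual isomorphism classes is at most the product of the three finite counts.

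There is no real obstacle. The only point needing care is the identification $|V|=\chi$ via isolated fixed points and vanishing odd cohomology (from $\chi=\sum b_{2j}=\dim H^*(M)=|M^T|$ by localization/Morse theory). The low-dimensional cases $n\leq 2$ also have to be handled separately, as above.
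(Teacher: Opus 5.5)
Your proposal is correct and is essentially the paper's proof. There are finitely many $n$-valent graphs on $\chi$ vertices and finitely many connections, and Corollary \ref{global bound} confines the line Chern numbers to a finite range, so only finitely many triples (graph, connection, line Chern numbers) and hence only finitely many virtual isomorphism classes occur.

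Your separate treatment of $n\le 2$ via Remark \ref{rem open close toric} usefully covers a case the paper's short proof passes over. The aside on multiple edges is unnecessary and slightly off: the moment map forces such weights to be parallel, not equal up to sign. It is harmless, though, since you allow multigraphs anyway.
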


\begin{proof}
Let $(M,\omega,\mu)$ be a $2n$-dimensional compact positive monotone Hamiltonian GKM$_3$ space with Euler characteristic $\chi$. 

Since $\chi$ is exactly the number of vertices of the associated GKM graph $(\Gamma,\alpha)$ and $\Gamma$ is $n$-regular, there are only finitely many admissible 
combinatorial candidates for the unlabeled graph $\Gamma$ and hence also for $\nabla$ (up to graph isomorphism preserving the connection). 
Once $(\Gamma,\nabla)$ is fixed, Corollary \ref{global bound} asserts that there are only finitely many possibilities for the line Chern numbers, which concludes the proof. 
\end{proof}

\begin{ex}\label{line chern numbers not Ham} The arguments in this section rely on the upper bound on the magnitudes, which is achieved
using the positive monotone GKM condition (see Remark \ref{remark magnitudes}), and the upper bound on the line Chern numbers, which is achieved using 
the convexity properties of Hamiltonian GKM$_3$ actions (see Theorem \ref{thm: mainthm}). 
In this example we provide a family of abstract GKM$_3$ graphs such that the associated magnitudes are positive and constant within the family, while the line Chern numbers are unbounded. 
In particular, this shows that this family of abstract GKM$_3$ graphs does not come from Hamiltonian actions, and that the Hamiltonian condition in Theorem \ref{thm: mainthm} is necessary. 

Fix a basis $\alpha,\beta,\gamma$ of $\ZZ^3$, let $k\in \ZZ$ and consider the following abstract GKM$_3$ graph, oriented as in the picture.

\begin{center}
		\begin{tikzpicture}
		
		\usetikzlibrary{decorations.markings}
		
		\begin{scope}[very thick, decoration={markings, mark=at position 0.5 with {\arrow{>}}}]

			\node at (0,0)[circle,fill,inner sep=2pt] {};
			\node at (1,1)[circle,fill,inner sep=2pt]{};
			\node at (1,-1)[circle,fill,inner sep=2pt]{};
			\node at (3,1)[circle,fill,inner sep=2pt]{};
			\node at (3,-1)[circle,fill,inner sep=2pt]{};
			\node at (4,0)[circle,fill,inner sep=2pt]{};
			\draw[postaction={decorate}] (0,0)--++(1,1);
			\draw[postaction={decorate}] (3,1)--++(-2,0);
			\draw[postaction={decorate}] (4,0)--++(-1,1);
			\draw[postaction={decorate}] (4,0)--++(-1,-1);
			\draw[postaction={decorate}] (3,-1)--++(-2,0);
			\draw[postaction={decorate}] (0,0)--++(1,-1);
			\draw[postaction={decorate}] (1,1) -- ++(0,-2);
			\draw[postaction={decorate}] (3,1) -- ++(0,-2);
			
			\draw[postaction={decorate}] (4,0) --++ (2,0);

			\node at (6,0)[circle,fill,inner sep=2pt] {};
			\node at (7,1)[circle,fill,inner sep=2pt]{};
			\node at (7,-1)[circle,fill,inner sep=2pt]{};
			\node at (9,1)[circle,fill,inner sep=2pt]{};
			\node at (9,-1)[circle,fill,inner sep=2pt]{};
			\node at (10,0)[circle,fill,inner sep=2pt]{};
			\draw[postaction={decorate}] (6,0)--++(1,1);
			\draw[postaction={decorate}] (9,1)--++(-2,0);
			\draw[postaction={decorate}] (10,0)--++(-1,1);
			\draw[postaction={decorate}] (10,0)--++(-1,-1);
			\draw[postaction={decorate}] (9,-1)--++(-2,0);
			\draw[postaction={decorate}] (6,0)--++(1,-1);
			\draw[postaction={decorate}] (7,1) -- ++(0,-2);
			\draw[postaction={decorate}] (9,1) -- ++(0,-2);
			
			\draw[very thick] (0,0) --++ (-0.75,0);
			\draw[very thick, dashed, ->] (-1.5,0) --++ (0.75,0);
			\draw[very thick, ->] (10,0) --++ (0.75,0);
			\draw[very thick, dashed] (10.75,0) --++ (0.75,0);
			
			\node at (5,0.3) {$\alpha$ };
			\node at (8,1.3) {$\alpha$};
			\node at (8,-1.3) {$\alpha$};
			\node at (2,1.3) {$\alpha$};
			\node at (2,-1.3) {$\alpha$};
			\node at (-1,0.3) {$\alpha$};
			\node at (0.3,0.8) {$\beta$};
			\node at (0.3,-0.8) {$\gamma$};
			\node at (2,0) {$\gamma-\beta$};
			\node at (0.3,0.8) {$\beta$};
			\node at (0.3,-0.8) {$\gamma$};
			
			\node at (3.7,0.8) {$\beta$};
			\node at (3.7,-0.8) {$\gamma$};
			
			\node at (6,0.8) {$\beta+k\alpha$};
			\node at (6,-0.8) {$\gamma-k\alpha$};
			\node at (10.1,0.8) {$\beta+k\alpha$};
			\node at (10.1,-0.8) {$\gamma-k\alpha$};
			
			\node at (8,0) {\begin{tabular}{c}
			$\gamma-\beta$ \\ $-2k\alpha$
			\end{tabular}};

		\end{scope}

		\end{tikzpicture}
		
	\end{center}

Here we identify the right most with the left most horizontal edge.
The existence of a (unique) compatible connection can be checked for each edge independently. The magnitudes are $3$ for all edges involved in triangles and $2$ for the horizontal edges. However the line Chern numbers of the central and right most horizontal edges are $\pm k$.

We do not know whether this family of GKM graphs is in fact realized
by 6-dimensional almost complex manifolds. However observe some facts which are consistent
with the geometric side: The sum of all the magnitudes is 48. If this abstract
GKM$_3$ graph came from a $J$-preserving torus action with isolated fixed points
on a compact almost complex manifold $(M,J)$ of dimension 6, then the sum of all the
magnitudes would also be 48. Indeed, from the picture it is easy to see that, with respect
to this chosen orientation, the number $N_0$ of vertices with 0 entering edges is 2. From
\cite[Remark 2.10]{MR776689} we know that $\mathrm{Todd}(M)=N_0$, and in dimension 6 one has
$\mathrm{Todd}(M)=\frac{c_1c_2[M]}{24}$, yielding $c_1c_2[M]=24\,N_0=48$. Finally, from
\cite[Prop. 4.6]{MR3230015} it follows that $c_1c_2[M]$ is exactly the sum of all the magnitudes
of the edges of a multigraph associated to the action which, in this case, can be taken to
be exactly the GKM graph itself.

Finally, it is interesting to note that, after restricting the weights to a generic
hyperplane, the graphs can be realized by simply connected smooth (not necessarily almost
complex) GKM $T^2$-spaces (see \cite{2210.01856v1}). 
\end{ex}

\begin{ex}\label{ex: weird cube}
We give a family of examples in order to prove that Theorem \ref{thm: mainthm} is wrong if one assumes that the action is only GKM$_2$ and not GKM$_3$. Indeed, if we restrict the standard toric action on $M=(S^2)^3$ to a suitable rank $2$-subtorus, one can obtain the following
GKM graph, for an arbitrary $k\in \ZZ$:
\begin{center}
\begin{tikzpicture}
\node at (0,0)[circle,fill,inner sep=2pt] {};
\node at (2,0)[circle,fill,inner sep=2pt] {};
\node at (0,-2)[circle,fill,inner sep=2pt] {};
\node at (2,-2)[circle,fill,inner sep=2pt] {};

\node at (6,-2)[circle,fill,inner sep=2pt] {};
\node at (8,-2)[circle,fill,inner sep=2pt] {};
\node at (6,-4)[circle,fill,inner sep=2pt] {};
\node at (8,-4)[circle,fill,inner sep=2pt] {};

\draw[very thick] (0,0) --++ (2,0)--++(0,-2)--++(-2,0)--++(0,2);
\draw[very thick] (6,-2) --++ (2,0)--++(0,-2)--++(-2,0)--++(0,2);
\draw[very thick] (0,0) --++ (6,-2);
\draw[very thick] (2,0) --++ (6,-2);
\draw[very thick] (2,-2) --++ (6,-2);
\draw[very thick] (0,-2) --++ (6,-2);

\node at (-0.4,-1) {$\alpha$};
\node at (1,0.3) {$\beta$};
\node at (5,-0.5) {$\alpha+k\beta$};

\end{tikzpicture}
\end{center}
Here $\alpha,\beta$ is a basis of $\ZZ^2$ and parallel edges share the same label. The standard connection inherited from the toric example satisfies 
$c_e(f)=0$ for all distinct adjacent $e, f$. However one could also take the following ``non-toric connection'' along the edge $e$ of label $\beta$, where the edge $f$ labelled $\alpha$ is sent to the edge $f'$ labelled $\alpha+k\beta$, which gives $|c_e(f)|=|k|$. 

As shown in Lemma \ref{geom and comb connection}, the modified connection does in fact come from a different splitting of
$TM|_{S^2_e}$ into $T^2$-invariant line bundles. This splitting is however not respected
by the $T^3$-action (for which a choice of equivariant splitting is indeed unique).
\end{ex}

\section{Bounds on the moment map image}
\label{sec: max extensions}

The goal of this section is to use the previously established bounds on the line Chern
numbers to derive a bound on the size of the moment image. As it turns out this is not
possible in general, since one can achieve arbitrarily large moment map images, even up to
integral automorphisms, by restricting to subgroups (see Example \ref{ex: long weights}).
To rule out this phenomenon we will consider actions which are maximal in a certain sense.
To make this notion of maximality precise, one fixes the line Chern numbers and studies
the group of admissible axial functions, as it has been done by Kuroki in \cite{MR3943448}. In Section
\ref{subsec: max extensions} we describe the constructions and develop the terminology in
a way that is suited for our purpose; we stress however that the underlying ideas of
Section \ref{subsec: max extensions} appear in \cite{MR3943448}. In the subsequent Section \ref{subseq: box} we prove a theorem on the ``size of maximal extensions'' (Theorem \ref{thm: Box}) that leads to a bound of the moment map image for certain Hamiltonian actions (Corollary \ref{cor: box}).

\subsection{Construction of maximal extensions}\label{subsec: max extensions}

Consider a GKM graph $(\Gamma,\alpha)$, where $\alpha$ takes values in $\ZZ^k$. After
fixing a compatible connection $\nabla$, we obtain the associated line Chern numbers
$c_e(f)$. Conversely, assume we are given an $n$-valent graph $\Gamma=(V,E)$ together with
a family of bijections $\nabla=\{\nabla_e\colon E_{i(e)} \to E_{t(e)}\mid e\in E\}$
satisfying $\nabla_e(e)=\bar{e}$ and
$\nabla_{\overline{e}}=\nabla_e^{-1}$, and an integral function $c_*\colon (e,f)\mapsto
c_e(f)\in \ZZ$ which is defined for any pair of edges $e,f\in E_p$, for all $p\in V$.
The $c_e(f)$ are candidates for line Chern numbers and we will try to find a ``maximal axial function'' which is compatible
with the given $\nabla$ and has these integers as line Chern numbers.
\begin{rem}
 It makes sense to additionally impose the relations discussed in Remark \ref{line chern triples} for the $c_e(f)$. This does not enter directly in this section, but if the $c_e(f)$ do not satisfy these relations, then the axial function constructed below can never belong to a GKM graph with a fixed connection. 
\end{rem}

We define an axial function $\gamma$ associated to $(\Gamma,\alpha,\nabla)$ and $c_*$, and taking values in a free Abelian group, as follows: Consider $\ZZ^{E}$, the group of $\ZZ$ labels of the edges of $\Gamma$, regarded as the group of maps from $E$ to $\ZZ$. We note that, on the geometric side, an element of $\ZZ^{E}$ 
can be thought of as a family of (independent) weights associated to a circle action on the family of spheres corresponding to the elements of $E$. 

Now one can consider the subgroup $L=L(\Gamma,\nabla, c_*)$ of all such functions $\beta\colon E\rightarrow \ZZ$ satisfying the conditions
\begin{enumerate}
\item $\beta(e)=-\beta(\overline{e})$;
\item for any $p\in V$ and any $e,f\in E_p$ one has $$\beta(f)- \beta(\nabla_e(f))=c_e(f)\beta(e)\,.$$
\end{enumerate}

\begin{rem}\label{rem: system of eq}
We note that, for a fixed enumeration of the edges, one obtains an isomorphism $\ZZ^{E}\cong \ZZ^a$, where $a=|E|$, and $L$ is the kernel of a matrix $A\in \ZZ^{M\times a}$, where every row corresponds to one of the equations above. In particular,
for the rows corresponding to the equations of type (1), all of the entries are 0 except for two of them, which are $1$ and $-1$. For the rows
corresponding to the equations of type (2), all of the entries are 0 except for three of
them, which are $1$, $-1$ and the corresponding line Chern number.
\end{rem}

As a subgroup of a free $\ZZ$-module, $L$ is again a free $\ZZ$-module. Let $\gamma_1,\ldots,\gamma_m$ be a Basis of $L$ and consider the map $\gamma\colon E\rightarrow \ZZ^m$ defined by
\[\gamma(e):=(\gamma_1(e),\ldots,\gamma_m(e))\]
\begin{rem}\label{rem: transpose business}
Reformulated in terms of matrices, the relation between the chosen basis of $L$ and the resulting weights of the labelled graph $(\Gamma,\gamma)$ are as follows: Enumerating the edges via an isomorphism $\ZZ^E\cong \ZZ^a$, $a\in \NN$ and writing the basis of $L$ as the rows of a matrix $B\in \ZZ^{m\times a}$, then the weights will be the \emph{columns} of $B$.
\end{rem}

The next proposition and what follows are one of the main results and ideas in \cite{MR3943448} (see in particular Section 3).
\begin{prop}\label{prop: maxextension}
Let $(\Gamma, \alpha)$ be an abstract GKM graph, with axial function $\alpha\colon E\rightarrow
\ZZ^k$ and compatible connection $\nabla$. Then, with the above notation
and using the line Chern numbers of $(\Gamma,\alpha)$ for the
construction, the labelled graph
$(\Gamma,\gamma)$ is an abstract GKM graph and the connection $\nabla$ is compatible with the axial function $\gamma$.
Moreover the line Chern numbers of $(\Gamma,\gamma,\nabla)$ are the same as those of
$(\Gamma,\alpha,\nabla)$. 
Finally, there exists a linear map 
$\Phi\colon \ZZ^m\rightarrow \ZZ^k$ such that $\alpha(e) = \Phi(\gamma(e))$ for all $e\in E$.
\end{prop}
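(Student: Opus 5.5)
The proof will follow directly from the definition of $L=L(\Gamma,\nabla,c_*)$, built with the line Chern numbers of $(\Gamma,\alpha,\nabla)$, and from viewing $\alpha$ coordinatewise.

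\textbf{Step 1: the map $\Phi$.} Write $\alpha=(\alpha_1,\ldots,\alpha_k)$ with $\alpha_j\colon E\to\ZZ$. Each $\alpha_j$ satisfies $\alpha_j(\overline e)=-\alpha_j(e)$ and $\alpha_j(f)-\alpha_j(\nabla_e f)=c_e(f)\alpha_j(e)$. The latter is just condition (d) applied coordinatewise. Hence $\alpha_j\in L$, so $\alpha_j=\sum_i a_{ji}\gamma_i$ with $a_{ji}\in\ZZ$. Define $\Phi\colon\ZZ^m\to\ZZ^k$ by the integer matrix $(a_{ji})$. Then $\Phi(\gamma(e))=(\sum_i a_{ji}\gamma_i(e))_j=\alpha(e)$.

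\textbf{Step 2: compatibility and line Chern numbers of $\gamma$.} Each basis element $\gamma_i$ satisfies conditions (1) and (2) defining $L$. Taking all coordinates together gives
\[
\gamma(\overline e)=-\gamma(e)\quad\text{and}\quad \gamma(f)-\gamma(\nabla_e f)=c_e(f)\gamma(e).
\]
So $\nabla$ is compatible with $\gamma$, and property (ii) of Definition \ref{D: GKM graph} holds.

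The line Chern numbers of $(\Gamma,\gamma,\nabla)$ are the integers $c'_e(f)$ with $\gamma(f)-\gamma(\nabla_e f)=c'_e(f)\gamma(e)$. These are uniquely determined whenever $\gamma(e)\neq0$. Since $\gamma(e)$ satisfies the relation with coefficient $c_e(f)$, we get $c'_e(f)=c_e(f)$.

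\textbf{Step 3: pairwise linear independence.} This is the only point needing care. Let $e\neq f$ be in $E_p$, and suppose $a\gamma(e)+b\gamma(f)=0$ with $(a,b)\in\QQ^2$. Applying the linear map $\Phi\otimes\QQ$ gives $a\alpha(e)+b\alpha(f)=0$. Since $\alpha(e),\alpha(f)$ are linearly independent, $a=b=0$.

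In particular $\gamma(e)\neq0$ for every $e$, because $\alpha(e)\neq0$. This justifies the uniqueness claim used in Step 2. Hence $(\Gamma,\gamma)$ is an abstract GKM graph with compatible connection $\nabla$ and the same line Chern numbers, which proves the proposition.

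The main obstacle is mild: one must verify independence of the $\gamma$-labels, and this is handled by the factorization through $\Phi$. That is why Step 1 is carried out first.
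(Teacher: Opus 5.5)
Your proposal is correct and follows essentially the same route as the paper: the axioms for $\gamma$ hold by construction of $L$, the map $\Phi$ comes from writing each coordinate function $\alpha_i\in L$ in the basis $\gamma_1,\ldots,\gamma_m$, and pairwise linear independence of the $\gamma$-labels follows from that of the $\alpha$-labels via $\Phi$. Your remark that $\gamma(e)\neq 0$ makes the line Chern numbers of $(\Gamma,\gamma,\nabla)$ uniquely determined is a small detail that the paper leaves implicit.
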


\begin{proof}
The axioms
\begin{enumerate}
\item $\gamma(e)=-\gamma(\overline{e})$
\item for any $p\in V$ and any $e,f\in E_p$, one has $\gamma(f)- \gamma(\nabla_e(f))=c_e(f)\gamma(e)\,$
\end{enumerate}
are clearly satisfied by construction. In order for $(\Gamma,\gamma)$ to be a GKM graph with compatible connection $\nabla$ and the desired line Chern numbers, it only remains to prove that $\gamma(f)$ and $\gamma(f')$ are linearly independent, for every pair of edges $f,f'\in E_p$, for every $p\in V$.
Since this holds for $\alpha$, this will be a direct consequence of the existence of the linear map $\Phi$ as in the statement of the proposition. Denote by $\alpha_i\colon E\rightarrow \ZZ$ the composition of $\alpha$ with the projection $\ZZ^k\rightarrow \ZZ$ onto the $i$th component, for every $i=1,\ldots,k$. We define analogously the components $\gamma_j$ of $\gamma$, for every $j=1,\ldots,m$. 
We have $\alpha_i\in L$ with $L$ as defined above. Since the $\gamma_j$'s are a basis of $L$, we find $a_{i1},\ldots,a_{im}\in \ZZ$ such that
\[\alpha_i=\sum_{j=1}^m a_{ij} \gamma_j.\]

Hence the map $\Phi\colon \ZZ^m\rightarrow \ZZ^k$ defined by the matrix $(a_{ij})$ satisfies $\alpha(e)=\Phi(\gamma(e))$ for any $e\in E$.
\end{proof}

In the construction of $\gamma$ we chose a basis for the group $L$. A different choice of a basis yields an a priori different GKM graph; however,
the map $\Phi$ in Proposition \ref{prop: maxextension} gives an isomorphism between the two GKM graphs.
Hence by slight abuse of language we make the following

\begin{defn}\label{def comb maximal}
We call the GKM graph $(\Gamma,\gamma)$ constructed above the \emph{maximal extension} of
$(\Gamma,\alpha ,\nabla)$. If $(\Gamma,\alpha)$ is GKM$_3$, then by
the maximal extension of $(\Gamma,\alpha)$ we mean the maximal extension with respect to
the unique connection (cf.\ Remark \ref{rem: gkm3connection}).
\end{defn}

\begin{cor}\label{virtual iso and max extensions}
 Let $(\Gamma,\alpha)$ and $(\Gamma', \alpha')$ be two GKM graphs
with fixed connections $\nabla$ and $\nabla'$ respectively. Then they are virtually
isomorphic if and only if their maximal extensions are isomorphic as abstract GKM graphs
through an isomorphism which preserves the connections. If both graphs
are GKM$_3$, then this is the case if and only if their maximal extensions are
isomorphic.
\end{cor}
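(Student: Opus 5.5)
The plan is to prove both directions of the first statement by relating virtual isomorphisms to the subgroup $L=L(\Gamma,\nabla,c_*)$ of $\ZZ^E$. The GKM$_3$ refinement then follows from uniqueness of connections (Remark \ref{rem: gkm3connection}).

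\emph{($\Rightarrow$)} Let $\Phi\colon\Gamma\to\Gamma'$ be a virtual isomorphism. Precomposition with $\Phi$ gives an isomorphism $\Phi^*\colon \ZZ^{E'}\to\ZZ^E$, $\beta'\mapsto\beta'\circ\Phi$. First I check that $\Phi^*$ maps $L'=L(\Gamma',\nabla',c'_*)$ onto $L=L(\Gamma,\nabla,c_*)$.
\begin{itemize}
\item Condition (1) is preserved because $\Phi(\overline e)=\overline{\Phi(e)}$, as $\Phi$ is a graph isomorphism of directed edges.
\item Condition (2) is preserved because $\Phi(\nabla_e f)=\nabla'_{\Phi(e)}\Phi(f)$ and $c_e(f)=c'_{\Phi(e)}(\Phi(f))$.
\end{itemize}
The same argument applied to $\Phi^{-1}$ gives the reverse inclusion. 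Hence a basis $\gamma'_1,\dots,\gamma'_m$ of $L'$ is carried to the basis $\gamma'_j\circ\Phi$ of $L$. With this choice of basis the maximal extension of $(\Gamma,\alpha,\nabla)$ is $\gamma(e)=\gamma'(\Phi(e))$. By the remark after Proposition \ref{prop: maxextension}, any other choice of basis differs by an element of $\GL(m,\ZZ)$, so $\Phi$ together with this change of basis is an isomorphism of the maximal extensions. It preserves the connections by (i).

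\emph{($\Leftarrow$)} Suppose $\Phi$ together with $\Psi\in\GL(m,\ZZ)$ is an isomorphism of the maximal extensions preserving connections, so that $\gamma'(\Phi(e))=\Psi(\gamma(e))$. By Proposition \ref{prop: maxextension}, the line Chern numbers of $(\Gamma,\gamma,\nabla)$ and $(\Gamma,\alpha,\nabla)$ agree, and likewise for the primed graphs. Applying $\Psi$ to $\gamma(f)-\gamma(\nabla_ef)=c_e(f)\gamma(e)$ gives
\[
\gamma'(\Phi f)-\gamma'(\nabla'_{\Phi e}\Phi f)=c_e(f)\,\gamma'(\Phi e).
\]
On the other hand, $c'_{\Phi e}(\Phi f)$ is defined by the same relation. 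Since $\gamma'(\Phi e)\neq 0$ (it is linearly independent from the other labels at its vertex), the coefficient is unique, so $c_e(f)=c'_{\Phi e}(\Phi f)$. Thus $\Phi$ is a virtual isomorphism.

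\emph{GKM$_3$ case.} If both graphs are GKM$_3$, their maximal extensions are again GKM$_3$. The reason is that linear dependence of three labels of $\gamma$ at a vertex would be mapped by the linear map of Proposition \ref{prop: maxextension} to a linear dependence of the corresponding labels of $\alpha$. By Remark \ref{rem: gkm3connection} each maximal extension therefore has a unique compatible connection. An arbitrary isomorphism transports the compatible connection $\nabla$ to a connection compatible with $\gamma'$, and by uniqueness this must be $\nabla'$. So every isomorphism automatically preserves connections, and the statement reduces to the first part.

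The only delicate point I expect is the well-definedness issue: the maximal extension is defined only up to choice of basis. This is handled by the $\GL(m,\ZZ)$ remark above.
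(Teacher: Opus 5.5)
Your proposal is correct and follows the same route as the paper. The forward direction uses the fact that the lattice $L$ depends only on $(\Gamma,\nabla,c_*)$, so that $\Phi^*$ identifies $L'$ with $L$. The converse uses the fact that connection-preserving isomorphisms of the maximal extensions preserve line Chern numbers. The GKM$_3$ case comes from uniqueness of the connection, and you also supply the routine checks the paper only asserts, namely that the maximal extension inherits GKM$_3$ through the linear map of Proposition~\ref{prop: maxextension} and that the transported connection is compatible with $\gamma'$.
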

\begin{proof}
Since the construction of the maximal extension only takes into account the line Chern numbers, it is clear that virtually isomorphic GKM graphs yield maximal extensions that are isomorphic as abstract GKM graphs. 
Conversely, any GKM graph isomorphism between maximal extensions,
which preserves the connections, also preserves in particular the
line Chern numbers of the maximal extensions (see Definition \ref{D: Iso of GKM graphs})
and hence yields a virtual isomorphism between the original GKM graphs.
Finally if $(\Gamma,\alpha)$ and $(\Gamma',\alpha')$ are GKM$_3$, then
so are their maximal extensions. In particular any isomorphism between the maximal
extensions will automatically preserve the unique connection (see Remark \ref{rem:
gkm3connection}).
\end{proof}

\begin{defn}\label{def combinatorially maximal}
Let $(\Gamma,\alpha)$ be an abstract GKM graph with fixed connection $\nabla$ and axial function
$\alpha\colon E\rightarrow\ZZ^k$. Furthermore let $c_*\colon (e,f)\mapsto c_e(f)$ be the
associated line Chern numbers, defined for every $e,f\in E_p$, for every $p\in V$. Let
$L(\Gamma,\nabla,c_*)$ be the lattice as constructed above. Then $(\Gamma,\alpha,\nabla)$
is called \emph{combinatorially maximal} if the components $\alpha_1,\ldots,\alpha_k$ of
$\alpha$ are a basis of $L(\Gamma,\nabla,c_*)$. For a GKM$_3$ graph we
also say that $(\Gamma,\alpha)$ is combinatorially maximal if it is combinatorially
maximal with respect to its unique connection. A GKM$_3$ space is called combinatorially
maximal if its GKM graph is.
\end{defn}

Clearly an abstract GKM graph is combinatorially maximal if and only if it is isomorphic to its maximal extension.

\begin{rem}\label{rem: combmax}
\begin{enumerate}\item
If the abstract GKM graph comes from a GKM space, being combinatorially maximal is a property of the graph only.
However, it gives the following geometric obstruction, namely: 
The torus action on the manifold cannot be extended effectively to any larger torus. 
The converse is not obvious, i.e.\ there might exist manifolds with a GKM action that is
maximal from a geometric point of view, but not combinatorially maximal. (See
\cite[Corollary 1.3]{MR3943448} as well as \cite[Section 4]{MR3943448}.) 
\item (Effective) Toric actions are combinatorially maximal: Fix a vertex $p$ and let $e_1,\ldots,e_n$ be the edges emanating from $p$.  
In \cite[Lemma 3.2]{MR3943448} it is proved that for any vertex $p$, the restriction of the projection $\ZZ^{E}\rightarrow \ZZ^{E_p}$ to the sublattice $L(\Gamma,\nabla,c_*)\subset \ZZ^{E}$ is an injection.
Composing with \[\ZZ^{E_p}\cong \ZZ^n,\qquad\beta\mapsto (\beta(e_1),\ldots,\beta(e_n))\] 
we obtain an injection $\psi\colon L(\Gamma,\nabla,c_*)\rightarrow \ZZ^{E_p}\cong \ZZ^n$. For the components $\alpha_i\in L$ of $\alpha$ we have $\psi(\alpha_i)=(\alpha(e_1)_i,\ldots,\alpha(e_n)_i)$. Note that the latter are the rows of the matrix $A$ whose columns are given by $\alpha(e_1),\ldots,\alpha(e_n)$. As the action is assumed to be toric one has $A\in \GL(n,\ZZ)$.
Consequently, $\psi(\alpha_1),\ldots,\psi(\alpha_n)$ is a basis of $\ZZ^n$ and it follows that the $\alpha_i$'s are a basis of $L(\Gamma,\nabla,c_*)$.

\item The injectivity of the map $L(\Gamma,\nabla,c_*)\rightarrow \ZZ^{E_p}$ described
	above is indeed the key point to prove that, for any GKM graph, the rank of the group
	$L(\Gamma,\nabla,c_*)$ is bounded above by the valency of the graph, namely the rank of
	$\ZZ^{E_p}$ (see \cite[Lemma 3.2]{MR3943448}).
	\item
Recently in \cite{GoertschesSolomadin},
	Goertsches and Solomadin proved that for a GKM graph of a Hamiltonian GKM$_{4}$ manifold, the rank of $L(\Gamma,\nabla,c_*)$ takes the maximum value, i.e.\ it equals the valency of $\Gamma$. In the literature, graphs of this maximal rank are often referred to as \emph{torus graph}. This partly answered a question of Masuda's, who asked whether any GKM$_4$ graph always extends to a torus graph. This dichotomy between GKM$_3$ actions and the toric world was recently supported in \cite[Theorem 2]{KurokiSolomadin}, where it was shown that maximal extensions of the GKM graphs of homogeneous spaces $G/H$ (with $G$ compact semi simple and $H$ of maximal rank) are either GKM$_2$, GKM$_3$, or torus graphs. As mentioned in the introduction, it is in general unclear whether the maximal extension of the GKM graph comes from an extension of the action on the manifold. In this regard, we note that while there are many interesting examples of GKM$_3$ manifolds whose action cannot be extended (due to being combinatorially maximal), we are not aware of a Hamiltonian GKM$_4$ manifold where the action does not extend to a toric action.
	
\end{enumerate}
\end{rem}

\subsection{Bounds on the maximal extension}\label{subseq: box}
\begin{center}
\textit{``Shall I describe it to you? Or would you like me to find you a box?''}\\ \vspace*{0.3cm}
{\hspace{5cm} --- Legolas Greenleaf}
\end{center}

In Section \ref{sec: CLN bound} we bounded the line Chern numbers of the GKM graph of a compact positive monotone Hamiltonian
 GKM$_3$ space. In this section we derive, from this, a bound on the moment image. First we prove
\begin{thm}\label{thm: Box}
Let $(\Gamma,\alpha,\nabla)$ be an $n$-valent GKM graph with fixed connection $\nabla$ and $\chi$ vertices. Assume that
for all $e,f\in E_p$ and all $p\in V$, we have $|c_e(f)|\leq \beta$ for some $\beta\in
\mathbb{N}$. Then there exists a constant $K=K(\beta,n,\chi)$ depending only on $\beta$,
$n$, and $\chi$, and a choice of maximal extension $\tilde\alpha\colon E\rightarrow \ZZ^k$
of $(\Gamma,\alpha,\nabla)$, for which 
\begin{equation}\label{inf bound alpha}
\| \tilde\alpha(e) \|_\infty\leq K\quad \text{for all  } e\in E\,.
\end{equation} 
The constant $K$ is given explicitly by
\begin{equation}\label{def K}
K=K(\beta,n,\chi)=\sqrt{\frac{(n+3)n\chi(\beta^2+2)^{{n\chi}}}{8}}\,.
\end{equation}
\end{thm}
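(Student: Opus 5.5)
The plan is to find a basis of the lattice $L=L(\Gamma,\nabla,c_*)=\ker A\cap\ZZ^{a}$ consisting of short integer vectors. By Remark \ref{rem: transpose business}, the labels $\tilde\alpha(e)$ are then the columns of the matrix $B$ whose rows are these basis vectors. So $\|\tilde\alpha(e)\|_\infty\leq\max_j\|\gamma_j\|_\infty\leq\max_j\|\gamma_j\|_2$, and it suffices to bound the Euclidean lengths of a suitable basis of $L$.

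First I want to reduce the size of the linear system. Using the relations $\beta(\overline e)=-\beta(e)$, I keep one variable per undirected edge, giving $N=n\chi/2$ unknowns. The connection equations (type (2) in Remark \ref{rem: system of eq}) then have at most three nonzero entries, namely $\pm1,\pm1$ and $c_e(f)$, with $|c_e(f)|\leq\beta$. I therefore view $L$ as $\ker A'\cap\ZZ^{N}$ for an integer matrix $A'$ with these rows. Let $m=\rk L=N-\rk A'$; by Remark \ref{rem: combmax}(3), $m\leq n$.

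Next I apply Siegel's lemma in the form of Bombieri--Vaaler. Choose $r=\rk A'$ linearly independent rows, forming $A''$ with $\ker A''=\ker A'$. Bombieri--Vaaler then produces $m$ linearly independent integer vectors $x_1,\dots,x_m\in\ker A''$ with
\[\prod_{j=1}^m\|x_j\|_\infty\leq\frac{\sqrt{\det(A''A''^{T})}}{D},\]
where $D\geq1$ is the gcd of the maximal minors. By Hadamard's inequality, $\det(A''A''^{T})\leq\prod_i\|\text{row}_i\|_2^2\leq(\beta^2+2)^{r}\leq(\beta^2+2)^{n\chi}$. Since each $\|x_j\|_\infty\geq1$, every $x_j$ individually satisfies $\|x_j\|_\infty\leq(\beta^2+2)^{n\chi/2}$, and hence $\|x_j\|_2\leq\sqrt N(\beta^2+2)^{n\chi/2}$.

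The main obstacle is that the $x_j$ only span a finite-index sublattice of $L$, and I need an actual basis. To handle this I pass to a Korkine--Zolotarev reduced basis $b_1,\dots,b_m$ of $L$. The successive minima satisfy $\lambda_i(L)\leq\max_j\|x_j\|_2$, because the $x_j$ are independent vectors of $L$. The classical bound for KZ bases, $\|b_i\|_2^2\leq\frac{i+3}{4}\lambda_i^2$, then gives
\[\|b_i\|_2^2\leq\frac{m+3}{4}N(\beta^2+2)^{n\chi}\leq\frac{(n+3)\,n\chi\,(\beta^2+2)^{n\chi}}{8},\]
using $m\leq n$ and $N=n\chi/2$. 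This is exactly $K^2$ as defined in \eqref{def K}. Taking $\gamma_i=b_i$ as the basis in the construction of Proposition \ref{prop: maxextension} gives the desired maximal extension.

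The delicate points are the precise form of Bombieri--Vaaler (whether it uses the sup-norm or the Euclidean norm, and how much $D$ helps) and checking that the constants line up exactly with \eqref{def K}. If a factor is lost there, I would absorb it by using the cruder bound $r\leq n\chi$ in the Hadamard estimate.
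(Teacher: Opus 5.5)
Your proposal is correct and follows the paper's proof step by step. The chain is the same: Bombieri--Vaaler's version of Siegel's lemma together with Hadamard's inequality on rows of Euclidean norm at most $\sqrt{\beta^2+2}$, the resulting bound on $\lambda_m(L)$, a Korkine--Zolotarev basis via Lagarias--Lenstra--Schnorr, and $m\le n$ from Kuroki's injectivity lemma, which gives exactly the stated $K$.

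Eliminating the relations $\beta(\overline e)=-\beta(e)$, so that $N=n\chi/2$ unknowns remain, is a harmless refinement that in fact justifies the paper's count ``$a=\frac{n}{2}\chi$''. It is legitimate because for $f\neq e$ the edges $e,f,\nabla_e(f)$ are pairwise distinct as undirected edges ($E\subseteq V\times V$ excludes multiple edges), and the rows with $f=e$ vanish since $c_e(e)=2$.
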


\begin{rem}\label{rem: bound alpha}
\begin{itemize}
\item[(i)] We point out that the labels of a maximal extension are only well defined up to $\GL(k,\ZZ)$ transformations, which however do not preserve the $\|\cdot\|_\infty$ norm. 
Hence not every choice of maximal extension will satisfy the bound.
\item[(ii)] Secondly, obtaining such a bound without passing to the maximal extension is in general not possible, as it is shown by the example below.
\end{itemize}
\end{rem}

\begin{ex}\label{ex: long weights} Let $e_1,\ldots,e_n$ denote the standard basis of $\ZZ^n$.
Consider the $n$-dimensional cube in $\RR^n$ with vertices given at the points $(\pm 1, \ldots, \pm 1)$. This is exactly the moment map image
of the standard $T^n$ action on $(S^2)^n$, the latter endowed with symplectic form $\omega$ satisfying $c_1=[\omega]$. 
Its GKM graph corresponds to the union of edges and vertices of the cube above. Therefore, the weights at the fixed points
are exactly $(\pm e_1,\ldots, \pm e_n)$, depending on the vertex; here we identify $\ZZ^n$
with the dual lattice in $\mathfrak{t}^*$.
 
Let $L\in \ZZ^{(n-1)\times n}$ denote the matrix whose columns are the vectors
$e_1,\ldots,e_{n-1},m(e_1+\ldots+e_{n-1})$. Then the transpose $L^t$ defines an embedding
$T^{n-1}\rightarrow T^n$ which we use to restrict the $T^n$-action to a $T^{n-1}$ action.
The induced map on the dual Lie algebras is given by the matrix $L$ and the image of the
moment map of the restricted action is the projection of the cube under $L$. The labels
of edges emanating from a vertex become $e_1,\ldots,e_{n-1}$ and $m\sum_i e_i$
(with appropriate signs).

We claim that there is no automorphism of $\ZZ^{n-1}$ which maps the
weights of the $T^{n-1}$-action into the Euclidean ball of radius $C_m=\sqrt[n-1]{m}$. In
particular, for $m\to\infty$ we observe that a bound on the line Chern numbers,
which are always $0$ when using the standard connection, does not
enable a bound on the weights up to automorphism.
 
Let $A\in \SL(\ZZ,n-1)$. For $I\in \ZZ^{n\times (n-1)}$ with columns given by
$e_2,\ldots,e_n$ we have $\det(ALI)=\det(A)\det(LI)=\pm m$. It follows from Hadamards
inequality \cite{hadamard93, MR2978290} that there needs to be a column of $ALI$ of Euclidean norm $\geq C_m$. Since
these are also columns of $AL$, the claim holds.
\end{ex}

For the proof of Theorem \ref{thm: Box} we use two ingredients. The first is a number
theoretic principle generally known as Siegel's Lemma. The version below is an improved
version due to Bombieri and Vaaler (see \cite[Theorem 2]{MR707346}), which relies on techniques from the geometry of numbers.

\begin{thm}\label{thm: siegel}
Let $A\in \ZZ^{M\times N}$ with $N>M$. Let $D$ be the greatest common divisor among all $M\times M$ minors of $A$. Then there are $N-M$ linearly independent integral $x_i\in \ker A$ satisfying
\[\prod_{i=1}^{N-M}\|x_i\|_\infty\leq D^{-1}\sqrt{\det (AA^T)}.\]
\end{thm}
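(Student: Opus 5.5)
The plan is a geometry-of-numbers argument: Minkowski's second theorem applied to the lattice of integral kernel vectors, with the cube $[-1,1]^N$ as the convex body. We may assume $A$ has rank $M$. Otherwise every $M\times M$ minor vanishes, $D=0$, and the statement is vacuous (or must be read by first removing dependent rows).

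\textbf{Setup.} Let $V=\ker A\subset\RR^N$, a real subspace of dimension $d:=N-M$. Let $\Lambda:=V\cap\ZZ^N$, a lattice of full rank $d$ in $V$, since $V$ is a rational subspace. Let $\mathcal K:=V\cap[-1,1]^N$, a symmetric convex body in $V$. The $\|\cdot\|_\infty$-norm on $V$ is exactly the gauge of $\mathcal K$.

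\textbf{Step 1: successive minima.} Let $\lambda_1\le\dots\le\lambda_d$ be the successive minima of $\mathcal K$ with respect to $\Lambda$. By definition there are linearly independent $x_1,\dots,x_d\in\Lambda$ with $\|x_i\|_\infty=\lambda_i$. Minkowski's second theorem gives
\[\lambda_1\cdots\lambda_d\,\mathrm{vol}_d(\mathcal K)\le 2^d\det\Lambda .\]
The claim therefore reduces to two facts:
\[\mathrm{vol}_d(\mathcal K)\ge 2^d \qquad\text{and}\qquad \det\Lambda=D^{-1}\sqrt{\det(AA^T)}.\]

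\textbf{Step 2: the covolume identity.} Let $W=V^\perp$, the row space of $A$. For a primitive sublattice, the covolume of $V\cap\ZZ^N$ equals the covolume of the complementary primitive lattice $W\cap\ZZ^N$; this follows from the duality between $\ZZ^N$ and its dual, together with the fact that $\ZZ^N$ is unimodular.

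The lattice $A^T\ZZ^M$ spanned by the rows of $A$ has covolume $\sqrt{\det(AA^T)}$ (Gram determinant). It is a sublattice of $W\cap\ZZ^N$ of some finite index. That index equals the gcd $D$ of the maximal minors: put $A^T$ in Smith normal form, $A^T=U\,\mathrm{diag}(d_1,\dots,d_M)\,U'$. Then the index is $d_1\cdots d_M$, which is exactly the gcd of the $M\times M$ minors. Dividing gives $\det\Lambda=\sqrt{\det(AA^T)}/D$.

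\textbf{Step 3: the volume bound, which is the main obstacle.} The inequality $\mathrm{vol}_d(V\cap[-1,1]^N)\ge 2^d$ is Vaaler's cube-slicing theorem: every $d$-dimensional central section of the unit cube $[-\tfrac12,\tfrac12]^N$ has $d$-volume at least $1$.

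To prove it I would follow Vaaler's argument. Write the section volume as a limit of integrals of the Gaussian-smoothed indicator $\prod_j\mathbf 1_{[-1/2,1/2]}(\langle u_j,y\rangle)$ over $y\in\RR^d$, where $u_j\in\RR^d$ are the rows of an isometric parametrization $\RR^d\to V$. Then use the log-concavity and peakedness properties of the uniform distribution on $[-\tfrac12,\tfrac12]$ relative to the Gaussian, via a majorization/Schur-convexity argument in the weights $\|u_j\|^2$, which satisfy $\sum_j\|u_j\|^2=d$ and $\|u_j\|\le 1$. This shows the section volume is minimized when the section is a coordinate subspace, where it equals $1$.

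This is the genuinely deep input. In the paper I would cite it, since Theorem \ref{thm: siegel} is quoted from \cite{MR707346} and relies on exactly this estimate.

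\textbf{Conclusion.} Combining the three steps,
\[\prod_{i=1}^{N-M}\|x_i\|_\infty=\lambda_1\cdots\lambda_d\le \frac{2^d\det\Lambda}{\mathrm{vol}_d(\mathcal K)}\le \det\Lambda=D^{-1}\sqrt{\det(AA^T)},\]
as claimed.
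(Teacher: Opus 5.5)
Your proposal is correct and follows the Bombieri--Vaaler argument: Minkowski's second theorem for $\ZZ^N\cap\ker A$ with respect to $\ker A\cap[-1,1]^N$, Vaaler's cube-slicing bound $\mathrm{vol}_{N-M}(\ker A\cap[-1,1]^N)\geq 2^{N-M}$, and the identity $\det(\ZZ^N\cap\ker A)=\det(\ZZ^N\cap(\ker A)^\perp)=D^{-1}\sqrt{\det(AA^T)}$ obtained via Smith normal form. The paper does not prove this itself; it only quotes it as \cite[Theorem 2]{MR707346}, so citing Vaaler's inequality rather than reproving it is appropriate, and that matters because your sketch of its proof is impressionistic (Vaaler's argument rests on Kanter's peakedness comparison of the uniform measure on $[-\tfrac12,\tfrac12]^N$ with the Gaussian $e^{-\pi|x|^2}$, not on a Schur-convexity argument).
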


The second ingredient is the theory behind ``short bases'' of lattices. We review some
basic facts: Let $L\subset \RR^k$ be a lattice of rank $r$, for some $r\leq k$. Given a
basis $v_1,\ldots,v_r$ of $L$, we write the $v_i$ as the columns of a matrix $A\in
\RR^{k\times r}$. The determinant of the lattice is the number $\sqrt{\det(A^T A)}$, which
is independent of the choice of basis and hence an invariant of the lattice. We denote
this by $\det(L)$ and observe that it agrees with the volume of the fundamental
paralleliped of the lattice. Note that $A^{T}A = (\langle v_{i}, v_{j} \rangle)_{ij}$,
where $\langle \cdot, \cdot \rangle$ denotes the standard inner product of
$\mathbb{R}^{k}$. From Hadamard's inequality \cite[Theorem 7.8.1]{MR2978290} we have

\begin{equation}\label{eq: Hadamard}
	\det(L)\leq \sqrt{\prod_{i=1}^{r} \langle v_{i}, v_{i} \rangle} \leq\prod_{i=1}^r \|v_i\|_2,
\end{equation}

When it comes to finding short bases of lattices, an important invariant are
\emph{Minkowski's successive minima} $\lambda_1(L),\ldots,\lambda_r(L)$, where
$\lambda_i(L)$ is the minimal $\lambda\in \RR$ such that the closed Euclidean ball
$B_\lambda(0)$ contains $i$ vectors in $L$ that are $\QQ$-linearly independent (For the
original reference, see \cite{MR249269}). So in particular 
there are $r$ $\QQ$-linearly independent vectors of $L$ 
of Euclidean length $\leq \lambda_r(L)$. It is however not true that such a collection of
vectors is necessarily a $\ZZ$-basis of $L$ (they are a $\QQ$-basis of $L\otimes \QQ$). In
fact, starting in dimension $\geq 5$ there exist full rank lattices such that
$B_{\lambda_r(L)}(0)$ does not contain a $\ZZ$-basis of $L$ (see \cite[Sect. 5]{MR82513}).
Nonetheless lattice reduction techniques allow computations of bases that are short in an
appropriate (and very much non unique) sense. For our purposes we use the concept of
\emph{Korkine-Zolotarev bases}, as introduced in \cite{MR1509828}. These
types of bases, which exist for every lattice, are closely related to
Minkowski's successive minima and their norm can be bounded in terms of those. The
fundamental result that we use in this paper is due to Lagarias-Lenstra-Schnorr and is the
following

\begin{thm}[\cite{MR1099248}, Theorem 2.1]\label{thm: lattice reduction}
Let $L\subset \RR^k$ be a lattice of rank $r$. Then a Korkine-Zolotarev basis
$b_1,\ldots,b_r$ satisfies the following
property: For each $1\leq i\leq r$ 
\[\sqrt{\frac{4}{i+3}}\lambda_i(L)\leq \|b_i\|_2\leq \sqrt{\frac{i+3}{4}}\lambda_i(L).\]
\end{thm}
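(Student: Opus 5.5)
This result is quoted from \cite{MR1099248}. If we wanted a self-contained argument, the plan would be to prove both inequalities directly from the recursive definition of a Korkine-Zolotarev basis together with Gram--Schmidt orthogonalization. Write $b_1^*,\ldots,b_r^*$ for the Gram--Schmidt vectors of $b_1,\ldots,b_r$, and $\pi_j$ for the orthogonal projection onto $\mathrm{span}(b_1,\ldots,b_{j-1})^\perp$, so that $b_j^*=\pi_j(b_j)$. Write $b_i=b_i^*+\sum_{j<i}\mu_{ij}b_j^*$. The KZ conditions I would use are:
\begin{itemize}
\item[(a)] $b_j^*$ is a shortest nonzero vector of the projected lattice $\pi_j(L)$;
\item[(b)] the basis is size reduced, i.e.\ $|\mu_{ij}|\leq \tfrac12$.
\end{itemize}
Orthogonality of the $b_j^*$ then gives the basic estimate
\[\|b_i\|_2^2\leq \|b_i^*\|_2^2+\tfrac14\sum_{j<i}\|b_j^*\|_2^2 .\]

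\emph{Upper bound.} I first claim $\|b_j^*\|_2\leq\lambda_i(L)$ for every $j\leq i$. Choose $i$ linearly independent lattice vectors of length at most $\lambda_i(L)$. Their span has dimension $i>j-1$, so it is not contained in $\mathrm{span}(b_1,\ldots,b_{j-1})$. Hence one of them, say $v$, has $\pi_j(v)\neq 0$. By (a) we get $\|b_j^*\|_2\leq\|\pi_j(v)\|_2\leq\|v\|_2\leq\lambda_i(L)$. Inserting this into the basic estimate gives $\|b_i\|_2^2\leq(1+\tfrac{i-1}{4})\lambda_i(L)^2=\tfrac{i+3}{4}\lambda_i(L)^2$.

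\emph{Lower bound.} Since $b_1,\ldots,b_i$ are linearly independent, $\lambda_i(L)\leq\max_{j\leq i}\|b_j\|_2$. So it suffices to show $\|b_j\|_2^2\leq\tfrac{i+3}{4}\|b_i\|_2^2$ for all $j\leq i$. For $k\leq i$, the vector $\pi_k(b_i)$ is a nonzero element of $\pi_k(L)$, because $b_i\notin\mathrm{span}(b_1,\ldots,b_{k-1})$. Hence (a) gives $\|b_k^*\|_2\leq\|\pi_k(b_i)\|_2\leq\|b_i\|_2$. The basic estimate then yields $\|b_j\|_2^2\leq(1+\tfrac{j-1}{4})\|b_i\|_2^2\leq\tfrac{i+3}{4}\|b_i\|_2^2$, which is the claim.

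The main point needing care is the setup rather than the estimates: the definition of a KZ basis has to be fixed precisely enough to guarantee both (a) and (b), and one needs that such a basis exists. I would prove existence by induction on the rank. Take $b_1$ a shortest vector of $L$, take a KZ basis of $\pi_2(L)$, lift each of its vectors to $L$, and subtract integer multiples of earlier basis vectors to enforce $|\mu_{ij}|\leq\tfrac12$. After that, both inequalities are the two short computations above.
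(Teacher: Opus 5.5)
Your proposal is correct, and it is essentially the standard Lagarias--Lenstra--Schnorr argument from \cite{MR1099248}. The upper bound comes from $\|b_j^*\|_2\le\lambda_i(L)$ for $j\le i$ combined with size reduction; the lower bound comes from $\|b_j^*\|_2\le\|\pi_j(b_i)\|_2\le\|b_i\|_2$, which gives $\|b_j\|_2^2\le\frac{j+3}{4}\|b_i\|_2^2$, followed by $\lambda_i(L)\le\max_{j\le i}\|b_j\|_2$. The paper itself gives no proof and only quotes the result from that source.
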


In particular, since $\lambda_i(L)\leq \lambda_{i+1}(L)$, one obtains a bound on the longest vector in a basis of $L$ in terms of $\lambda_r(L)$ and $r$.

\begin{proof}[Proof of Theorem \ref{thm: Box}]
We fix an identification $\ZZ^{E}\cong \ZZ^a$ by enumerating the edges. Let $A\in
\ZZ^{M\times a}$ be the matrix as constructed in \ref{rem: system of eq} and let $L=\ker
A\subset \ZZ^a$. A maximal extension of the original GKM graph was constructed in Section
\ref{subsec: max extensions} by a choice of basis of $L$. In particular, if we find a
basis of $L$ such that all $\infty$-norms of the basis vectors are bounded by a constant
$K$, then this constant also bounds the $\infty$-norms of all labels of the maximal
extension (see Remark \ref{rem: transpose business}). 

Therefore it remains to prove that we can find such a constant $K$ which only depends on
$\beta$, $n$ and the number of edges $a$, which is indeed $\frac{n}{2}\chi$. Let
$\Lambda\subset \ZZ^a$ be the lattice spanned by the rows of $A$. As a first step we
derive a bound on $\det(\Lambda)$. Let $v_1,\ldots,v_N$ be a basis of $\Lambda$ and set
$A'\in \ZZ^{N\times a}$ to be the matrix with rows $v_1,\ldots,v_N$. We note that $\ker
A'=\ker A=L$ and that $N< a$ since $L$ has positive rank. Choose any maximal subset of
linearly independent rows of $A$ and denote the resulting matrix by $\overline{A}$. By
construction, each row can be written as an integral linear combination of the $v_i$.
Hence we find $B\in \ZZ^{N \times N}$ such that $\overline{A} = B A'$. Furthermore $B$ has
nonzero determinant since $\overline{A}$ has full rank. Hence \[\sqrt{\det(
\overline{A}\cdot\overline{A}^T)}=|\det(B)|\sqrt{\det(A'(A')^T)}=|\det B| \det(\Lambda).\]
In particular since $B$ is integral we have $\det(\Lambda)\leq \sqrt{\det(
\overline{A}\cdot\overline{A}^T)}$. The rows of $A$ have Euclidean norm bounded by
$\beta_1:=\sqrt{\beta^2+2}$: indeed, they contain only $0$ entries except one line Chern
number and two entries $\pm 1$ (see Remark \ref{rem: system of eq}). It follows from
Hadarmard's inequality \cite[Corollary 7.8.3]{MR2978290} that
$$\sqrt{\det( \overline{A}\cdot\overline{A}^T)}\leq \beta_1^N$$ since the left hand side
is the volume of a fundamental domain of the lattice spanned by the columns of
$\overline{A}$.  Hence
\[\det(\Lambda) \leq \beta_1^N\leq \beta_1^a=:\beta_2.\]
Now by Theorem \ref{thm: siegel} applied to $A'$ (and replacing $D^{-1}$ in the theorem by $1$) we find a linearly independent set of integral vectors $x_{1},\ldots,x_{a-N}\in \ker A'=L$ such that
\[\prod_{i=1}^{a-N} \|x_i\|_\infty\leq \det(\Lambda).\]
Set $m:= a-N=\rk (L)$. Since the $x_i$ are integral we individually obtain $\|x_i\|_\infty\leq \det(\Lambda)$ and thus $\|x_i\|_2\leq \sqrt{a}\det(\Lambda)$. As the $x_i$ form a maximal linearly independent set in $L$, we deduce
\[\lambda_m(L)\leq \sqrt{a}\det(\Lambda).\]
Now by Theorem \ref{thm: lattice reduction} we may choose a basis $\gamma_1,\ldots,\gamma_m$ for $L$ with
\[\|\gamma_i\|_2\leq \sqrt{\frac{m+3}{4}}\lambda_m(L)\leq  \sqrt{\frac{(m+3)a}{4}} \det(\Lambda)\leq  \sqrt{\frac{(n+3)a}{4}}\beta_2 \]
where in the last inequality we have used that $m=\rk(L)$ is bounded above by the valency $n$ of $\Gamma$ (see Remark \ref{rem: combmax}). Therefore we can define the desired bound to be
\begin{equation}\label{def K1}
K=K(\beta,n,\chi):=\sqrt{\frac{(n+3)a}{4}}\beta_2=\sqrt{\frac{(n+3)n\chi(\beta^2+2)^{{n\chi}}}{8}}\,.
\end{equation}

\end{proof}

\begin{rem}\label{rem: Knchi}
Recall that by Corollary \ref{global bound} for a compact monotone Hamiltonian GKM$_3$ space of dimension $2n$ and Euler characteristic $\chi$, the line Chern numbers satisfy
\begin{equation} 
\label{def beta n}
 |c_e(f)|\leq \beta(n,\chi):= \begin{cases} 
24\chi & \text{if  }n=3\\
(n-3)\chi \cdot\frac{n(n+1)^2}{2}+2 & \text{if  } n>3
\end{cases}\nonumber
\end{equation}
Inserting $\beta=\beta(n,\chi)$ into $K(\beta,n,\chi)$ from \eqref{def K} we obtain a constant $K(n,\chi)$ only depending on $n$ and $\chi$.
\end{rem}

Consider now a compact $2n$-dimensional symplectic manifold, endowed with a Hamiltonian
action of $T=T^k$ with moment map $\mu\colon M \to \mathfrak{t}^*\cong
\RR^k$. 
We define the \emph{diameter} $d_\mu$ of the moment map $\mu$ as being the
$\|\cdot\|_\infty$-length of the longest segment connecting $\mu(p)$ to $\mu(q)$ in
$\mu(M)\subset \RR^k$, for all $p,q\in M$, namely 
$$
d_\mu:=\max\{\|\mu(p)-\mu(q)\|_\infty, \;p,q,\in M\}\,.
$$

\begin{cor}\label{cor: box}
Let $(M,\omega,\mu)$ be a compact positive monotone Hamiltonian $\text{GKM}_3$ space of dimension $2n$ and Euler characteristic $\chi$.
Assume that $c_1=[\omega]$ and that the action is combinatorially maximal.
Then there exist a constant $K'=K'(n,\chi)$, explicitly given by Equation \eqref{bound
diameter}, such that, after potentially precomposing the action by an automorphism of
$T$, we have
\begin{equation}\label{diameter}
d_\mu\leq K'(n,\chi)\,.
\end{equation}
\end{cor}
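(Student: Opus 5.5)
By combinatorial maximality, the weights $\alpha\colon E\to\ZZ^k$ of $(M,\omega,\mu)$ already form a maximal extension of $(\Gamma,\alpha,\nabla)$. Any other maximal extension, in particular the one provided by Theorem \ref{thm: Box}, differs from $\alpha$ by a $\GL(k,\ZZ)$ transformation, since both come from bases of the same lattice $L(\Gamma,\nabla,c_*)$. A transformation in $\GL(k,\ZZ)$ of the weight lattice $\ZZ^k_{\mathfrak t}\!^*$ is dual to an automorphism of $T$. The plan is therefore the following. We precompose the action by this automorphism, so that the weights become $\tilde\alpha$ with $\|\tilde\alpha(e)\|_\infty\le K(\beta(n,\chi),n,\chi)=:K(n,\chi)$ for all $e\in E$; here we use the bound $|c_e(f)|\le\beta(n,\chi)$ from Corollary \ref{global bound} and Remark \ref{rem: Knchi}. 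The moment map transforms accordingly, and the weight sum formula \eqref{weight sum formula} still holds for the new weights.

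Next we bound the moment image. By the convexity theorem, $\mu(M)$ is the convex hull of $\mu(M^T)$. Since $\|\cdot\|_\infty$ is a norm, the maximum of $\|\mu(p)-\mu(q)\|_\infty$ over $M$ is attained at fixed points. We have two options for estimating differences at fixed points.

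\begin{itemize}
\item[(a)] Use the weight sum formula directly: $\mu(p)=-\sum_{f\in E_p}\tilde\alpha(f)$, so $\|\mu(p)\|_\infty\le nK$ and hence $d_\mu\le 2nK$.
\item[(b)] Alternatively, connect $p$ and $q$ by a path in the connected graph $\Gamma$ and sum the edge vectors $\mu(t(e))-\mu(i(e))=m(e)\tilde\alpha(e)$ from \eqref{eq:edge vector}, using $m(e)<C(n,\mathbf b)\le\chi n(n+1)^2/2$.
\end{itemize}

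Option (a) is cleaner and gives $K'(n,\chi):=2n\,K(\beta(n,\chi),n,\chi)$, which we take as the explicit value in \eqref{bound diameter}. We should double check that the normalization \eqref{weight sum formula} is compatible with the automorphism. This holds because the formula is linear: both sides transform by the same dual map, and the monotonicity normalization of $\mu$ is preserved.

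The only delicate point is the identification in the first step. We must verify that combinatorial maximality means the components of $\alpha$ form a basis of $L$, so that $\alpha$ and $\tilde\alpha$ are related by an invertible integral matrix $\Psi$ with $\tilde\alpha=\Psi\circ\alpha$. We must also check that such a $\Psi$ is realized by an automorphism of $T$ acting on $\mathfrak t^*$ compatibly with $\mu$. Once this is in place, the remaining estimates are routine. For the asymptotic claim, we plug $\beta\approx n^3\chi/2$ into \eqref{def K}. This gives $(\beta^2+2)^{n\chi/2}\approx(n^3\chi/2)^{n\chi}$ up to polynomial factors, which is of the stated order $P(n,\chi)(n^4\chi/2)^{n\chi}$.
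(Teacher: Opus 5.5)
Your proof is correct, and from the point where $\alpha$ is replaced by $\tilde\alpha$ onward it takes a different and more efficient route than the paper.

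\textbf{Common first half.} You bound the line Chern numbers via Corollary \ref{global bound}. You take a short maximal extension $\tilde\alpha$ from Theorem \ref{thm: Box}. You then use combinatorial maximality to replace $\alpha$ by $\tilde\alpha$ after an automorphism of $T$. This matches the paper.

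\textbf{The paper's second half.} The paper bounds each edge image separately, $\|\mu(q)-\mu(p)\|_\infty=m(e)\|\alpha(e)\|_\infty\le \chi\frac{n(n+1)^2}{2}K$, using the magnitude bound $m(e)<C(n,\mathbf b)$. It then multiplies by a bound $D_\Gamma\le\frac{3\chi}{n+1}-1$ on the combinatorial diameter of $\Gamma$. This is essentially your option (b), and it produces $K'=\left(\frac{3\chi}{n+1}-1\right)\chi\frac{n(n+1)^2}{2}K$.

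\textbf{Your option (a).} You locate every vertex directly through the weight sum formula $\mu(p)=-\sum_{f\in E_p}\tilde\alpha(f)$. In normalization-free form this reads $\mu(p)-\mu(q)=c_1^T(q)-c_1^T(p)$. Hence $\mu(M^T)$, and by convexity $\mu(M)$, lies in the box $\{\|x\|_\infty\le nK\}$, so $d_\mu\le 2nK$. This needs neither magnitudes nor any bound on the graph diameter. It improves the constant by a factor of roughly $\frac34\chi^2(n+1)$.

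\textbf{Relation to the stated constant.} A Hamiltonian action with isolated fixed points has $b_{2j}\ge 1$ for all $j$, so $\chi\ge n+1$. Then $2n\le n(n+1)^3$, which is at most the paper's coefficient $\left(\frac{3\chi}{n+1}-1\right)\chi\frac{n(n+1)^2}{2}$. So your argument also proves the corollary with the constant exactly as given in \eqref{bound diameter}.

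\textbf{What each route buys.} The edge-by-edge argument would be natural in a setting where only the relation \eqref{eq:edge vector} with bounded $m(e)$ is available. Here, however, that relation is itself derived from the weight sum formula, so nothing is lost by using the formula directly.

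\textbf{A minor slip in your asymptotic remark.} The bound $\beta(n,\chi)$ grows like $n^4\chi/2$, not $n^3\chi/2$, because it is governed by the lower bound $(3-n)\chi\frac{n(n+1)^2}{2}-2$ in Corollary \ref{global bound}. With the correct growth, one checks $\beta^2+2<\frac{\chi^2n^8}{4}$ for $n\ge 4$. This gives $(\beta^2+2)^{n\chi/2}<(n^4\chi/2)^{n\chi}$, so the order $P(n,\chi)\left(\frac{n^4\chi}{2}\right)^{n\chi}$ you conclude is still right. That asymptotic claim is in any case not part of this corollary.
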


\begin{proof}
Let $(\Gamma,\alpha)$ be the GKM graph of $(M,\omega,\mu)$ with $\Gamma=(E,V)$. Then, since the symplectic manifold is positive monotone, 
Theorem \ref{thm: mainthm} gives a constant $\beta$ such that for any adjacent $e,f\in E$ we have $|c_e(f)|\leq \beta$
 
From Theorem \ref{thm: Box} and Remark \ref{rem: Knchi} we infer that there is a constant $K=K(\beta,n,\chi)=K(n,\chi)$ such that 
the weights of the a maximal extension $(\Gamma,\tilde{\alpha})$ satisfy
\begin{equation}\label{bound alpha }
\|\tilde\alpha(e)\|_\infty\leq K(n,\chi)\quad \text{for all  } e\in E\,.
\end{equation}
As the $T$-action is assumed to be combinatorially maximal, there is an isomorphism $(\Gamma,\alpha)\cong (\Gamma,\tilde{\alpha})$ of GKM-graphs. Changing the original action by a suitable automorphism of $T$, we may assume $\alpha=\tilde{\alpha}$.

Let $p,q\in M^T$ be such that there exists an edge $e$ from $p$ to $q$.
By the weight sum formula \eqref{weight sum formula} and Lemma \ref{magnitude and c1}
 we have that 
\begin{equation}\label{mu alpha m}
\mu(q)-\mu(p)=m(e)\alpha(e)
\end{equation}
which, together with \eqref{upper bound ms}, \eqref{bound C and chi} and \eqref{bound alpha } imply that
\begin{equation}\label{bound mu pq}
\|\mu(q)-\mu(p)\|_\infty=m(e)\|\alpha(e)\|_\infty\leq \chi\cdot\frac{n(n+1)^2}{2} K(n,\chi)\,.
\end{equation}
Having achieved a bound on the length of the moment image of a single edge, it remains to
bound the (combinatorial) diameter of the graph, i.e.\ the smallest number $D_\Gamma$ such
that any vertex is connected to any other through a path with at most $D_\Gamma$ edges.
Such an upper bound is provided by the number $\frac{n\chi}{2}$ of edges or, more
optimally, by \cite[Theorem 1]{MR1007715} in the form of
\[D_\Gamma\leq \frac{3\chi}{n+1}-1.\]

Combining this with \eqref{bound mu pq} we infer that for any $p,q\in M^T$
\begin{equation}\label{bound diameter}
\|\mu(p)-\mu(q)\|_\infty\leq \left(\frac{3\chi}{n+1}-1\right)\chi\cdot\frac{n(n+1)^2}{2} K(n,\chi)=: K'(n,\chi).
\end{equation}
Convexity of the moment image implies that this bound actually holds for all $p,q\in M$.
\end{proof}

\begin{rem}\label{rem better bound}

We observe that the constant $K(n,\chi)$ in \eqref{bound diameter} was defined in \eqref{def K}, where by Corollary \ref{global bound} one can take 
\[\beta = \chi\cdot \frac{(n-3)n(n+1)^2}{2}+2\]
starting from $n\geq 4$.

Therefore we obtain
$$
\|\mu(p)-\mu(q)\|_\infty\leq \left(\frac{3\chi}{n+1}-1\right)\chi\frac{n(n+1)^2}{2} \sqrt{\frac{(n+3)n\chi(\beta^2+2)^{n\chi}}{8}}\,.
$$
One can simplify this e.g.\ by checking that for $n\geq 4$ we have $\beta< \chi \frac{n^4}{2}-1$ and consequently $\beta^2+2< \frac{\chi^2 n^8}{4}$. Hence there is a polynomial $P(n,\chi)$ such that $K'(n,\chi)$ in Corollary \ref{cor: box} can be taken as

\[K'(n,\chi)= P(n,\chi)\cdot\left(\frac{ n^4\chi}{2}\right)^{n\chi}\]

\end{rem}

\section{Chern numbers and cobordism types}

\subsection{On the (equivariant) cobordism type}
The following lemma is key to define the combinatorial analogue of equivariant Chern numbers.
\begin{lemma}\label{ABBV integers}
Let $(\Gamma,\alpha)$ be an abstract GKM graph that is $n$-valent and $f$ a class in
$\mathcal{H}^*_T(\Gamma, \alpha;\ZZ)$ of degree $2l$. If $l\geq n$ then
\end{lemma}
\begin{equation}\label{localization}
f_*:=\sum_{v\in V} \frac{f(v)}{\prod_{e\in E_v}\alpha(e)}\in H^{2(l-n)}(BT;\ZZ)\,.
\end{equation}
\begin{proof}
The above result is proved in \cite[Theorem 2.2]{MR1701922} 
for complex coefficients. However the proof applies \emph{ad verbatim} to integral coefficients.
\end{proof}
We note that the compatibility condition \eqref{def ce(f)} satisfied by the weights
 at the endpoints of an edge $e$ ensures that the map
 \begin{equation}\label{comb chern}
 V \ni v\mapsto \sigma_i(\alpha_1,\ldots,\alpha_n)\in H^{2i}(BT;\ZZ)
 \end{equation}
 satisfies the compatibility condition in Definition \ref{D: graph cohomology}, where $\sigma_i$ denotes the $i$-th elementary symmetric polynomial, and 
 $\alpha_1,\ldots,\alpha_n$ are the labels of the edges with initial point given by $v$. Therefore the class defined in \eqref{comb chern} belongs to 
 $\mathcal{H}^{2i}_T(\Gamma, \alpha;\ZZ)$; we refer to it as the \emph{combinatorial equivariant $i$-th Chern class of }$(\Gamma,\alpha)$ and denote it by $c^T_i$, for all $i=0,\ldots,n$.
 
For any $m$-tuple of integers $i_1,\ldots,i_m\geq 1$ satisfying $\sum_j i_j\geq n$, using \eqref{localization} it is also possible to define
 the \emph{combinatorial equivariant Chern number} $c^T_{i_1,\ldots,i_m}[\Gamma]$ as 
 \begin{equation}\label{comb chern numbers}
 c^T_{i_1,\ldots,i_m}[\Gamma]:=(c^T_{i_1}\cdots c^T_{i_m})_*\,\in H^{2(\sum_j  i_j-n)}(BT;\ZZ)
 \end{equation}

Note that if $\sum_j i_j =n $, then $c^T_{i_1,\ldots,i_m}[\Gamma]\in H^0(BT)=\mathbb{Z}$ is an integer. In this case we also write $c_{i_1,\ldots,i_m}[\Gamma]$ and call it the \emph{combinatorial Chern number}.
The above definitions are inspired by the geometric counterparts. Indeed,
if $(\Gamma,\alpha)$ is the GKM graph of a GKM space $(M,J,T)$, then
\begin{itemize}
\item[(a)] Given a class $f\in H^*_T(M;\ZZ)$ and its restriction $f(v)$ at the fixed points $v\in M^T$,  
the polynomial in \eqref{localization} corresponds exactly to the evaluation of $f$ on the
fundamental cycle $[M]$, namely $f[M]$; alternatively, when using the
	Cartan model for 
equivariant cohomology, this can be described as integration over $M$; 
\item[(b)] The combinatorial equivariant $i$-th Chern class is exactly the restriction to the fixed points
of the equivariant $i$-th Chern class of the (equivariant) tangent bundle of $M$;
\item[(c)] By (a) and (b) the combinatorial equivariant Chern numbers of $(\Gamma,\alpha)$ are the equivariant Chern numbers of $(M,J,T)$.
We denote the latter by $ c^T_{i_1,\ldots,i_m}[M]$, for any $m$-tuple of integers $i_1,\ldots,i_m\geq 1$ satisfying $\sum_j i_j\geq n$.
\end{itemize}
 In the rest of the section the coefficient ring of $H^*(BT)$ is assumed to be always $\ZZ$.
\begin{lem}\label{lem: restrict chern numbers}
Let $T^k$ be a $k$-dimensional compact torus and
let $(\Gamma,\alpha)$ and $(\Gamma',\alpha')$ be abstract GKM graphs with labels in $H^2(BT^r)$ and $H^2(BT^k)$ respectively. Assume that there is a graph isomorphism $\varphi\colon \Gamma\rightarrow \Gamma'$ between them together with a linear map $\Phi\colon H^2(BT^r)\rightarrow H^2(BT^k)$ satisfying $\alpha'(\varphi(e))=\Phi(\alpha(e))$ for any $e\in E$. Then the multiplicative extension $\Phi\colon H^*(BT^r)\rightarrow H^*(BT^k)$ maps the combinatorial equivariant Chern numbers of $(\Gamma,\alpha)$ to those of $(\Gamma',\alpha')$.
\end{lem}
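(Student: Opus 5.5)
The plan is to check directly that the ring map $\Phi$ commutes with every step in the definition of the combinatorial equivariant Chern numbers. The one subtle point is the division by the Euler class in \eqref{localization}, which takes place in a fraction field.

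First I set up notation. Identify $H^*(BT^r)\cong\ZZ[x_1,\ldots,x_r]$ and $H^*(BT^k)\cong\ZZ[y_1,\ldots,y_k]$. The linear map $\Phi$ on degree-two classes extends uniquely to a graded ring homomorphism $\Phi\colon H^*(BT^r)\to H^*(BT^k)$. For each vertex $v$, the isomorphism $\varphi$ sends $E_v$ bijectively to $E'_{\varphi(v)}$, and $\alpha'(\varphi(e))=\Phi(\alpha(e))$. It follows that the elementary symmetric polynomials satisfy
\[
\Phi\big(\sigma_i(\alpha(e))_{e\in E_v}\big)=\sigma_i\big(\alpha'(e')\big)_{e'\in E'_{\varphi(v)}},
\]
that is, $\Phi(c^T_i(v))=c'^T_i(\varphi(v))$. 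By multiplicativity, $\Phi\big((c^T_{i_1}\cdots c^T_{i_m})(v)\big)=(c'^T_{i_1}\cdots c'^T_{i_m})(\varphi(v))$. The equivariant Euler classes match in the same way: $\Phi\big(\prod_{e\in E_v}\alpha(e)\big)=\prod_{e'\in E'_{\varphi(v)}}\alpha'(e')$.

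The main obstacle is that the sum in \eqref{localization} is a sum of fractions. $\Phi$ need not be injective, and need not extend to the fraction fields. To avoid fractions, multiply through by the common denominator
\[
P:=\prod_{v\in V}\prod_{e\in E_v}\alpha(e).
\]
Lemma \ref{ABBV integers} gives $f_*\in H^*(BT^r)$, so we obtain the polynomial identity
\[
P\cdot f_* \;=\; \sum_{v\in V} f(v)\prod_{w\neq v}\prod_{e\in E_w}\alpha(e)
\]
in $H^*(BT^r)$. This identity involves no division, so we may apply the ring homomorphism $\Phi$ to both sides. By the vertexwise compatibilities above and the bijectivity of $\varphi$ on vertices, we get
\[
\Phi(P)\,\Phi(f_*) = P'\, f'_* ,
\]
where $P'$ is the analogous product for $(\Gamma',\alpha')$, and $f'=c'^T_{i_1}\cdots c'^T_{i_m}$ is a class in $\mathcal{H}^*_T(\Gamma',\alpha')$, so Lemma \ref{ABBV integers} applies to it as well. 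Since $\Phi(P)=P'$, this reads $P'\big(\Phi(f_*)-f'_*\big)=0$.

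Finally, $(\Gamma',\alpha')$ is an abstract GKM graph, so every label $\alpha'(e')$ is nonzero; this follows from the pairwise linear independence at each vertex when the valency is at least $2$, and is part of the axial function axioms in general. Therefore $P'$ is a nonzero element of the integral domain $\ZZ[y_1,\ldots,y_k]$. Cancelling it gives $\Phi(f_*)=f'_*$, that is, $\Phi\big(c^T_{i_1,\ldots,i_m}[\Gamma]\big)=c^T_{i_1,\ldots,i_m}[\Gamma']$. In the case $\sum_j i_j=n$ these classes lie in degree $0$, where $\Phi$ is the identity on $\ZZ$. So in particular the ordinary combinatorial Chern numbers agree.
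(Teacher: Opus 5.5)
Your proof is correct and follows the same route as the paper, which simply declares the lemma an immediate consequence of \eqref{localization}, \eqref{comb chern} and \eqref{comb chern numbers}. Clearing denominators and cancelling $P'$ in the domain $H^*(BT^k)$ supplies exactly the detail hidden in ``immediate'' when $\Phi$ is not injective; the nonvanishing of $P'$ is in any case forced by \eqref{localization} being defined for $(\Gamma',\alpha')$.
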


\begin{proof}
This is an immediate consequence of \eqref{localization}, the definition of (combinatorial) equivariant Chern classes \eqref{comb chern}, and \eqref{comb chern numbers}.
\end{proof}

\begin{defn}\label{defn: virtually equiv chern number}
Let $(M,J,T^k)$ and $(N,J',T^l)$ be $n$-dimensional almost complex manifolds with compatible actions of the tori $T^k$ and $T^l$. We say that $(M,J,T^k)$ and $(N,J',T^l)$ have 
\emph{virtually equivalent equivariant Chern numbers} if there exists $r\geq k,l$, homomorphisms $\iota_1\colon T^k\rightarrow T^r$, $\iota_2\colon T^l \to T^r$, and for any $i_1+\ldots+i_m\geq n$ a polynomial 
$c^T_{i_1,\ldots,i_m}\in H^*(BT^r)$  such that
\[c^T_{i_1,\ldots,i_m}[M] = \iota_1^*(c^T_{i_1,\ldots,i_m})\quad \text{and}\quad
c^T_{i_1,\ldots,i_m}[N]=\iota_2^*(c^T_{i_1,\ldots,i_m}).\]

\end{defn}

\begin{prop}\label{fin many equiv Chern numbers}
For fixed $\chi,n\in \NN$, up to virtual equivalence there are only finitely
many collections of polynomials that can arise as the
collection of equivariant Chern numbers
of a compact positive monotone Hamiltonian GKM$_3$ space of dimension $2n$ and Euler characteristic $\chi$.
\end{prop}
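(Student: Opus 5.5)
The plan is to pass through maximal extensions and use their finiteness. Let $(M,\omega,\mu)$ be a compact positive monotone Hamiltonian GKM$_3$ space of dimension $2n$ and Euler characteristic $\chi$, with GKM graph $(\Gamma,\alpha)$, labels in $H^2(BT^k)\cong\ZZ^k$, and its unique connection $\nabla$ (Remark \ref{rem: gkm3connection}). Let $(\Gamma,\gamma)$ be its maximal extension, with labels in $\ZZ^m\cong H^2(BT^m)$. By Proposition \ref{prop: maxextension} there is a linear map $\Phi\colon \ZZ^m\to\ZZ^k$ with $\alpha=\Phi\circ\gamma$. This map is induced by a homomorphism $\iota\colon T^k\to T^m$ with $\iota^*=\Phi$ on $H^2$; a linear map between character lattices is always dual to a homomorphism of tori.

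By Lemma \ref{lem: restrict chern numbers}, applied with $\varphi=\mathrm{id}$, the multiplicative extension $\iota^*$ carries every combinatorial equivariant Chern number $c^T_{i_1,\ldots,i_m}[\Gamma,\gamma]\in H^*(BT^m)$ to $c^T_{i_1,\ldots,i_m}[\Gamma,\alpha]$. By item (c) after \eqref{comb chern numbers}, the latter equals $c^T_{i_1,\ldots,i_m}[M]$. Hence the whole collection of equivariant Chern numbers of $M$ is the pullback of the collection attached to the maximal extension.

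Next I invoke Corollary \ref{cor: main} together with Corollary \ref{virtual iso and max extensions}. Together they show that, as $M$ ranges over all such spaces, only finitely many maximal extensions occur up to isomorphism of abstract GKM graphs. Since the rank is at most $n$ (Remark \ref{rem: combmax}), we may pad all of them to a common rank $r=n$, or more generally $r\ge k$, by composing with coordinate inclusions. The remaining point is that an isomorphism $(\Gamma,\gamma)\cong(\Gamma',\gamma')$ with $\Psi\in\GL(m,\ZZ)$ only changes the Chern number polynomials by $\Psi$, again by Lemma \ref{lem: restrict chern numbers}. That change is absorbed into the homomorphism $\iota$.

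So each $M$ has a collection of equivariant Chern numbers that is the pullback under some homomorphism $T^k\to T^r$ of one of finitely many fixed collections of polynomials. If two spaces $M$ and $N$ come from the same representative, Definition \ref{defn: virtually equiv chern number} is satisfied with the common $T^r$ and polynomials $c^T_{i_1,\ldots,i_m}$; we also need $r\ge k,l$. Thus there are finitely many virtual equivalence classes.

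The main technical obstacle is the requirement $r\ge k,l$ in Definition \ref{defn: virtually equiv chern number}. To meet it, I will use effectiveness: if the $T^k$-action is effective, the weights span a full-rank sublattice, which forces $\Phi$ to be surjective over $\QQ$ and hence $k\le m$. For a fixed class I can then take $r$ to be the maximum of the ranks involved, which is at most $n$.
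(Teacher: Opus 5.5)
Your proposal is correct and follows essentially the same route as the paper: you get finiteness of isomorphism types of maximal extensions from Corollaries \ref{cor: main} and \ref{virtual iso and max extensions}, realize the projections $\Phi$ and the automorphism $\Psi$ by torus homomorphisms, and apply Lemma \ref{lem: restrict chern numbers} along each arrow. Your effectiveness argument for $r\ge k,l$ makes explicit what the paper encodes by calling $\Phi_M,\Phi_N$ surjective, and the padding to rank $n$ is unnecessary, since isomorphic maximal extensions already have the same rank.
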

\begin{proof}
By Corollary \ref{cor: main} we only need to prove that virtually isomorphic GKM graphs have virtually equivalent equivariant Chern numbers. Let $M$ be a $T^k$-manifold and $N$ be a $T^l$-manifold with virtually isomorphic GKM graphs $(\Gamma_M,\alpha_M)$ and $(\Gamma_N,\alpha_N)$. 
From Proposition \ref{prop: maxextension} we get that the maximal extensions
$(\Gamma_M,\gamma_M)$ and $(\Gamma_N,\gamma_N)$ are isomorphic. Since both have labels in
a free Abelian group of some rank $r$, we may identify the latter with $H^2(BT^r)$. We obtain
projections of GKM graphs as in the requirements of Lemma \ref{lem:
restrict chern numbers}
\[(\Gamma_M,\alpha_M)\xleftarrow{(\mathrm{id}_{\Gamma_M},\Phi_M)} (\Gamma_M,\gamma_M)\xrightarrow{(\varphi,\Psi)} (\Gamma_N,\gamma_N)\xrightarrow{(\mathrm{id}_{\Gamma_N},\Phi_N)} (\Gamma_N,\alpha_N)\]
where $\Phi_M\colon H^2(BT^r)\rightarrow H^2(BT^k)$, $\Phi_N\colon H^2(BT^r)\rightarrow H^2(BT^l)$ are (surjective) linear maps, $\Psi$ is an automorphism of $H^2(BT^r)$ and $\varphi$ is some graph isomorphism $\Gamma_M\rightarrow \Gamma_N$. We point out that any linear map such as $\Phi_M$ is indeed induced by a homomorphism $T^k\rightarrow T^r$. Now the virtual equivalence of the equivariant Chern numbers of $M$ and $N$ follows from applying Lemma \ref{lem: restrict chern numbers} to all three of the above arrows.
\end{proof}

\begin{thm}\label{thm: bordism} 
$\;$\\\vspace{-0.3cm}
\begin{itemize}[leftmargin=*]
\item[(i)] For every $\chi,n\in \NN$, there are finitely many complex cobordism classes of compact positive monotone symplectic manifolds of dimension $2n$ and Euler characteristic $\chi$ admitting a Hamiltonian GKM$_3$ action. 
\item[(ii)] For every $\chi,n\in \NN$, up to pulling back actions along automorphisms of $T$, there are finitely many $T$-equivariant complex bordism classes of compact positive monotone symplectic manifolds of dimension $2n$ and Euler characteristic $\chi$ with a combinatorially maximal Hamiltonian GKM$_3$ action of $T$.
\end{itemize}
\end{thm}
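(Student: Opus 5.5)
For (i) I will combine Corollary \ref{cor: main} with the classical fact (Milnor--Novikov) that the complex cobordism class of a stably almost complex manifold is determined by its Chern numbers $c_{i_1,\ldots,i_m}[M]$ with $i_1+\ldots+i_m=n$. So it suffices to show that, for fixed $n$ and $\chi$, only finitely many collections of ordinary Chern numbers occur. Proposition \ref{fin many equiv Chern numbers} already gives finiteness of equivariant Chern numbers up to virtual equivalence. When $\sum_j i_j=n$, the equivariant Chern number lies in $H^0(BT^r)=\ZZ$. Pulling back along $\iota_1^*$ or $\iota_2^*$ is the identity in degree $0$, and by (c) the result equals the ordinary Chern number $c_{i_1,\ldots,i_m}[M]$. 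Hence virtually isomorphic GKM graphs yield identical ordinary Chern numbers, and these numbers are determined by the maximal extension. Concretely, in the proof of Proposition \ref{fin many equiv Chern numbers} I apply Lemma \ref{lem: restrict chern numbers} to the degree-zero part. By Corollary \ref{cor: main} there are finitely many virtual isomorphism classes, hence finitely many collections of Chern numbers, and therefore finitely many cobordism classes. The case $n\le 2$ is covered by Remark \ref{rem open close toric}, and for $n\le 2$ the GKM$_3$ condition amounts to being toric, where finiteness is classical.

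For (ii) I use the analogous statement in equivariant complex bordism. For torus actions, the $T$-equivariant complex bordism class of a stably complex $T$-manifold with isolated fixed points is determined by its equivariant Chern numbers, or equivalently by the fixed point data. More precisely, the tom Dieck / Hattori result says that the map from $\Omega^U_T$ to $\Omega^U\otimes$ (equivariant Chern numbers) is injective, and for isolated fixed points the class is determined by the collection of tangent weights at the fixed points, by localization. The plan is as follows. Combinatorial maximality means the GKM graph $(\Gamma,\alpha)$ is isomorphic to its maximal extension. By Corollaries \ref{cor: main} and \ref{virtual iso and max extensions}, there are only finitely many isomorphism classes of such maximal extensions. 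The isomorphism involves a $\Psi\in\GL(r,\ZZ)$, which can be realized by pulling back the action along an automorphism of $T$. After this normalization, only finitely many labelled GKM graphs remain, and hence only finitely many multisets of fixed-point weight data $\{(\alpha(e))_{e\in E_v}\}_{v\in V}$. Then I invoke the fact that equivariant unitary bordism classes with isolated fixed points are determined by these weight data. For tori this is due to tom Dieck's injectivity $\Omega^U_T\to \Omega^U_*[[\ldots]]$ after localization, together with the fact that the localized image depends only on the tangential representations at fixed points.

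The main obstacle is justifying this last input, namely that fixed-point data, or equivariant Chern numbers, determine the equivariant bordism class. Equivariant Chern numbers alone (with $\ZZ$-coefficients in $H^*(BT)$) may not suffice in general. I would therefore cite tom Dieck's theorem that the Boardman-type map $\Omega^U_T\to MU_*[[u_\alpha]]$ is injective for tori, and use the localization formula expressing its image purely through the weights at fixed points. In (i) the analogous input (Milnor--Novikov via Chern numbers) is standard, so the argument there reduces directly to the finiteness already established.
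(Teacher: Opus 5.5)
Your proposal is correct and follows the paper's proof essentially step by step. For (i), degree-zero equivariant Chern numbers are unchanged under virtual equivalence (Proposition \ref{fin many equiv Chern numbers}), and Milnor's theorem finishes the argument. For (ii), combinatorial maximality together with Corollaries \ref{cor: main} and \ref{virtual iso and max extensions} leaves finitely many labelled graphs after pulling back along automorphisms of $T$. The only difference is the final input in (ii): you conclude via tom Dieck's injectivity and localization, from identical fixed-point weight data, whereas the paper concludes from identical equivariant Chern numbers via \cite[Theorem H.4]{MR1929136}. Since the weights coincide after normalization, either input suffices.
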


\begin{proof}
In order to prove $(i)$ note that, for a multi index summing up to the dimension of the manifold, the equivariant
Chern number of a $T$-manifold will be an element of $H^0(BT)\cong \ZZ$ and, under this
isomorphism, is equal to the non-equivariant Chern number. Since any homomorphism
$T^k\rightarrow T^r$ induces the identity $\ZZ\cong H^0(BT^r)\rightarrow H^0(BT^k)\cong
\ZZ$, it follows that, if two manifolds with torus actions $M$ and $N$ have virtually
equivalent equivariant Chern numbers in the sense of Definition \ref{defn: virtually equiv
chern number}, then they have identical non-equivariant Chern numbers. Therefore Proposition
\ref{fin many equiv Chern numbers} implies that only finitely many Chern numbers occur. Now part $(i)$ follows from
results of Milnor \cite{MR119209}. 

To prove part $(ii)$ we observe that Corollaries \ref{cor: main} and \ref{virtual iso and max extensions}
imply that there are
 only finitely many isomorphism types of maximal
extensions of the corresponding GKM graphs. Since the actions in $(ii)$ are assumed to be
combinatorially maximal, the GKM graphs of the manifolds in question indeed belong to one
of these finitely many isomorphism types. Given two GKM $T$-manifolds with isomorphic GKM
graphs, there is a linear automorphism of $H^2(BT)$ transforming the weights of one action
into the weights of the other. This determines an automorphism $\phi$ of $T$ such that,
after pulling back one of the actions along $\phi$, the weights of the actions are indeed
the same (after identifying the underlying graphs in suitable fashion). In particular the
associated equivariant Chern numbers are identical. Now $(ii)$ follows from 
\cite[Theorem H.4]{MR1929136}.
\end{proof}

\begin{ex} \label{ex: eqbordism}
Part $(ii)$ of Theorem \ref{thm: bordism} is indeed false when the condition on the actions being combinatorially maximal is dropped. This is due to the fact that restricting a single $T^n$-action to different $k$-dimensional subtori can produce $T^k$-manifolds which are not equivariantly bordant, even up to automorphisms of $T^k$. We give a concrete example: Consider $M=\CC P^4$ together with the standard action whose moment polytope (w.r.t. a suitable moment map) is the convex hull of $0$ and the standard basis vectors $w,x,y,z$ of $\mathfrak{t}^*\cong \RR^4$. Then one computes the equivariant Chern number
\[ c_{4,2}[M]=\sum_{p\in M^T} c_2(p) = 6(w^2+x^2+y^2+z^2)-3(wx+wy+wz+xy+xz+yz).\]
We view this quadratic form $q$ as
\[q=\begin{pmatrix}
w&x&y&z
\end{pmatrix} \frac{3}{2}\begin{pmatrix}
4 & -1& -1 & -1\\ -1& 4& -1& -1\\ -1& -1& 4& -1\\ -1 & -1& -1& 4
\end{pmatrix}
\begin{pmatrix}
w\\x\\y\\z
\end{pmatrix}.\]
We denote the middle matrix (including the scalar) by $M_q$. For some $k\in \ZZ$, we pull back the action along the homomorphism $\phi_k\colon T^3\rightarrow T^4$ determined by the matrix
\[A_k=\begin{pmatrix}
1 & 0 & 0\\ 0 & 1 &0\\ 0&0&1\\ 1 & 1 & k
\end{pmatrix}\]
and note that the result will be a Hamiltonian GKM$_3$-action when $k\neq 0$ (with weights at the origin given by the rows of $A_k$).
The induced map $\varphi_k\colon H^*(BT^4)\cong \ZZ[w,x,y,z]\rightarrow \ZZ[x,y,z]\cong H^*(BT^3)$ is determined by its values on $H^2(BT^4)$ where it is given by the matrix $A_k^t$ with respect to the bases $w,x,y,z$ and $x,y,z$.
The equivariant Chern number $c_{4,2}$ is natural with respect to the restriction along
$\varphi_{k}$ and transforms into
\[\varphi_k(q)=\begin{pmatrix}
x&y&z
\end{pmatrix}
M_{\varphi(q)}
\begin{pmatrix}
x\\y\\z
\end{pmatrix}\]
with \[M_{\varphi_k(q)}= A_k^t M_q A_k = \frac{3}{2}\begin{pmatrix}
6 & 1 & 3k-2\\ 1 & 6 & 3k-2\\ 3k-2& 3k-2 & 4k^2- 2k +4
\end{pmatrix}.\]
We claim that the $\varphi_k(q)$ are pairwise distinct, even up to automorphism of $H^*(BT^3)$, for infinitely many values of $k$. This then indeed shows that the resulting $T^3$-manifolds lie in pairwise distinct equivariant bordism classes even up to automorphisms of $T^3$. To prove the claim we note that pulling back $\varphi(q)$ along an automorphism of $H^*(BT^3)$ will yield a polynomial of the form
\[\varphi_k(q)=\begin{pmatrix}
x&y&z
\end{pmatrix}
B^t
 M_{\varphi(q)}
B
\begin{pmatrix}
x\\y\\z
\end{pmatrix}\]
for some $B\in \GL(3,\ZZ)$. Hence the determinant of the symmetric matrix which represents the quadratic forms is invariant under automorphisms of $H^*(BT^3)$. However $\det M_{\varphi(q)}$
takes infinitely many distinct values when varying $k$.
\end{ex}

We conclude this section with a remark about reflexive GKM graphs. 

We recall that a \emph{reflexive GKM graph} is the GKM graph of a compact positive monotone Hamiltonian GKM space $(M,\omega,\psi)$ (with $c_1=[\omega]$),
see \cite[Definition 5.13 and Proposition 5.15]{MR3695881}. These graphs can be visualized in $\mathfrak{t}^*$ using the moment map $\mu$ (see also Remark \ref{R: Existence of GKM graph}); 
the image of the GKM graph in $\mathfrak{t}^*$ is denoted by $\mu(\Gamma)$.
Note that here we do not insist that the moment map is injective on the fixed point set $M^T$. 

As a natural generalization, we define a \emph{reflexive GKM$_k$ graph} to be the GKM graph of a compact positive monotone Hamiltonian GKM$_k$ space $(M,\omega,\psi)$ (with $c_1=[\omega]$).

The proof of Theorem \ref{thm: bordism} (ii) leads immediately to the following
\begin{cor}\label{cor reflexive graphs}
For each $n,\chi\in\NN$, there are finitely many reflexive GKM$_3$ graphs with $\chi$ vertices that are $n$-valent and combinatorially maximal.
\end{cor}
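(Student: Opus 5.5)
The plan is to follow the first half of the proof of Theorem \ref{thm: bordism} (ii). We show that a combinatorially maximal reflexive GKM$_3$ graph with $\chi$ vertices and valency $n$ is determined, up to isomorphism of GKM graphs in the sense of Definition \ref{D: Iso of GKM graphs}, by finitely many data. Such a graph is by definition the GKM graph $(\Gamma,\alpha)$ of some compact positive monotone Hamiltonian GKM$_3$ space $(M,\omega,\mu)$ of dimension $2n$. Since $\chi$ equals the number of fixed points, it is also the Euler characteristic of $M$.

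First, by Corollary \ref{cor: main}, the graphs $(\Gamma,\alpha)$ arising in this way fall into finitely many virtual isomorphism classes with respect to their unique connections (Remark \ref{rem: gkm3connection}). Second, by Corollary \ref{virtual iso and max extensions}, virtually isomorphic GKM$_3$ graphs have isomorphic maximal extensions. Hence only finitely many isomorphism types of maximal extensions $(\Gamma,\gamma)$ occur. Third, combinatorial maximality means that $(\Gamma,\alpha)$ is isomorphic to its own maximal extension, as noted right after Definition \ref{def combinatorially maximal}. So each graph in question is isomorphic to one of these finitely many representatives, which gives finiteness up to isomorphism.

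The main subtlety is what ``reflexive graph'' should mean if one wants it to include the embedding $\mu(\Gamma)\subset\mathfrak t^*$, and not merely the abstract labelled graph. I will treat this through the weight sum formula \eqref{weight sum formula}, which gives $\mu(p)=-\sum_{e\in E_p}\alpha(e)$. The position of every vertex is therefore determined by the labels. An isomorphism of labelled graphs given by $\Psi\in\GL(r,\ZZ)$ then carries $\mu(\Gamma)$ to $\mu'(\Gamma')$ via the dual linear map. Thus finiteness of the abstract labelled graphs up to isomorphism also yields finiteness of the embedded reflexive graphs up to $\GL(r,\ZZ)$. Because the moment map is normalized by \eqref{weight sum formula}, no translation ambiguity arises. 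This is the step I would check most carefully; beyond it the argument is a direct chaining of the cited corollaries.
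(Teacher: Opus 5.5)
Your proposal is correct and is essentially the paper's argument. The paper derives the corollary from the proof of Theorem \ref{thm: bordism} (ii), which is exactly the chain Corollary \ref{cor: main} $\Rightarrow$ Corollary \ref{virtual iso and max extensions} $\Rightarrow$ combinatorial maximality that you use. Your extra paragraph on the embedded graph $\mu(\Gamma)$ via the weight sum formula is a harmless refinement that the paper does not spell out. One small slip there: the labels already live in $\ZZ_{\mathfrak t}^*$, so $\mu(\Gamma)$ is carried to $\mu'(\Gamma')$ by $\Psi$ itself, extended $\RR$-linearly to $\mathfrak t^*$, not by its dual.
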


\subsection{A quantitative bound for the Chern numbers}
Our methods also lead to a quantitative bound of the Chern numbers of a positive monotone Hamiltonian GKM$_3$ space. This is achieved through the following

\begin{prop}\label{prop: chernbound}
Let $(\Gamma,\alpha)$ be an $n$-valent GKM graph with $\chi$ vertices and edge set $E$, where the axial function $\alpha\colon E\rightarrow \ZZ^k$ satisfies $\|\alpha(e)\|_\infty \leq K$, for some $K>0$ and all $e\in E$. 
Let $c^T_{i_1,\ldots,i_m}[\Gamma]\in  H^{0}(BT)=\ZZ$ be the combinatorial Chern number corresponding to the decomposition $n=i_1+\ldots+i_m$. Then we have
\[|c^T_{i_1,\ldots,i_m}[\Gamma]|\leq\chi\cdot (k\,K (K+1)^{k-1})^n\prod_{j=1}^m {n\choose i_j}.\]
\end{prop}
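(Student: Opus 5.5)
The plan is to exploit that the combinatorial Chern number for a decomposition $n=i_1+\ldots+i_m$ is an element of $H^0(BT)=\ZZ$. By Lemma \ref{ABBV integers}, the localization sum
\[
\sum_{v\in V}\frac{\prod_{j=1}^m \sigma_{i_j}\big(\alpha(e)\,:\,e\in E_v\big)}{\prod_{e\in E_v}\alpha(e)}
\]
is a rational function in $x_1,\ldots,x_k$ which is in fact a constant polynomial, equal to $c^T_{i_1,\ldots,i_m}[\Gamma]$. We may therefore evaluate it at any integral point $\xi\in\ZZ^k$ at which no label vanishes, that is, with $\alpha(e)(\xi)\neq 0$ for all $e\in E$. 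The value obtained is the Chern number itself. The whole task thus reduces to choosing $\xi$ so that every $|\alpha(e)(\xi)|$ is at least $1$ but not too large.

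The key step is the choice
\[
\xi=(1,\,K+1,\,(K+1)^2,\,\ldots,\,(K+1)^{k-1}).
\]
Write $\alpha(e)=(a_1,\ldots,a_k)$ with $|a_i|\leq K$; this vector is nonzero since labels at a vertex are linearly independent. Let $j$ be the largest index with $a_j\neq 0$. Then $|a_j|(K+1)^{j-1}\geq (K+1)^{j-1}$, whereas
\[
\Big|\sum_{i<j}a_i(K+1)^{i-1}\Big|\leq K\sum_{i<j}(K+1)^{i-1}=(K+1)^{j-1}-1 .
\]
Hence $\alpha(e)(\xi)$ is a nonzero integer, so $|\alpha(e)(\xi)|\geq 1$. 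Conversely,
\[
|\alpha(e)(\xi)|\leq K\sum_{i=1}^k (K+1)^{i-1}\leq k\,K(K+1)^{k-1}=:B .
\]
I expect this base-$(K+1)$ separation argument to be the only genuinely non-routine point; everything else is direct estimation.

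It then remains to estimate each vertex term. Every denominator $\prod_{e\in E_v}|\alpha(e)(\xi)|$ is at least $1$. The elementary symmetric polynomial $\sigma_i$ in $n$ numbers of absolute value at most $B$ has absolute value at most ${n\choose i}B^i$, since it has ${n\choose i}$ monomials of degree $i$. Since $\sum_j i_j=n$, each numerator is bounded by $B^n\prod_j{n\choose i_j}$. Summing over the $\chi$ vertices and using the triangle inequality gives
\[
|c^T_{i_1,\ldots,i_m}[\Gamma]|\leq \chi\,(kK(K+1)^{k-1})^n\prod_{j=1}^m{n\choose i_j},
\]
as claimed.
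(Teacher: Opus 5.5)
Your proposal is correct and follows essentially the same route as the paper: evaluate the constant localization sum at $X=(1,K+1,\ldots,(K+1)^{k-1})$, use the base-$(K+1)$ argument to get $|\alpha(e)(X)|\geq 1$ so each denominator is at least $1$, and bound the numerators termwise by $\prod_j {n\choose i_j}(kK(K+1)^{k-1})^{i_j}$. The only difference is cosmetic: you bound $|\alpha(e)(X)|$ via $K\sum_{i=1}^k(K+1)^{i-1}=(K+1)^k-1$, which is slightly sharper, whereas the paper uses $k\|\alpha(e)\|_\infty\|X\|_\infty$; both then give the stated bound.
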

\begin{proof}
Let $v_1,\ldots, v_\chi$ be the vertices and $\alpha_{i,1},\ldots,\alpha_{i,n}$ be the weights at $v_i$. Recall that for any $l$ we have $c_l^T(v_i)=\sigma_{l}(\alpha_{i,1},\ldots,\alpha_{i,n})$ where $\sigma_l$ is the $l$-th elementary symmetric polynomial. Hence
\[c^T_{i_1,\ldots,i_m}[\Gamma] = \sum_{i=1}^{\chi} \frac{\prod_{j=1}^m \sigma_{i_j}(\alpha_{i,1},\ldots,\alpha_{i,n})}{\alpha_{i,1}\cdot\ldots\cdot\alpha_{i,n}},\]
which is a priori an element of the field of fractions of $H(BT;\ZZ)$, but it simplifies to an integer by Lemma \ref{ABBV integers}. For some $X\in \mathfrak{t}\cong \RR^k$ we denote by $\alpha_{i,j}(X)\in \RR$ the evaluation of $\alpha_{i,j}\in H^2(BT;\ZZ)\cong \ZZ_\mft^*$ extended to an element of $\mft^*$. When viewing $\alpha_{i,j}\in \ZZ^k$ this means $\alpha_{i,j}(X)=\langle \alpha_{i,j},X\rangle$ and thus $|\alpha_{i,j}(X)|\leq k \|\alpha_{i,j}\|_\infty \cdot \|X\|_\infty\leq kK\|X\|_\infty$. Consequently
\[|\sigma_{i_j}(\alpha_{i,1},\ldots,\alpha_{i,n})(X)|\leq {n \choose i_j}(kK\|X\|_\infty)^{i_j}.
\]

Assume for the moment that we have $X\in \RR^k$ such that $|\alpha_{i,j}(X)|\geq 1$ for all $i,j$. Seeing $c^T_{i_1,\ldots,i_m}[\Gamma]$ as a constant rational function on the set of such $X$'s, we obtain
\begin{align*}
|c^T_{i_1,\ldots,i_m}[\Gamma]| &= \left|c^T_{i_1,\ldots,i_m}[\Gamma] (X)\right|\\
&\leq \sum_{i=1}^\chi \frac{\prod_{j=1}^m |\sigma_{i_j}(\alpha_{i,1},\ldots,\alpha_{i,n})(X)|}{|\alpha_{i,1}(X)\cdot\ldots\cdot\alpha_{i,n}(X)|}\\
&\leq \sum_{i=1}^\chi \prod_{j=1}^m |\sigma_{i_j}(\alpha_{i,1},\ldots,\alpha_{i,n})(X)|\\
&\leq \chi\cdot (k\,K\|X\|_\infty)^n\prod_{j=1}^m {n\choose i_j}.
\end{align*}
To finish the proof it remains to give a choice for $X$ which is not orthogonal to any of the weights. We set
\[X= (1,(K+1),(K+1)^2,\ldots, (K+1)^{k-1}).\]
Indeed, consider any $\alpha =(\alpha_1,\ldots,\alpha_k)\in \ZZ^k$ with $\|\alpha_i\|_{\infty}\leq K$ and let $j$ be the maximal index, such that $\alpha_j\neq 0$. Then
\begin{align*}
|\langle\alpha,X\rangle| &=\left|\sum_{i=0}^j \alpha_i (K+1)^{i-1}\right|\\ &\geq |\alpha_j| (K+1)^{j-1}-\sum_{i=1}^{j-1} |\alpha_i|(K+1)^{i-1}\\ &\geq (K+1)^{j-1} - \sum^{j-1}_{i=1} K(K+1)^{i-1} =1.
\end{align*}
We conclude by observing that as $K> 0$ we have $\|X\|_\infty = (K+1)^{k-1}$.
\end{proof}

\begin{rem}
Geometrically, for rational $K$ the choice of $X$ in the above proof corresponds to that of a generic circle $S^1\subset T^k$ such that one still has isolated fixed points after restricting the action to $S^1$.
 For any individual GKM manifold one usually finds such a generic circle, which is more optimal than the one in Proposition \ref{prop: chernbound}. However in order to obtain the above statement, which does not depend on the individual weights, it is necessary to find a circle which is simultaneously generic for all GKM manifolds with ``small'' weights.
\end{rem}

\begin{cor}\label{thm: quantitative bound}
Let $(\Gamma,\alpha,\nabla)$ be an $n$-valent GKM graph with $\chi$ vertices and fixed connection. Let $\beta\in \NN$ such that the line Chern numbers satisfy $|c_e(f)|\leq \beta$.
Then for each decomposition $n=i_1+\ldots+i_m$, the corresponding combinatorial Chern number satisfies
\begin{equation}\label{eq def L}
|c_{i_1,\ldots,i_m}[\Gamma]|\leq L(\beta,n,\chi):= \chi\cdot (n\,K (K+1)^{n-1})^n\prod_{j=1}^m {n\choose i_j}
\end{equation}
where $K$ is the constant $K(\beta,n,\chi)$ from Theorem \ref{thm: Box} equation \eqref{def K}
\end{cor}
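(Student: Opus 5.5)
The plan is to combine Theorem \ref{thm: Box} with Proposition \ref{prop: chernbound}, using the fact that combinatorial Chern numbers (those with $\sum_j i_j=n$) are invariant under passing to a maximal extension.

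First, by Theorem \ref{thm: Box} applied to $(\Gamma,\alpha,\nabla)$ with the given bound $\beta$, we choose a maximal extension $\tilde\alpha\colon E\to\ZZ^m$ with $\|\tilde\alpha(e)\|_\infty\leq K=K(\beta,n,\chi)$ for all $e$. By Proposition \ref{prop: maxextension}, $(\Gamma,\tilde\alpha)$ is again an abstract $n$-valent GKM graph on the same underlying graph with $\chi$ vertices. Moreover, there is a linear map $\Phi\colon\ZZ^m\to\ZZ^k$ with $\alpha=\Phi\circ\tilde\alpha$. Lemma \ref{lem: restrict chern numbers}, applied with $\varphi=\mathrm{id}_\Gamma$, then shows that the multiplicative extension of $\Phi$ sends $c^T_{i_1,\ldots,i_m}[(\Gamma,\tilde\alpha)]$ to $c^T_{i_1,\ldots,i_m}[(\Gamma,\alpha)]$. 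In degree zero the multiplicative extension is the identity on $H^0(BT)=\ZZ$, so the two combinatorial Chern numbers coincide as integers.

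Second, we apply Proposition \ref{prop: chernbound} to $(\Gamma,\tilde\alpha)$ with rank $m$ in place of $k$. This gives
\[|c_{i_1,\ldots,i_m}[\Gamma]|\leq \chi\,(m\,K(K+1)^{m-1})^n\prod_j{n\choose i_j}.\]
It remains to replace $m$ by $n$. By Remark \ref{rem: combmax}(3) (Kuroki's lemma), $m=\rk L(\Gamma,\nabla,c_*)\leq n$. Since $K>0$, the expression $mK(K+1)^{m-1}$ is monotone increasing in $m$, so it is bounded by $nK(K+1)^{n-1}$, which yields \eqref{eq def L}.

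The only step needing care is the invariance of the integer Chern number under $\Phi$. One must check that the degree-zero quantity $(c^T_{i_1}\cdots c^T_{i_m})_*$ is a constant rational function that $\Phi$ leaves unchanged. This is immediate, since $\Phi$ is a ring map fixing the constants and the localization sum \eqref{localization} is computed the same way in both graphs. There is one subtlety: $\Phi$ may not be injective, so denominators could vanish after applying it. We avoid this by working directly with the identity $\alpha=\Phi\circ\tilde\alpha$: both sums are evaluated on the same graph, and $\alpha$ itself has nonvanishing weights, so each term is well defined. Equivalently, one can evaluate both sums at a generic $X$ and at $\Phi^*X$, respectively.
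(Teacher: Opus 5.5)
Your proposal is correct and follows the paper's proof. You take a maximal extension with labels bounded by $K$ via Theorem \ref{thm: Box}, apply Proposition \ref{prop: chernbound} using that its rank is at most $n$ (Remark \ref{rem: combmax}), and transfer the integer Chern number back to $(\Gamma,\alpha)$ via Lemma \ref{lem: restrict chern numbers}. Your remarks on monotonicity in the rank and on denominators staying nonzero (since $\Phi\circ\tilde\alpha=\alpha$) only make explicit what the paper leaves implicit; just avoid using $m$ both for the rank and for the number of parts of the decomposition.
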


\begin{proof}

Theorem \ref{thm: Box} implies that there is a maximal extension $(\Gamma,\tilde{\alpha})$, $\tilde{\alpha}\colon E\rightarrow \ZZ^k$, of the GKM graph such that the weights $\tilde{\alpha}(e)$ satisfy $\|\tilde{\alpha}(e)\|_\infty\leq K=K(\beta,n,\chi)$, where the latter is defined in equation \eqref{def K}. Proposition \ref{prop: chernbound}, together with the fact that $k\leq n$ (see Remark \ref{rem: combmax}), implies
\[c^{\widetilde{T}}_{i_1,\ldots,i_m}[\Gamma]\leq\chi\cdot (n\,K (K+1)^{n-1})^n\prod_{j=1}^m {n\choose i_j}\]
where the left hand side is the combinatorial Chern number of the maximal extension $(\Gamma,\tilde{\alpha})$ and $\widetilde{T}$ an abstract torus such that 
its dual lattice is $\ZZ^k$. The GKM graph $(\Gamma,\alpha)$ of the given
$T$-action on $(M,\omega)$ is a restriction of $(\Gamma,\tilde{\alpha})$ in the sense of
Lemma \ref{lem: restrict chern numbers}. Thus by the lemma,
$c^{\widetilde{T}}_{i_1,\ldots,i_m}[\Gamma]$ restricts to the respective equivariant Chern
number of $(\Gamma,\alpha)$.

\end{proof}

\begin{cor}\label{cor: quantitativeboundgeometric}
Let $(M,\omega)$ be a compact positive monotone Hamiltonian GKM$_3$ space of dimension $2n$ and Euler Characteristic $\chi$. Then for $K(n,\chi)$ as in Remark \ref{rem: Knchi}, we have
\begin{equation}\label{eq def L geometric}
|c_{i_1,\ldots,i_m}[M]|\leq L(n,\chi):=\chi\cdot (n\,K (K+1)^{n-1})^n\prod_{j=1}^m {n\choose i_j}.
\end{equation}
In particular, there exists a polynomial $P(n,\chi)$ such that for any $(M,\omega)$ as above all of its Chern numbers satisfy \[|c^T_{i_1,\ldots,i_m}[M]| \leq P(n,\chi)^{n^3\chi}.\]
\end{cor}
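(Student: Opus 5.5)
The first inequality follows by chaining earlier results. Let $(\Gamma,\alpha)$ be the GKM graph of $(M,\omega,\mu)$, with its unique connection $\nabla$ (Remark \ref{rem: gkm3connection}). The graph $\Gamma$ is $n$-valent with $\chi$ vertices. By Corollary \ref{global bound}, all line Chern numbers satisfy $|c_e(f)|\leq \beta(n,\chi)$, with $\beta(n,\chi)$ as in Remark \ref{rem: Knchi}. The case $e=f$, where $c_e(e)=2$, is harmless because $\beta\geq 2$. We then apply Corollary \ref{thm: quantitative bound} with $\beta=\beta(n,\chi)$, which gives $K=K(\beta(n,\chi),n,\chi)=K(n,\chi)$. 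By item (c) after \eqref{comb chern numbers}, the combinatorial Chern numbers of $(\Gamma,\alpha)$ coincide with the Chern numbers $c_{i_1,\ldots,i_m}[M]$; in top degree the equivariant and non-equivariant numbers agree in $H^0(BT)=\ZZ$. This yields \eqref{eq def L geometric}. The cases $n\leq 2$ (and $n=3$ with $\beta=24\chi$) fit the same pattern, using Remark \ref{rem open close toric}.

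The second inequality is a growth estimate on $L(n,\chi)$.

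First, as in Remark \ref{rem better bound}, for $n\geq 4$ we have $\beta^2+2<\chi^2n^8/4$. The case $n=3$ is similar with $\beta=24\chi$, and smaller $n$ are absorbed into the polynomial. Hence
\[
K^2\leq \frac{(n+3)n\chi}{8}\Big(\frac{\chi^2n^8}{4}\Big)^{n\chi}\leq Q(n,\chi)^{n\chi}
\]
for a polynomial $Q$, for instance $Q=(n+3)n\chi\cdot\chi^2n^8$; this uses $n\chi\geq1$. So $K\leq Q^{n\chi/2}$ and $K+1\leq 2K\leq Q_1^{n\chi}$ for a suitable polynomial $Q_1$.

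Next, $(nK(K+1)^{n-1})^n\leq n^n (K+1)^{n^2}\leq n^n Q_1^{n^3\chi}$. The binomial factors are bounded by $\prod_j\binom{n}{i_j}\leq (2^n)^m\leq 2^{n^2}$, since $m\leq n$.

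Finally, the remaining factors $\chi$, $n^n$ and $2^{n^2}$ are each at most $(2n\chi)^{n^3\chi}$. Setting $P:=2n\chi\,Q_1$ gives $L\leq P(n,\chi)^{n^3\chi}$, up to enlarging the polynomial by a constant factor to absorb the finitely many product factors. We may take the exponent as $n^3\chi$ plus small terms, and then fold everything into $P^{n^3\chi}$ by enlarging $P$ slightly, e.g. multiplying by $4$.

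The only real care needed is the bookkeeping of exponents: checking that every factor is of the form (polynomial)$^{\leq n^3\chi}$, and handling the small-$n$ cases uniformly. None of this is conceptually hard. The key input is that $K$ is at most exponential in $n\chi$, so $(K+1)^{n^2}$ is at most exponential in $n^3\chi$.
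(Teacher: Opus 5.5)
Your proposal is correct and follows the paper's proof. You get the first inequality by feeding the bound $\beta(n,\chi)$ from Corollary \ref{global bound} and Remark \ref{rem: Knchi} into Corollary \ref{thm: quantitative bound}, and the second by bounding $K$ by a polynomial to the power $n\chi$, so that $(K+1)^{n^2}$ and the remaining factors are at most a polynomial to the power $n^3\chi$. Your check that $c_e(e)=2\leq\beta$ and your exponent bookkeeping only spell out what the paper leaves implicit; in fact $\chi\, n^n 2^{n^2}\leq (2n\chi)^{n^3\chi}$, so $P=2n\chi\,Q_1$ already works with no further enlarging.
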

\begin{proof}
The first inequality follows directly from Corollary \ref{thm: quantitative bound} and Remark \ref{rem: Knchi} and it remains only to justify the existence of $P(n,\chi)$. Taking into account the explicit definition of $K(\beta,n,\chi)$ from Equation \eqref{def K} and $\beta(n,\chi)$ from Equation \eqref{def beta n} one sees that there is a polynomial $P_1(n,\chi)$ such that $K\leq P_1(\chi,n)^{n\chi}$. Hence for some other polynomial $P_2(n,\chi)$ we have
\[\chi\cdot (n\,K (K+1)^{n-1})^n\leq P_2(n,\chi)^{n^3\chi}.\]
Clearly the product over the binomial coefficients has an upper bound of the same form (for instance it is bounded by $n^{n^2}$) and consequently so does $c_{i_1,\ldots,i_m}[M]$.
\end{proof}

In the special case of $c_1^n[M]$ we obtain

\begin{cor}\label{bound on the volume}
With the notation in Corollary \ref{cor: quantitativeboundgeometric} one has
\[ \int_M \omega^n=c_1^n[M]\leq \chi (n^2\,K (K+1)^{n-1})^n.\]
\end{cor}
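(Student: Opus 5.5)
The plan is to read off the inequality from Corollary \ref{cor: quantitativeboundgeometric} by specializing to a single decomposition of $n$, together with the monotonicity condition $c_1=[\omega]$.

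First, I identify the left-hand side with a Chern number. Since $(M,\omega)$ is positive monotone, $[\omega]=c_1$ in $H^2(M;\RR)$. Hence $[\omega]^n=c_1^n$, and evaluating on the fundamental class gives
\[\int_M\omega^n=[\omega]^n[M]=c_1^n[M].\]
This is positive because $\omega^n$ is a volume form, so it suffices to bound the absolute value $|c_1^n[M]|$.

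Next, I take the decomposition $n=i_1+\ldots+i_m$ with $m=n$ and $i_1=\ldots=i_n=1$. Then $c_{1,\ldots,1}[M]=c_1^n[M]$, and each binomial coefficient in \eqref{eq def L geometric} equals ${n\choose 1}=n$, so $\prod_{j=1}^n{n\choose i_j}=n^n$. Substituting into \eqref{eq def L geometric}, with $K=K(n,\chi)$ as in Remark \ref{rem: Knchi}, gives
\[c_1^n[M]\leq \chi\cdot\big(n\,K(K+1)^{n-1}\big)^n\, n^n=\chi\big(n^2K(K+1)^{n-1}\big)^n.\]
This is exactly the claimed bound.

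There is no real obstacle. The only points to check are the bookkeeping of the factor $n^n$ and that the normalization $c_1=[\omega]$ from the definition of positive monotone is the one used here, so that no extra constant such as $n!$ enters.
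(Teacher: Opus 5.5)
Your proposal is correct and matches the paper, which obtains this corollary directly as the special case $i_1=\cdots=i_n=1$ of Corollary \ref{cor: quantitativeboundgeometric}. The factor $\prod_{j=1}^n {n\choose 1}=n^n$ is absorbed into the base to give $n^2$, and $\int_M\omega^n=c_1^n[M]$ follows from $c_1=[\omega]$.
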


\section{Applications to smooth reflexive polytopes}\label{sec application reflexive}
In this section we analyze some consequences of our theorems for reflexive polytopes of dimension $n\geq 3$ (as in dimension
$n=2$ there are just five well-known reflexive polygons, up to lattice transformations).

Suppose that $(M,\omega,\mu)$ is a compact positive monotone Hamiltonian GKM$_n$ space of dimension $2n$, therefore
a symplectic toric manifold of dimension $2n$.
Then the image of the moment map $\mu(M)=:\Delta$ is a \textit{smooth reflexive polytope} of dimension $n$ and the GKM graph coincides
with the set of vertices and edges of $\Delta$ with the following labels: If $e$ is an edge going from $p$ to $q$, where
$p,q\in \RR^n$ are vertices of $\Delta$, then $\alpha(e)$ is the primitive vector in $\ZZ_{\mathfrak{t}}^*\simeq \ZZ^n$ in the direction of $e$
(see \cite{MR3695881}). Moreover, for a smooth polytope of dimension $n$, the
corresponding GKM graph has a standard, unique connection $\Delta$: For every pair of
distinct edges $e,f$ adjacent to the vertex $p$, consider the two
dimensional face $\mathcal{F}$ of $\Delta$ containing $e$ and $f$. If $e$ joins the
vertices $p$ and $q$, let $f'$ be the unique edge belonging to $\mathcal{F}$ and
adjacent to $q$;  we define $\nabla_e(f)$ to be $f'$. By
\cite[Proposition 2.4]{MR3695881}, this connection is compatible with the axial function
defined above (here the line Chern number $c_e(f)$ is exactly the integer $a_i$ in
\cite[Proposition 2.4]{MR3695881}). 
We refer to the line Chern numbers of this (unique) connection as the \emph{line Chern numbers of the smooth polytope} $\Delta$. 

Conversely, a smooth reflexive polytope $\Delta$ of dimension $n$ is always the moment map image of a compact positive monotone
symplectic toric manifold $(M_\Delta,\omega,\mu)$ of dimension $2n$ (see \cite[Proposition 3.10]{MR3695881}) and the latter is a Hamiltonian GKM$_n$ space.

With these ingredients at hand, we are able to give the following

\begin{proof}[Proof of Corollary \ref{cor reflexive1}]\label{proof of cor intro}
Observe that a GKM$_n$ action is combinatorially maximal (see Remark \ref{rem: combmax} (2)).
Moreover, the number of vertices $|V|$ of $\Delta$ corresponds exactly to the number of fixed points of the action which, in turn, is the
Euler characteristic of $M_\Delta$.

Therefore the inequality 
\begin{equation}\label{upper bound d refl}
d_\Delta \leq P(n,|V|)\cdot\left(\frac{n^4|V|}{2}\right)^{n|V|}
\end{equation}

 comes from Corollary \ref{cor: box} and Remark \ref{rem better bound}.
\end{proof}

\begin{rem}\label{comp Lagarias Ziegler}
For (not necessarily smooth) reflexive polytopes $\Delta$, a bound on their diameter is already known and is due to Lagarias
and Ziegler \cite{MR1138580}. Indeed, by combining Theorem 1 and 2 of their paper,  one can easily see that
 \[d_{\Delta}\leq  n\cdot n!\cdot 14^{n2^{n+1}}.\]
 Our bound \eqref{upper bound d refl} is different, as it is also a function of the number of vertices $|V|$ of $\Delta$.
\end{rem}

We recall that, for a smooth polytope $\Delta$, its $h$-vector $\mathbf{h}=(h_0,h_1,\ldots,h_n)$ corresponds to the vector $\mathbf{b}$ of even Betti numbers of the corresponding (symplectic toric) manifold (see for instance \cite[Lemma 3.8]{MR3695881}). We define
\begin{equation}\label{definition Cnh}
C(n,\mathbf{h}):=\sum_{j=0}^n h_{j}\Big[ 6j(j-1)+\frac{5n-3n^2}{2}\Big]\,.
\end{equation}
Given the discussion above, the following is an immediate consequence of Theorem \ref{thm: mainthm}.
\begin{cor}\label{cor reflexive}
Let $n\in \NN$, $n\geq 3$, and $\mathbf{h}\in \NN^{n+1}$. 
Then for each $n$ and $\mathbf{h}$, there are only finitely many possible values for the line Chern numbers of a smooth
reflexive polytope of dimension $n$ and with $h$-vector $\mathbf{h}=(h_0,\ldots,h_n)$. 

More precisely, let $C(n,\mathbf{h})$ be the constant defined in \eqref{definition Cnh}. 
Then the following bounds hold for any line Chern number $c_e(f)$ of pairs of distinct adjacent edges $e,f$:
\begin{equation}\label{bound cef reflexive}
(3-n)C(n,\mathbf{h})-2\leq c_e(f)< C(n,\mathbf{h})
\end{equation}
\end{cor}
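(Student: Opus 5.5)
The plan is to reduce Corollary \ref{cor reflexive} directly to Theorem \ref{thm: mainthm}, using the dictionary between smooth reflexive polytopes and positive monotone symplectic toric manifolds recalled at the start of this section.

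\textbf{Step 1: realize the polytope.} Given a smooth reflexive polytope $\Delta$ of dimension $n\geq 3$, I would invoke \cite[Proposition 3.10]{MR3695881}. This produces a compact positive monotone symplectic toric manifold $(M_\Delta,\omega,\mu)$ of dimension $2n$ with $\mu(M_\Delta)=\Delta$ and $c_1=[\omega]$. After translating $\mu$, the weight sum formula \eqref{weight sum formula} holds.

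\textbf{Step 2: check the GKM$_3$ hypothesis.} The action is toric, hence GKM$_n$. Since $n\geq 3$, Lemma \ref{dk equivalent d'k} gives the GKM$_3$ condition, so $(M_\Delta,\omega,\mu)$ is a compact positive monotone Hamiltonian GKM$_3$ space.

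\textbf{Step 3: identify the two sets of line Chern numbers.} This is the only point that needs some care.
\begin{itemize}
\item The GKM graph of $M_\Delta$ is the edge graph of $\Delta$, with $\alpha(e)$ the primitive edge direction.
\item By Remark \ref{rem: gkm3connection}, the GKM$_3$ condition forces the connection to be unique.
\item The face connection described above, which uses the two-dimensional faces of $\Delta$, is compatible with $\alpha$ by \cite[Proposition 2.4]{MR3695881}.
\end{itemize}
So the face connection must coincide with the unique connection of the GKM graph. Consequently the line Chern numbers of $\Delta$ are exactly the line Chern numbers of $M_\Delta$ appearing in Theorem \ref{thm: mainthm}.

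\textbf{Step 4: match the constants.} By \cite[Lemma 3.8]{MR3695881}, the $h$-vector of $\Delta$ equals the vector of even Betti numbers, $\mathbf{h}=\mathbf{b}$. Hence $C(n,\mathbf{h})=C(n,\mathbf{b})$ by \eqref{definition Cnb} and \eqref{definition Cnh}.

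\textbf{Step 5: conclude.} Inequality \eqref{bound chern line numbers} now yields \eqref{bound cef reflexive} directly. Finiteness of the possible values follows because they are integers lying in a bounded interval that depends only on $n$ and $\mathbf{h}$.
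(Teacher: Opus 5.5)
Your proposal is correct and follows the paper's approach. The paper likewise treats the corollary as an immediate consequence of Theorem \ref{thm: mainthm}, applied to the positive monotone toric manifold $M_\Delta$, which is Hamiltonian GKM$_n$ and hence GKM$_3$. Its edge graph with primitive labels and face connection gives the line Chern numbers of $\Delta$, and $\mathbf{h}=\mathbf{b}$. Your appeal to Remark \ref{rem: gkm3connection}, showing that the face connection must be the unique GKM$_3$ connection, spells out a step the paper leaves implicit.
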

\begin{rem}
The inequalities in \eqref{bound cef reflexive} can be stated equivalently in terms of the $f$-vector $\mathbf{f}=(F_0,\ldots,F_n)$, where
$F_i$ denotes the number of faces of dimension $i$. Indeed $\mathbf{h}$ can be expressed in terms of $\mathbf{f}$ and vice versa, and
using this correspondence it is easy to prove that $$C(n,\mathbf{h})=12F_2+(5-3n)F_1\,,$$ and therefore an equivalent formulation of \eqref{bound cef reflexive} is
\begin{equation}\label{bound cef reflexive f}
(3-n)[12F_2+(5-3n)F_1]-2\leq c_e(f)< 12F_2+(5-3n)F_1\,.
\end{equation}
\end{rem}

Corollary \ref{global bound} has the following immediate consequence

\begin{cor}\label{global bound reflexive}
Let $\Delta$ be a smooth reflexive polytope of dimension $n\geq 3$ with $|V|$ vertices. 
Then the following bounds hold for the line Chern numbers of any pair of distinct adjacent edges $e,f$ of $\Delta$:
\begin{equation}
\label{eq:global bound reflexive} (3-n)|V|\cdot \frac{n(n+1)^2}{2}-2\leq c_e(f)< |V|\cdot\frac{n(n+1)^2}{2}
\end{equation}

\end{cor}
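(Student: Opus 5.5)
The plan is to reduce Corollary \ref{global bound reflexive} to Corollary \ref{global bound} through the dictionary between smooth reflexive polytopes and positive monotone symplectic toric manifolds recalled at the beginning of this section. Let $\Delta$ be a smooth reflexive polytope of dimension $n\geq 3$. By \cite[Proposition 3.10]{MR3695881}, $\Delta$ is the moment map image of a compact positive monotone symplectic toric manifold $(M_\Delta,\omega,\mu)$ of dimension $2n$ with $c_1=[\omega]$. After translating $\mu$ if necessary, we may assume the weight sum formula \eqref{weight sum formula} holds. Since the action is toric, every subset of at most $n$ weights at a fixed point is linearly independent. By Lemma \ref{dk equivalent d'k}, the action is therefore GKM$_n$, and in particular GKM$_3$ because $n\geq 3$.

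Next I identify the combinatorial data. The fixed points of $M_\Delta$ correspond bijectively to the vertices of $\Delta$, so the Euler characteristic $\chi$ of $M_\Delta$ equals $|V|$. The GKM graph of $M_\Delta$ is the $1$-skeleton of $\Delta$, where each edge carries the primitive vector in its direction. Because the action is GKM$_3$, the GKM connection is unique by Remark \ref{rem: gkm3connection}. It therefore coincides with the standard face connection of $\Delta$ described above, which is compatible with the axial function by \cite[Proposition 2.4]{MR3695881}. Hence the line Chern numbers of $\Delta$ are exactly the line Chern numbers of the GKM graph of $M_\Delta$.

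Finally, apply Corollary \ref{global bound} to $(M_\Delta,\omega,\mu)$ with $\chi=|V|$. This gives \eqref{eq:global bound reflexive} for every pair of distinct adjacent edges $e,f$.

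The only point needing care is the identification of the two connections. Uniqueness of compatible connections under GKM$_3$ settles it, so no real obstacle is expected. As an alternative route, one could use Corollary \ref{cor reflexive} together with the estimate $C(n,\mathbf{h})\leq |V|\frac{n(n+1)^2}{2}$. This estimate follows as in \eqref{bound C and chi}, since each $h_j\leq \sum_i h_i=|V|$. Note that the lower bound $(3-n)C-2$ decreases as $C$ grows, because $3-n\leq 0$, so replacing $C$ by the larger quantity preserves both inequalities.
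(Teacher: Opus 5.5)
Your main argument is correct and is exactly the paper's: the statement is the immediate specialization of Corollary~\ref{global bound} to the toric manifold $M_\Delta$, with $\chi=|V|$ and the face connection of $\Delta$ identified with the unique GKM$_3$ connection. Your alternative route has a gap, however: $h_j\le |V|$ alone does not give $C(n,\mathbf{h})\le |V|\cdot\frac{n(n+1)^2}{2}$, because the coefficients $6j(j-1)+\frac{5n-3n^2}{2}$ are negative for small $j$ (e.g.\ $j=0,1$); you must first symmetrize using $h_j=h_{n-j}$, after which every coefficient is at most $\frac{3n^2-n}{2}<\frac{n(n+1)^2}{2}$ and $h_j\ge 0$ finishes the estimate (the same remark applies to the justification of \eqref{bound C and chi}, via $b_{2j}=b_{2n-2j}$).
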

\begin{rem}\label{reflexive dimension}
A celebrated result of Haase and Melnikov \cite{HaaseMelnikov2006} asserts that every lattice polytope $P$
 is lattice isomorphic to a face of some reflexive polytope $\Delta$ of sufficiently high dimension; the smallest such dimension
 is called the \emph{reflexive dimension} of $P$. 
 If one asks whether a lattice polytope $P$ can be the face of a \emph{smooth} reflexive polytope $\Delta$ of a certain dimension, then
 $P$ needs to be necessarily smooth (see for instance \cite[Lemma 2.8]{MR3695881}). Then Corollaries \ref{cor reflexive} and \ref{global bound reflexive} 
 would give restrictions on the line Chern numbers of $P$ (which are the same as those of $\Delta$ of the corresponding edges)
  in terms of the global properties of $\Delta$, namely in terms of its $\mathbf{h}$- or $\mathbf{b}$-vector, 
 or simply in terms of the number of vertices of $\Delta$. We wonder whether this observation could be used to 
 improve existing estimates on the reflexive dimension of a smooth polytope.  
 \end{rem}

\bibliography{estimates}
\bibliographystyle{amsplain}

\end{document}